\documentclass{amsart}


\usepackage{amssymb}
\usepackage{graphicx}
\usepackage{MnSymbol}

\usepackage{amsthm}

\title[Number of homomorphisms]{The number of homomorphisms from the Hawaiian earring group}
\author{Samuel M. Corson}

\bibliographystyle{te}

\theoremstyle{definition}\newtheorem{theorem}{Theorem}
\theoremstyle{definition}\newtheorem*{A}{Theorem \ref{manymanyhom}}
\theoremstyle{definition}\newtheorem*{B}{Theorem \ref{subgroups}}

\theoremstyle{definition}\newtheorem{bigtheorem}{Theorem}

\numberwithin{theorem}{section}
\theoremstyle{definition}\newtheorem{corollary}[theorem]{Corollary}
\theoremstyle{definition}\newtheorem{proposition}[theorem]{Proposition}
\theoremstyle{definition}\newtheorem{definition}[theorem]{Definition}
\theoremstyle{definition}\newtheorem{question}[theorem]{Question}
\theoremstyle{definition}\newtheorem{example}{Example}
\theoremstyle{definition}
\theoremstyle{definition}
\theoremstyle{definition}\newtheorem{lemma}[theorem]{Lemma}
\theoremstyle{definition}
\theoremstyle{definition}
\theoremstyle{definition}
\theoremstyle{definition}\newtheorem{obs}[theorem]{Observation}

\newcommand{\Hom}{\operatorname{Hom}}
\newcommand{\Red}{\operatorname{Red}}
\newcommand{\slk}{\operatorname{slk}}
\newcommand{\proj}{\operatorname{proj}}

\newcommand{\W}{\mathcal{W}}
\newcommand{\HEG}{\operatorname{HEG}}
\newcommand{\HAG}{\operatorname{HAG}}
\newcommand{\Aut}{\operatorname{Aut}}
\newcommand{\HAGim}{\operatorname{HAGim}}

\begin{document}

\address{Ikerbasque- Basque Foundation for Science and Matematika Saila, UPV/EHU, Sarriena S/N, 48940, Leioa - Bizkaia, Spain}

\email{sammyc973@gmail.com}
\keywords{Hawaiian earring, harmonic archipelago, wild space, slender}
\subjclass[2010]{Primary 55Q20, 20E06; Secondary 57M30}

\begin{abstract}  We show a dichotomy for groups of cardinality less than continuum.  The number of homomorphisms from the Hawaiian earring group to such a group $G$ is either the cardinality of $G$ in case $G$ is noncommutatively slender, or the number is $2^{2^{\aleph_0}}$ in case $G$ is not noncommutatively slender.  An example of a noncommutatively slender group with nontrivial divisible element is exhibited.
\end{abstract}

\maketitle

\begin{section}{Introduction}  The fundamental group of the Hawaiian earring, which we call the \emph{Hawaiian earring group} and denote $\HEG$, has become a subject of expanding use and interest in topology and group theory (see \cite{MM}, \cite{Sm}, \cite{EK}, \cite{CC1}, \cite{CC2}, \cite{FZ}, \cite{ADTW}).  Though not itself free, the structure of $\HEG$ has some analogies to that of free groups.  The fundamental group of the infinitary torus $T^{\infty}$, which is isomorphic to the product $\prod_{\omega}\mathbb{Z}$, has comparable similarities to free abelian groups while not being itself free abelian.  The group $\HEG$ is both residually and locally free, and the group $\prod_{\omega}\mathbb{Z}$ is residually and locally free abelian.  There is an easy-to-define continuous map from the Hawaiian earring to $T^{\infty}$ which induces a surjection from $\HEG$ to $\prod_{\omega}\mathbb{Z}$.  For these and other reasons, one can imagine $\HEG$ to be the unabelian version of $\prod_{\omega}\mathbb{Z}$.

These comparisons motivate an extension of certain abelian group notions to more general settings, where definitions involving $\prod_{\omega}\mathbb{Z}$ have $\HEG$ substituted.  For example, an abelian group $A$ is said to be \emph{slender} (see \cite{F}) if for every homomorphism $\phi: \prod_{\omega} \mathbb{Z} \rightarrow A$ there is a natural number $n\in \omega$ such that $\phi\circ p_n = \phi$, where $p_n: \prod_{\omega}\mathbb{Z} \rightarrow \prod_{i=0}^{n-1}\mathbb{Z}$ is the retraction to the subgroup for which only the first $n$ coordinates are possibly nonzero.  The Hawaiian earring group similarly has retraction maps $p_n$ to free subgroups $\HEG_n$, which maps correspond to the topological retractions given by mapping all points not on the largest $n$ circles to the wedge point.  Thus Eda defines a group $G$ to be \emph{noncommutatively slender} (or \emph{n-slender} for short) if for every homomorphism $\phi: \HEG \rightarrow G$ there exists $n\in \omega$ such that $\phi \circ p_n = \phi$ (see \cite{E1}).  The abelian n-slender groups are precisely the slender abelian groups \cite[Theorem 3.3]{E1}, so n-slenderness is conceptually an extension of slenderness.

Unsurprisingly, slenderness is better understood than n-slenderness.  The slender groups are completely characterized in terms of subgroups: an abelian group is slender if and only if it is torsion-free and does not contain a subgroup isomorphic to $\mathbb{Q}$, $\prod_{\omega}\mathbb{Z}$ or the $p$-adic completion of the integers for any prime $p$ (see \cite{Nu}).  By contrast, no such straightforward characterization for n-slender groups is known.  The problem of finding such a general characterization seems intractable.  For example, if $G$ is the fundamental group of a path connected, locally path connected first countable Hausdorff space which lacks a universal cover then one gets a homomorphism (induced by a continuous map) from $\HEG$ to $G$ witnessing that $G$ is not n-slender.  Such fundamental groups seem varied and complicated.

Even for countable groups it is unknown as of this writing whether a group is n-slender if and only if it is torsion-free and does not contain $\mathbb{Q}$, though certainly it is necessary that n-slender groups not contain torsion or $\mathbb{Q}$.  Many well known groups are known to be n-slender.  For example, free groups, free abelian groups, torsion-free word hyperbolic groups, torsion-free one-relator groups, and Thompson's group $F$ are n-slender (see \cite{Hi}, \cite{D}, \cite{Co1}, \cite{Co2}, \cite{CoCo}).  Despite our rather limited knowledge of small cardinality n-slender groups we get the following ($|X|$ is the cardinaliy of $X$ and $\Hom(H, G)$ denotes the set of group homomorphisms from $H$ to $G$):

\begin{bigtheorem}\label{manymanyhom}  If $G$ is a group with $|G| <2^{\aleph_0}$ then 

\begin{center}
$|\Hom(\HEG, G)| = \begin{cases}|G|$ if $G$ is n-slender$\\2^{2^{\aleph_0}}$ if $G$ is not n-slender$\end{cases}$

\end{center}

\end{bigtheorem}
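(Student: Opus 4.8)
The plan is to prove the dichotomy by matching an easy bound on one side against a harder bound on the other, so that two of the four required inequalities come essentially for free. For the lower bound $|\Hom(\HEG,G)|\geq |G|$ needed in the n-slender case, I would fix the retraction $s\colon\HEG\to\mathbb{Z}$ recording the winding number about the largest circle, so that $s(x_0)=1$; for each $g\in G$ the assignment $w\mapsto g^{s(w)}$ is a homomorphism sending $x_0$ to $g$, and distinct $g$ yield homomorphisms differing at $x_0$, giving at least $|G|$ of them (the trivial group being checked directly). For the upper bound $|\Hom(\HEG,G)|\leq 2^{2^{\aleph_0}}$ needed in the non-n-slender case, I use that $|\HEG|=2^{\aleph_0}$, so that since $|G|<2^{\aleph_0}$ even the set of \emph{all} functions $\HEG\to G$ has cardinality at most $(2^{\aleph_0})^{2^{\aleph_0}}=2^{2^{\aleph_0}}$.

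For the n-slender case it then remains to prove $|\Hom(\HEG,G)|\leq|G|$, and here n-slenderness is exactly the leverage: every $\phi$ satisfies $\phi=\phi\circ p_n$ for some $n$, hence $\phi=(\phi|_{\HEG_n})\circ p_n$ is determined by its restriction to the free group $\HEG_n$ of rank $n$. As a homomorphism out of a free group of rank $n$ is just an element of $G^n$, the homomorphisms killed by $p_n$ inject into $G^n$, and therefore
$$|\Hom(\HEG,G)|\leq\sum_{n\in\omega}|G|^{n}.$$
When $G$ is infinite this sum equals $\aleph_0\cdot|G|=|G|$; when $G$ is finite I invoke the fact (noted above) that n-slender groups are torsion-free, so the only finite n-slender group is trivial and the count is $1=|G|$. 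Combined with the previous paragraph this settles the first case.

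The real work is the lower bound $|\Hom(\HEG,G)|\geq 2^{2^{\aleph_0}}$ when $G$ is not n-slender, and this is where I expect the main difficulty. Fix a witness $\phi$ with $\phi\circ p_n\neq\phi$ for all $n$; unwinding the definition, this says that $\phi$ is nontrivial on the tail subgroup of words supported on circles of index $\geq n$, for every $n$. The guiding principle is that, because $\HEG$ is not free, a homomorphism is \emph{not} determined by its values on the generators $x_n$: two homomorphisms may agree on every $x_n$ yet disagree on infinite products such as $x_0x_1x_2\cdots$, and the failure of n-slenderness is precisely a manifestation of this extra freedom. One cannot hope to localize the witness to finitely supported words, as the rational example $\phi\colon\HEG\to\mathbb{Q}$ (factoring through $\HEG^{\mathrm{ab}}\otimes\mathbb{Q}$) shows that all $\phi(x_n)$ may vanish while $\phi$ is nonzero, so the construction must live genuinely at the level of infinite words.

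Accordingly I would use $\phi$ together with Eda's word calculus to produce a family $\{v_\alpha\}_{\alpha<2^{\aleph_0}}$ of infinite words that are \emph{$\phi$-independent}, in the sense that their images can be perturbed independently of one another while the images of the generators are held fixed, and then realize each of the $2^{(2^{\aleph_0})}=2^{2^{\aleph_0}}$ patterns of perturbation as an honest homomorphism $\HEG\to G$. The cleanest route to such a family is to pass to the sub-quotient that $\phi$ actually detects, one carrying a $2^{\aleph_0}$-dimensional ``linear'' part analogous to $(\prod_{\omega}\mathbb{Z})\otimes\mathbb{Q}$, whose space of functionals already has size $2^{2^{\aleph_0}}$, and to transport functionals back along $\phi$. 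The principal obstacle, and the step I expect to demand the most care, is the realization: showing that the independent modifications on infinite words genuinely respect all of the infinitary relations of $\HEG$ and hence define homomorphisms, and that enough of the resulting maps remain pairwise distinct. Together with the automatic upper bound, this yields the stated value in the non-n-slender case.
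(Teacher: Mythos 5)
Your handling of the n-slender case and of the crude upper bound $|\Hom(\HEG,G)|\le 2^{2^{\aleph_0}}$ is correct and essentially identical to the paper's (your winding-number map giving $|\Hom(\HEG,G)|\ge |G|$ is a harmless variant). The gap is in the one genuinely hard inequality, $|\Hom(\HEG,G)|\ge 2^{2^{\aleph_0}}$ for non-n-slender $G$, where what you offer is a plan rather than a proof, and the plan as stated cannot be completed. Concretely: in this direction you start from a bare witness $\phi$ with $\phi\circ p_n\ne\phi$ for all $n$ and propose to ``perturb'' it, and nowhere does the hypothesis $|G|<2^{\aleph_0}$ enter. That hypothesis is indispensable precisely here: $\HEG$ itself is not n-slender (the identity map is a witness), yet $|\Hom(\HEG,\HEG)|=2^{\aleph_0}$ by Eda's classification of its endomorphisms, as the paper notes in the introduction. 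So no argument that manufactures $2^{2^{\aleph_0}}$ homomorphisms from an arbitrary witness can be right. Moreover, the step you defer --- realizing the independent perturbations as honest homomorphisms --- is the entire content of the theorem, and the mechanism you gesture at (``transport functionals back along $\phi$'' from a sub-quotient with a linear part analogous to $(\prod_{\omega}\mathbb{Z})\otimes\mathbb{Q}$) does not make sense for the targets actually allowed: the homomorphisms must land in $G$, which may be, say, a nontrivial finite group carrying no such linear structure, and functionals on a quotient of $\HEG$ do not push forward along $\phi$ to maps into $G$ (the direction of the maps is wrong). In effect your plan asks $G$ to supply the $2^{2^{\aleph_0}}$-fold multiplicity, which a group of size $<2^{\aleph_0}$ cannot do by itself.

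For comparison, the paper puts the multiplicity entirely on the domain side, inside the harmonic archipelago group. The hypothesis $|G|<2^{\aleph_0}$ is used first: by Lemma \ref{stabilize} the images $\phi(\HEG^n)$ stabilize, and Conner's argument (Lemmas \ref{persistentimage} and \ref{hagfact2}) converts the witness into a single nontrivial homomorphism $\HAG\to G$. All remaining work (Proposition \ref{inforapenny}) takes place in $\HAG$, which, unlike $\HEG$, admits $2^{2^{\aleph_0}}$ endomorphisms: fixing an almost disjoint family $\Sigma$ of $2^{\aleph_0}$ infinite subsets of $\omega$, one builds for each $S\in\Sigma$ an infinite word $U_S$, and for each $\mathcal{S}\subseteq\Sigma$ a substitution endomorphism of $\HAG$ fixing $[[U_S]]$ when $S\notin\mathcal{S}$ and sending it to a fixed word with nontrivial image when $S\in\mathcal{S}$; composing these with the one map $\HAG\to G$ yields pairwise distinct homomorphisms, one per $\mathcal{S}$, which pull back along $\pi:\HEG\to\HAG$. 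The reason these independent modifications can be realized --- exactly the realization problem you flag but do not solve --- is that in $\HAG$ a word may be altered on any finite subword without changing its class, which is what makes the substitution on maximal occurrences of the $U_S$ well defined and multiplicative (Lemmas \ref{existenceofmaximal} through \ref{homomorphism}). Your hoped-for family of ``$\phi$-independent'' words would need an analogous mechanism, and inside $\HEG$ itself no such mechanism can exist, since it would produce too many endomorphisms of $\HEG$.
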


The difficulty in proving Theorem \ref{manymanyhom} lies in producing $2^{2^{\aleph_0}}$-many distinct homomorphisms from $\HEG$ to $G$ given the existence of a map witnessing that $G$ is not n-slender.  One cannot simply take such a map and precompose with sufficiently many endomorphisms of $\HEG$, for $\HEG$ has only $2^{\aleph_0}$-many endomorphisms (this follows immediately from \cite[Corollary 2.11]{E2}).  Previously it was known that for any nontrivial finite group $G$ there exist $2^{2^{\aleph_0}}$-many surjections from $\HEG$ to $G$ \cite{CS}. More generally it is now known that given a compact Hausdorff topological group $G$ there are $|G|^{2^{\aleph_0}}$ homomorphisms from $\HEG$ to $G$.  This latter fact is seen by combining the main result of \cite{T} with \cite[Theorem 0.1]{Z}.  The dichotomy of Theorem \ref{manymanyhom} fails to hold when $G$ is of size $2^{\aleph_0}$ since $\HEG$ is a group which is not n-slender and $\Hom(\HEG, \HEG)$ is of cardinality $2^{\aleph_0}$.  We give the abelian version of Theorem \ref{manymanyhom} as well (see Theorem \ref{abelianhomomorphims}), though it is a great deal easier to prove.

Theorem \ref{manymanyhom} is proved using the fundamental group of the harmonic archipelago \cite{BS}, which we denote $\HAG$.  The group $\HAG$ is a quotient of $\HEG$ but enjoys much greater freedom as to mappings, as is witnessed in the following theorem:

\begin{bigtheorem}\label{subgroups}  The group $\Aut(\HAG)$ contains an isomorphic copy of the full symmetric group $S_{2^{\aleph_0}}$ on a set of cardinality continuum.  Thus $\Aut(\HAG)$ is of cardinality $2^{2^{\aleph_0}}$ and all groups of size at most $2^{\aleph_0}$ are subgroups.
\end{bigtheorem}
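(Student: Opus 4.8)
The plan is to reduce the whole statement to the construction of a single injective homomorphism $S_\Omega \hookrightarrow \Aut(\HAG)$, where $\Omega$ is a fixed set of cardinality $2^{\aleph_0}$. Everything else is bookkeeping: since $\HAG$ is a quotient of $\HEG$ we have $|\HAG| \le 2^{\aleph_0}$, so $\Aut(\HAG)$ has cardinality at most $(2^{\aleph_0})^{2^{\aleph_0}} = 2^{2^{\aleph_0}}$; as $|S_\Omega| = 2^{2^{\aleph_0}}$, the embedding forces $|\Aut(\HAG)| = 2^{2^{\aleph_0}}$. For the last assertion, any group $G$ with $|G| \le 2^{\aleph_0}$ embeds by Cayley's theorem into $\operatorname{Sym}(G)$, and $\operatorname{Sym}(G)$ embeds into $S_\Omega$ by extending permutations by the identity on $\Omega \setminus G$; composing with $S_\Omega \hookrightarrow \Aut(\HAG)$ exhibits $G$ as a subgroup of $\Aut(\HAG)$.

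To build the embedding I would first fix a convenient combinatorial model. Write $\HEG$ as the group of reduced (transfinite) words in the letters $\ell_0^{\pm 1}, \ell_1^{\pm 1}, \dots$ and recall the defining relations of $\HAG = \HEG/N$ coming from the arches of \cite{BS}, under which consecutive circle loops become homotopic. The feature I would exploit is that these relations collapse all finite words: the images of the finitely supported elements of $\HEG$ lie in a single cyclic subgroup of $\HAG$, so the content of an element of $\HAG$ is carried by the \emph{infinite tails} of its representatives. In particular, for an infinite $A \subseteq \omega$ the image $x_A \in \HAG$ of the ordered infinite product $\prod_{n\in A}\ell_n$ is, up to this cyclic ambiguity, determined by the tail of $A$, and two such elements $x_A, x_B$ should be distinguishable whenever $A$ and $B$ differ infinitely.

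Next I would choose the index set. The circles of the earring may be re-indexed by the nodes of the binary tree $2^{<\omega}$; the branches $\beta \in 2^{\omega}$ then form an \emph{almost disjoint} family $\{A_\beta\}_{\beta \in 2^\omega}$ of infinite sets of nodes (distinct branches share only a finite initial segment), of cardinality $2^{\aleph_0}$. Set $\Omega = 2^\omega$ and $x_\beta = x_{A_\beta}$. Given a permutation $\sigma \in S_\Omega$, I would define $\phi_\sigma$ as the endomorphism induced by the relabelling carrying the block $A_\beta$ onto $A_{\sigma(\beta)}$. The point making this legal is that such an assignment of the generators $\ell_s$ to words whose supports shrink to the empty set extends, by the infinitary homomorphism-extension property of $\HEG$ (cf.\ \cite{E2}), to an endomorphism of $\HEG$; because finite words are absorbed into the cyclic subgroup, the finite overlaps between the blocks $A_\beta$ are immaterial, the arch relators are sent into $N$, and the endomorphism descends to $\HAG$. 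One then checks $\phi_{\sigma}\phi_{\tau} = \phi_{\sigma\tau}$ and $\phi_{\mathrm{id}} = \mathrm{id}$, so every $\phi_\sigma$ is an automorphism and $\sigma \mapsto \phi_\sigma$ is a homomorphism. Injectivity follows by evaluating at the $x_\beta$: if $\sigma \neq \tau$ then $\phi_\sigma(x_\beta) = x_{\sigma(\beta)} \neq x_{\tau(\beta)} = \phi_\tau(x_\beta)$ for a suitable $\beta$, using that distinct branches yield distinguishable tails.

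The main obstacle I anticipate is exactly the well-definedness and faithfulness of $\sigma \mapsto \phi_\sigma$: verifying that the tail-relabelling assignments satisfy the convergence condition needed to induce a genuine homomorphism out of $\HEG$, that they respect the quotient $N$ (so the arch relations survive despite the overlaps inherent in an almost disjoint family), and that the $x_\beta$ are rigid enough that a nontrivial permutation cannot be masked by the cyclic ambiguity of finite words. Controlling this ambiguity — showing that the infinite-tail invariant genuinely separates the $2^{\aleph_0}$ elements $x_\beta$ and is preserved by the permutation-induced maps — is where the real work lies; the cardinality count and the universal-subgroup conclusion are then immediate.
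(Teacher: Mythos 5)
You follow the same global skeleton as the paper --- embed $S_{2^{\aleph_0}}$ into $\Aut(\HAG)$ by permuting an almost disjoint family, deduce $|\Aut(\HAG)| = 2^{2^{\aleph_0}}$ from $|\HAG| = 2^{\aleph_0}$, and finish with Cayley's theorem --- but the mechanism you propose for the embedding has a genuine gap. You define $\phi_\sigma$ via ``an assignment of the generators $\ell_s$ to words,'' i.e.\ a generator-wise substitution in Eda's sense, chosen so that each block $A_\beta$ is carried onto $A_{\sigma(\beta)}$. No such assignment exists: in your family every node $s \in 2^{<\omega}$ lies on continuum many branches, so the single letter $\ell_s$ receives continuum many conflicting instructions (for each branch $\beta$ through $s$ it must go to the letter occupying the corresponding position of $A_{\sigma(\beta)}$), and these instructions disagree for any $\sigma$ that separates branches passing through $s$. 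This is a failure of well-definedness of the letter map itself, prior to taking any quotient; it is not cured by the observation that finite words die in $\HAG$, because the obstruction is not the pairwise finite overlaps but the fact that every single letter is shared by $2^{\aleph_0}$ blocks. This is precisely why the paper does not use letter maps here: instead it encodes each $S \in \Sigma$ by a word $U_S$ over a doubled alphabet, with $U_S(n) = b_n$ for $n \in S$ and $U_S(n) = c_n$ otherwise, so that every index $n$ occurs in every $U_S$ and the set $S$ is carried by the choice of letters rather than by the support; then maximal occurrences of the tails $U_{S,n}^{\pm 1}$ inside an arbitrary reduced word are uniquely determined and pairwise disjoint (Lemmas \ref{existenceofmaximal}, \ref{disjoint}, \ref{decomposition}), and $\phi_\sigma$ is defined by replacing such occurrences wholesale.

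Two further steps in your sketch also fail as stated. First, even the corrected occurrence-wise substitution is \emph{not} an endomorphism of $\HEG$ that then ``descends'': splitting a word can cut an occurrence into a finite prefix plus a tail, so the substitution is multiplicative only modulo finite words. Accordingly the paper's $\psi_f$ maps $\HEG_{a, b, c}$ directly into the quotient $\HAG_{a, b, c}$, and proving it is a homomorphism is the entire case analysis of Lemma \ref{almosthom} together with Lemma \ref{homomorphism}. Second, your injectivity argument rests on the unproved claim that distinct branches give distinct elements $x_\beta \in \HAG$; ``the tails differ infinitely'' is not sufficient, since an infinite reduced word can perfectly well be trivial in $\HAG$ --- for instance $(a_1a_2\cdots)\,a_0\,(a_1a_2\cdots)^{-1}$ is reduced and infinite but lies in the normal closure of $\HEG_1$. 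The paper obtains the separation $[[U_S]] \neq [[U_{S'}]]$ from the same substitution machinery: a map $\phi_{f}$ sending $U_S \mapsto U_T$ while fixing $U_{S'}$, composed with the homomorphism of Lemma \ref{nicehom}, which kills $\HAG_{b, c}$ but not $[[U_T]]$. So your strategy is the right one, but the core object --- the automorphism $\phi_\sigma$ --- is never actually constructed, and the specific route you propose (generator assignment, Eda extension, descent) cannot be made to work; the repair is the occurrence-substitution formalism that the paper develops for Proposition \ref{inforapenny}.
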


\noindent The group $\Aut(\HEG)$ is only of size $2^{\aleph_0}$ (using \cite[Corollary 2.11]{E2}).  

Section \ref{Prelim} provides some necessary background definitions and results.  In Section \ref{manyhom} we prove Theorems \ref{manymanyhom} and \ref{subgroups}.  In Section \ref{Residualslenderness} we turn our attention to homomorphisms \emph{from} a group to an n-slender group and consider two subgroups which completely determine whether a group of small cardinality is n-slender.  Using the machinery used to compare these subgroups we produce some more examples of n-slender groups, including an n-slender group with a nontrivial divisible element (see Example \ref{divisible}).  We then prove a theorem characterizing the n-slender subgroups of $\HEG$ via subgroups and pose two questions related to this theorem.
\end{section}

\begin{section}{Preparatory Results}\label{Prelim}

We give some necessary background information, including a combinatorial characterization of the Hawaiian earring group and the harmonic archipelago group, as well as some relevant facts related to them.  The Hawaiian earring group $\HEG$ can be described as a set of infinitary words on a countable alphabet.  Let $\{a_n^{\pm 1}\}_{n\in \omega}$ be a countably infinite set where each element has a formal inverse.  A \emph{word} $W$ is a function whose domain is a totally orderred set $\overline{W}$, whose codomain is $\{a_n^{\pm 1}\}_{n\in \omega}$ such that for each $n\in \omega$ the set $\{i\in \overline{W}\mid W(i) \in \{a_n^{\pm 1}\}\}$ is finite.  It follows that for any word $W$ the domain $\overline{W}$ is a countable total order.  We understand two words $W_0$ and $W_1$ to be the same, and write $W_0 \equiv W_1$, provided there exists an order isomorphism $\iota: \overline{W_0} \rightarrow W_1$ such that $W_1(\iota(i)) = W_0(i)$.  Let $\W$ denote the set of $\equiv$ equivalence classes.  For each $n\in \omega$ we define the function $p_n: \W \rightarrow \W$ by the restriction $p_n(W) = W\upharpoonright\{i\in \overline{W}\mid W(i) \in \{a_{m}^{\pm 1}\}_{m=0}^{n-1}\}$.  Clearly $p_m\circ p_n =p_m$ whenever $m\geq n$.  The word $p_n(W)$ has finite domain, and we write $W_0 \sim W_1$ if for every $n\in \omega$ we have $p_n(W_0)$ equal to $p_n(W_1)$ as elements in the free group over $\{a_{m}\}_{m=0}^{n-1}$.  Write $[W]$ for the equivalence class of $W$ under $\sim$.  

Given two words $W_0$ and $W_1$ we define their concatenation $W_0W_1$ to be the word whose domain is the disjoint union of $\overline{W_0}$ and $\overline{W_1}$ under the order extending that of the two subsets which places elements of $\overline{W_0}$ below those of $\overline{W_1}$, and such that $W_0W_1(i) = \begin{cases}W_0(i)$ if $i\in \overline{W_0}\\W_1(i)$ if $i\in \overline{W_1}\end{cases}$.  More generally suppose $\{W_{\lambda}\}_{\lambda \in \Lambda}$ is a collection of words with $\Lambda$ a totally ordered set and for each $n\in \omega$ the set $\{\lambda\in \Lambda\mid (\exists i\in \overline{W_{\lambda}}) W_{\lambda}(i) \in \{a_m^{\pm 1}\}_{m = 0}^{n-1}\}$ is finite.  We obtain an infinite concatenation $W \equiv \prod_{\lambda \in \Lambda}W_{\lambda}$ by setting $\overline{W}$ to be the disjoint union $\bigsqcup_{\lambda\in \Lambda}\overline{W_{\lambda}}$ under the obvious order and setting $W(i) = W_{\lambda}(i)$ where $i\in \overline{W_{\lambda}}$.  We'll use the notation $\prod_{\lambda \in \Lambda}I_{\lambda}$ for the concatenation of ordered sets $I_{\lambda}$ in the obvious way.  Given $W\in \W$ we let $W^{-1}$ be the word whose domain is $\overline{W}$ under the reverse order and such that $W^{-1}(i) = (W(i))^{-1}$.

The set $\W/\sim$ has a group structure defined by letting $[W_0][W_1] = [W_0W_1]$ and $[W]^{-1} = [W^{-1}]$.  The identity element is the $\sim$ class of the empty word $E$.  This group is isomorphis to the fundamental group of the Hawaiian earring and we denote it $\HEG$.  For each $n\in \omega$ the word map $p_n$ defines a retraction homomorphism, also denoted $p_n$, which takes $\HEG$ to a subgroup which is isomorphic to the free group on $\{a_m\}_{m=0}^{n-1}$, which we denote $\HEG_n$.  Again, $p_m\circ p_n = p_m$ whenever $m \geq n$.  The set of all elements of $\HEG$ which have a representative using no letters in $\{a_m^{\pm 1}\}_{m=0}^{n-1}$ is also a retract subgroup, which we denote $\HEG^n$.  There is a natural isomorphism $\HEG \simeq \HEG_n * \HEG^n$.

\begin{definition}  A group $G$ is \emph{noncommutatively slender}, or \emph{n-slender}, if for every homomorphism $\phi: \HEG \rightarrow G$ there exists $n\in \omega$ such that $\phi\circ p_n = \phi$.
\end{definition}

\noindent Equivalently, $G$ is n-slender if for every homomorphism $\phi: \HEG\rightarrow G$ there exists $n\in \omega$ such that $\HEG^n \leq \ker(\phi)$.  The n-slender groups do not contain torsion or $\mathbb{Q}$ as a subgroup (\cite[Theorem 3.3]{E1}, \cite{Sa}).  Free (abelian) groups, one-relator groups, and a host of other groups are n-slender (see \cite{Co2}, \cite[Theorems A, B]{CoCo}).

As is the case with a free group, there exists a characterization of $\HEG$ which utilizes so-called reduced words.  We say $W\in \W$ is \emph{reduced} if for every writing $W\equiv W_0W_1W_2$ such that $W_1\sim E$ we have $W_1 \equiv E$.  It is clear that if $W$ is reduced then so is $W^{-1}$, and if $W \equiv W_0W_1$ is reduced then so are $W_0$ and $W_1$.  We have the following (see \cite[Theorem 1.4, Corollary 1.7]{E1}):

\begin{lemma}\label{reduced}  Given $W\in \W$ there exists a reduced word $W_0 \in \W$ such that $[W] = [W_0]$, and this $W_0$ is unique up to $\equiv$.  Moreover, letting $W_0$ and $W_1$ be reduced, there exist unique words $W_{0, 0}, W_{0, 1}, W_{1, 0}, W_{1, 1}$ such that 

\begin{enumerate}  \item $W_0 \equiv W_{0, 0}W_{0, 1}$

\item $W_1 \equiv W_{1, 0}W_{1, 1}$

\item $W_{0, 1}\equiv W_{1, 0}^{-1}$

\item $W_{0, 0}W_{1, 1}$ is reduced
\end{enumerate}

\end{lemma}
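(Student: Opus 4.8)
The plan is to reduce both the existence--uniqueness statement and the cancellation decomposition to the construction of a single canonical object: a maximal ``cancellation pairing'' on the positions of a word. Given $W \in \W$, I call the closed subinterval determined by positions $i \leq j$ of $\overline{W}$ a \emph{trivial block} if $W\upharpoonright\{k \mid i \leq k \leq j\} \sim E$. I would declare a position $i \in \overline{W}$ to be \emph{cancellable} if it lies in some trivial block, let $D \subseteq \overline{W}$ be the set of cancellable positions, and set $W_0 \equiv W\upharpoonright(\overline{W}\setminus D)$.

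For existence I must check that $[W_0] = [W]$ and that $W_0$ is reduced, and both are verified one projection at a time. Here the definitions interact well with the maps $p_n$: since $W\upharpoonright\{k \mid i \leq k \leq j\}\sim E$ forces $p_n(W\upharpoonright\{k \mid i \leq k \leq j\}) = E$ in the free group $\HEG_n$ for every $n$, and since in a free group a subword that reduces to the identity cancels internally, each cancellable letter of index below $n$ freely cancels inside $p_n(W)$; as every generator occurs only finitely often, at each fixed level this is a finite computation showing $p_n(W_0) = p_n(W)$ and that $p_n(W_0)$ is freely reduced. That $p_n(W_0)$ is reduced for all $n$ is exactly the statement that $W_0$ is reduced. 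The delicate point, and the step I expect to be the main obstacle, is showing that removing the \emph{entire} set $D$ at once (rather than one trivial block at a time) is legitimate: overlapping or nested trivial blocks must be organized into a single non-crossing, nested pairing $\tau$ of the positions of $D$, with each matched pair $\{i, \tau(i)\}$ bounding a trivial block and satisfying $W(\tau(i)) = W(i)^{-1}$. Establishing that such a maximal pairing exists and is unique is the technical heart; the countability of $\overline{W}$ together with the finiteness of each $\{k \mid W(k) \in \{a_m^{\pm 1}\}\}$ is what makes the bookkeeping tractable.

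Uniqueness of the reduced representative then follows: if $W_0$ and $W_0'$ are reduced with $[W_0] = [W_0']$, then comparing $p_n(W_0)$ with $p_n(W_0')$ for all $n$, together with the fact that a reduced word has no cancellable positions, lets me build the required order isomorphism $\iota \colon \overline{W_0} \to \overline{W_0'}$ matching letters, giving $W_0 \equiv W_0'$.

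Finally, the cancellation decomposition is read off from the same pairing applied to the concatenation $W_0 W_1$. Since $W_0$ and $W_1$ are each reduced, neither contains a nonempty trivial block, so no matched pair of the canonical pairing of $W_0W_1$ can have both of its positions inside $\overline{W_0}$ or both inside $\overline{W_1}$; hence every matched pair has exactly one position in each factor. By the non-crossing property the cancelled positions of $W_0$ form a terminal segment, which I name $W_{0,1}$, and those of $W_1$ form an initial segment $W_{1,0}$, with the pairing identifying $W_{0,1} \equiv W_{1,0}^{-1}$; the surviving segments $W_{0,0}$ and $W_{1,1}$ then satisfy $W_0 \equiv W_{0,0}W_{0,1}$, $W_1 \equiv W_{1,0}W_{1,1}$, and $W_{0,0}W_{1,1}$ reduced. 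Uniqueness of this decomposition is inherited from the uniqueness of the maximal cancellation pairing.
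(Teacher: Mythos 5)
There is a genuine flaw, and it appears before the point you identify as the ``technical heart'': your construction of $W_0$ is already wrong for words of length three. Take $W$ with $\overline{W}=\{1<2<3\}$, $W(1)=a_0$, $W(2)=a_0^{-1}$, $W(3)=a_0$. The intervals $\{1,2\}$ and $\{2,3\}$ are both trivial blocks, so \emph{every} position of $W$ is cancellable in your sense; your recipe $W_0\equiv W\upharpoonright(\overline{W}\setminus D)$ then gives $W_0\equiv E$, yet $p_1(W)=a_0a_0^{-1}a_0=a_0\neq 1$ in the free group, so $[W]\neq[E]$ and the claimed identity $p_n(W_0)=p_n(W)$ fails. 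The same example refutes the two structural claims you lean on afterwards: no perfect pairing of $D$ can exist (here $|D|$ is odd), and maximal non-crossing block-bounding pairings are \emph{not} unique --- $\{(1,2)\}$ and $\{(2,3)\}$ are two distinct maximal ones, removing different sets of positions. The underlying error is the inference ``a letter lying inside some subword $\sim E$ is removed by free reduction'': membership in a trivial block is a position-by-position condition, whereas cancellation is a global, choice-dependent matching, and the support of any one maximal matching is in general a proper subset of the union of all trivial blocks.

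The order of operations must therefore be reversed: one first chooses a maximal coherent (nested, non-crossing, block-bounding) \emph{partial} pairing, and only then deletes its support. What is unique is not the pairing but the $\equiv$-class of the resulting word, and the real content of the lemma is exactly what your outline postpones or inherits from the false uniqueness claim: that deleting the support of a maximal pairing does not change the $\sim$-class even when infinitely many deletions occur at once, that the result is reduced, and that it is independent of which maximal pairing was chosen. (This also undercuts your uniqueness arguments for the reduced representative and for the decomposition (1)--(4), since both are ``inherited from the uniqueness of the maximal cancellation pairing.'') For what it is worth, the paper does not prove this lemma at all --- it quotes it from Eda \cite[Theorem 1.4, Corollary 1.7]{E1} --- so you are attempting something the paper deliberately outsources. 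One piece of your proposal does survive intact: granting existence and uniqueness of reduced forms, your observation that in a product $W_0W_1$ of reduced words every cancelling pair must straddle the junction (a pair inside one factor would bound a nonempty subword $\sim E$, contradicting reducedness) is correct and does yield the shape of the decomposition (1)--(4).
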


\noindent  Thus one can consider $\HEG$ to be the set of all reduced words $\Red\subseteq \W$ and define the binary operation via the concatenation (4).

Some endomorphisms of $\HEG$ can be defined by simply using mappings of the set $\{a_n\}_{n\in\omega}$.  For this, suppose that $\{W_n\}_{n\in \omega}$ is a collection of words such that for every $m\in \omega$ the set $\{n\in \omega: (\exists i\in \overline{W_n}) W_n(i) \in \{a_m^{\pm 1}\}\}$ is finite.  Defining $f:\{a_n^{\pm 1}\}_{n\in\omega} \rightarrow \{W_n\}_{n\in \omega}$ by $f(a_n^{\pm 1}) = W_n^{\pm 1}$ one can extend $f$ to all of $\W$ by letting $f(W)\equiv \prod_{i\in \overline{W}}f(W(i))$.  This induces an endomorphism $\phi_f: \HEG\rightarrow \HEG$ by letting $\phi_f([W]) = [f(W)]$ (see \cite[Proposition 1.9]{E1}).  Surprisingly, every endomorphism is equal, up to conjugation, to a homomorphism defined in this way (this is essentially \cite[Corollary 2.11]{E2}):

\begin{lemma}\label{continuoustoconjugation}  If $\phi\in \Hom(\HEG, \HEG)$ there exists $h\in \HEG$ and a mapping $f: a_n\mapsto W_n$ such that $\phi(g) = h^{-1}\phi_f(g)h$.
\end{lemma}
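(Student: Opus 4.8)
The plan is to push all of the difficulty into the construction of a single conjugating element and to make the verification of the formula a routine projection argument. Write $g_n = \phi(a_n)$ and let $U_n$ be the reduced word representing $g_n$ (Lemma~\ref{reduced}). In order to realise $\phi$ as $h^{-1}\phi_f(\cdot)h$ I must choose the family $\{W_n\}$ defining $f$; evaluating the desired identity on $a_n$ gives $g_n = h^{-1}[W_n]h$, so necessarily $[W_n] = hg_nh^{-1}$. Since $\phi_f$ is defined only when $\{W_n\}$ satisfies the admissibility hypothesis of its construction, namely that for each $m$ only finitely many $W_n$ involve $a_m$, the whole statement reduces to producing an $h\in\HEG$ for which $\{hg_nh^{-1}\}_{n\in\omega}$ is admissible; equivalently, for each $m$ all but finitely many $hg_nh^{-1}$ lie in $\HEG^{m}$. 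I would then set $f(a_n)$ equal to the reduced word representing $hg_nh^{-1}$ and form $\phi_f$.

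Granting such an $h$, the verification that $\phi(g)=h^{-1}\phi_f(g)h$ for every $g\in\HEG$ should be immediate from the fact that an element of $\HEG$ is determined by its projections: by the definition of $\sim$, two elements agree as soon as their images under every $p_m$ agree. Thus it suffices to check $p_m(\phi(g)) = p_m(h^{-1}\phi_f(g)h)$ for each fixed $m$ and all $g$. Both $g\mapsto p_m(\phi(g))$ and $g\mapsto p_m(h^{-1}\phi_f(g)h)$ are homomorphisms from $\HEG$ into the free group $\HEG_m$, which is n-slender; hence each factors through $p_N$ for a common $N$ and is determined by its restriction to the free subgroup $\HEG_N$. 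There the two maps are computed from their values on $a_0,\dots,a_{N-1}$, and on these generators they agree because $p_m(h^{-1}[W_i]h) = p_m\bigl(h^{-1}(hg_ih^{-1})h\bigr) = p_m(g_i) = p_m(\phi(a_i))$. No separate treatment of infinite products is needed.

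The single substantive ingredient is admissibility, and here I would lean on the n-slenderness of free groups. Fixing $m$ and applying n-slenderness to the homomorphism $p_m\circ\phi\colon\HEG\to\HEG_m$ yields an $N_m$ with $\HEG^{N_m}\leq\ker(p_m\circ\phi)$. Because $a_n\in\HEG^{N_m}$ whenever $n\geq N_m$, this gives $g_n\in\ker p_m$ for all $n\geq N_m$: for each fixed $m$, the reduced words $U_n$ with $n$ large have their sub-$m$ letters cancelling in the free sense once the letters of index $\geq m$ are deleted.

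The hard part, which I expect to absorb essentially all of the work, is upgrading this countable system of $\ker p_m$ conditions to a single universal conjugator $h$ moving every $U_n$ honestly out of the small letters rather than merely up to the weaker kernel condition. The obstruction is precisely that a word in $\ker p_m$ may still display sub-$m$ letters, arranged as a balanced nested pattern and separated by genuine blocks of larger letters. My approach would be to read off from this nested pattern, for each $U_n$, its outermost peripheral segments in the small letters, to show via the cancellation decomposition of Lemma~\ref{reduced} that these segments stabilise as $n\to\infty$ on every finite alphabet, and to assemble the stable values into a word $h$; conjugation by $h$ should then peel off the balanced collar and leave a reduced word genuinely avoiding the small letters. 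Verifying that one $h$ serves all $m$ simultaneously, and that the limit defining $h$ is itself a legitimate (hence admissible) word of $\HEG$, is exactly where the combinatorics of Lemma~\ref{reduced} must be driven hardest.
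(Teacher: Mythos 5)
Your conditional reasoning is sound, and in places tidier than a head-on attack: evaluating the desired identity at $[a_n]$ does force $[W_n]=hg_nh^{-1}$, so the lemma is equivalent to producing a single $h$ making the family $\{hg_nh^{-1}\}_{n\in\omega}$ admissible; and, granting such an $h$, your verification through the projections $p_m$ and the n-slenderness of the finite-rank free groups $\HEG_m$ is correct. But you should be aware that the paper does not prove this lemma at all --- it is quoted as essentially Eda's result \cite[Corollary 2.11]{E2} --- and the part you have labelled ``the hard part'' is not a step of the proof; it \emph{is} the theorem.

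The genuine gap is that your sketched construction of $h$ cannot succeed from the ingredients you permit yourself. After the n-slenderness step, the only property of the sequence $g_n=\phi([a_n])$ that you retain from the hypothesis that $\phi$ is a homomorphism on all of $\HEG$ is: for each $m$, $g_n\in\ker p_m$ for all large $n$. That property does not imply any stabilization of the small-letter collars of the reduced words $U_n$. Concretely, take $g_n=a_0^{\,n}v_na_0^{-n}$ with $v_n\in\HEG^n$ nontrivial: then $p_m(g_n)=1$ for all $n\geq m$, every finite product of the $g_n$ behaves freely (the assignment $[a_n]\mapsto g_n$ always extends to a homomorphism on the subgroup $\langle [a_n]\colon n\in\omega\rangle$, which is free on these generators), yet since the reduced form of $ha_0^{\,n}v_na_0^{-n}h^{-1}$ contains the letters of $ha_0^{\,n}$ whenever $ha_0^{\,n}\neq 1$, no single $h$ can place more than one $hg_nh^{-1}$ in $\HEG^1$. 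So an argument that operates, as yours does, only on the words $U_n$ and their pairwise cancellations via Lemma~\ref{reduced} would ``prove'' a false statement; it cannot distinguish a genuine endomorphism of $\HEG$ from an arbitrary generator assignment satisfying the kernel conditions. What excludes such sequences is precisely that $\phi$ must also be defined on, and respect, infinite concatenations: one has to evaluate $\phi$ on words of the form $\prod_k a_{n_k}$ along subsequences chosen to diagonalize against a putative failure of stabilization, and play the resulting single reduced word against the $U_{n_k}$. That diagonalization is the engine of Eda's proof of \cite[Corollary 2.11]{E2}; without it (or without simply citing Eda, as the paper does), your outline does not close.
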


An important quotient of $\HEG$ is the so called harmonic archipelago group $\HAG$, which we define by $\HEG/\langle\langle\bigcup_{n\in \omega}\HEG_n\rangle\rangle$.  This uncountable group is isomorphic to the fundamental group of the harmonic archipelago (see \cite{CHM} or \cite{Ho}).  Let $\pi:\HEG \rightarrow \HAG$ denote the quotient map.  We write $[[W_0]] = [[W_1]]$ if $\pi([W_0]) = \pi([W_1])$.  Since $[[W]]=[[E]]$ for any finite $W\in \W$, it is clear that assuming $W \equiv W_0W_1W_2$ with $W_1$ finite, we have $[[W]]=[[W_0W_2]]$.  In consequence, we obtain:

\begin{lemma}\label{hagfact1}  For all $n\in \omega$, we have $\pi(\HEG^n) = \HAG$.  If $\HAG$ has a nontrivial homomorphic image in $G$ then $G$ is not n-slender.
\end{lemma}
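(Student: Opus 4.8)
The plan is to reduce both assertions to the single observation that the free factor $\HEG_n$ dies under $\pi$. Since $n\in\omega$ we have $\HEG_n \subseteq \bigcup_{m\in\omega}\HEG_m \subseteq \langle\langle\bigcup_{m\in\omega}\HEG_m\rangle\rangle = \ker\pi$, so $\pi(\HEG_n) = \{1\}$. Everything else follows by bookkeeping.

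For the first assertion I would invoke the free product decomposition $\HEG \simeq \HEG_n * \HEG^n$. By the normal form theorem for free products, every element $g\in\HEG$ is a finite alternating product of elements drawn from $\HEG_n$ and from $\HEG^n$. Applying $\pi$ and using $\pi(\HEG_n)=\{1\}$ collapses each $\HEG_n$-syllable to the identity, so $\pi(g)$ lies in the subgroup $\pi(\HEG^n)\leq\HAG$. As $g$ was arbitrary and $\pi$ is surjective, this yields $\HAG = \pi(\HEG) = \pi(\HEG^n)$. The only point deserving care is that, although $\HEG$ is described via infinitary words, its elements are genuine \emph{finite} words in the free product $\HEG_n * \HEG^n$; this is precisely what the stated isomorphism supplies, and it is what guarantees the collapse involves only finitely many syllables.

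For the second assertion, suppose $\psi:\HAG\rightarrow G$ is a homomorphism with $\psi(\HAG)\neq\{1\}$, and set $\phi = \psi\circ\pi\in\Hom(\HEG,G)$. For each $n\in\omega$ the first assertion gives $\phi(\HEG^n) = \psi(\pi(\HEG^n)) = \psi(\HAG)\neq\{1\}$, so $\HEG^n\not\leq\ker\phi$. By the equivalent formulation of n-slenderness recorded immediately after the definition, no $n$ satisfies $\HEG^n\leq\ker\phi$, and hence $\phi$ witnesses that $G$ is not n-slender.

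I do not anticipate a genuine obstacle here: once $\pi(\HEG_n)=\{1\}$ and the free product normal form are in hand the argument is routine. The closest thing to a subtlety is the interplay, flagged above, between the infinitary presentation of $\HEG$ and the finitary normal form in $\HEG_n * \HEG^n$, which is what makes the surjectivity argument clean rather than requiring a direct manipulation of infinite words.
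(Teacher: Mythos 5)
Your proof is correct and follows essentially the same route as the paper's: both use the decomposition $\HEG \simeq \HEG_n * \HEG^n$ together with the fact that $\HEG_n$ dies under $\pi$ (the paper notes its elements are finite words, while you observe $\HEG_n$ lies in the normal closure defining $\ker\pi$ --- an equally valid, in fact more direct, justification) to get $\pi(\HEG^n)=\HAG$, and then both derive the second claim by composing a nontrivial $\psi:\HAG\rightarrow G$ with $\pi$ and noting that $\phi(\HEG^n)=\psi(\HAG)\neq\{1\}$ for every $n$. Your write-up merely spells out the finite-syllable bookkeeping that the paper leaves implicit.
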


\begin{proof}  The first claim follows from the fact that $\HEG \simeq \HEG_n *\HEG^n$ and elements of $\HEG_n$ have a representative which is a finite word.  For the second claim, if $\phi(\HAG)$ has nontrivial image then $\phi \circ \pi$ witnesses that $G$ is not n-slender by the first claim.
\end{proof}

When $|G|<2^{\aleph_0}$, the converse of the second claim in Lemma \ref{hagfact1} holds as well.  First we state a lemma (which is a special instance of \cite[Theorem 4.4 (1)]{CC2}):

\begin{lemma}\label{stabilize}  If $|G|< 2^{\aleph_0}$ and $\phi:\HEG \rightarrow G$ is a homomorphism then the sequence of images $\phi(\HEG^n)$ eventually stabilizes.
\end{lemma}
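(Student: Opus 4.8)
The plan is to argue by contradiction: assuming the non-increasing sequence of subgroups $\phi(\HEG^n)$ fails to stabilize, I will manufacture $2^{\aleph_0}$ pairwise distinct elements of $G$, contradicting $|G| < 2^{\aleph_0}$. First, since $\HEG^{n+1} \leq \HEG^n$, the images form a non-increasing chain $\phi(\HEG^0) \supseteq \phi(\HEG^1) \supseteq \cdots$, and failure to stabilize means strict drops occur infinitely often. Recursively choosing indices, I extract $n_0 < n_1 < \cdots$ with $\phi(\HEG^{n_k}) \supsetneq \phi(\HEG^{n_{k+1}})$ for every $k$, and for each $k$ I select a witness $g_k \in \phi(\HEG^{n_k}) \setminus \phi(\HEG^{n_{k+1}})$ together with a word $W_k$ using only letters $\{a_m^{\pm 1}\}_{m \geq n_k}$ with $\phi([W_k]) = g_k$.

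Second, I encode subsets of $\omega$. For each $S \subseteq \omega$ I form the infinite concatenation $x_S = [\prod_{k \in S} W_k]$, ordered by increasing $k$. This is a legitimate element of $\HEG$: since the $n_k$ strictly increase, each fixed letter $a_m^{\pm 1}$ can appear only in those finitely many $W_k$ with $n_k \leq m$, and within each such $W_k$ only finitely often, so the required finiteness-of-occurrences condition holds. This yields a map $S \mapsto \phi(x_S)$ from the $2^{\aleph_0}$ subsets of $\omega$ into $G$.

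The crux, and the step I expect to be the main obstacle, is showing this map is injective. Given $S \neq S'$ let $k_0$ be least in their symmetric difference, say $k_0 \in S \setminus S'$. Because $S$ and $S'$ agree below $k_0$, both words split as a common prefix $A = \prod_{k < k_0,\, k \in S} W_k$ followed by tails $T = \prod_{k > k_0,\, k \in S} W_k$ and $T' = \prod_{k > k_0,\, k \in S'} W_k$, giving $x_S = [A][W_{k_0}][T]$ and $x_{S'} = [A][T']$. The key observation is that every letter occurring in $T$ or $T'$ has index at least $n_{k_0+1}$, whence $[T], [T'] \in \HEG^{n_{k_0+1}}$. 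Applying $\phi$ and cancelling $\phi([A])$, the equality $\phi(x_S) = \phi(x_{S'})$ would force $g_{k_0} = \phi([T'])\phi([T])^{-1} \in \phi(\HEG^{n_{k_0+1}})$, contradicting the choice of $g_{k_0}$. Hence $\phi(x_S) \neq \phi(x_{S'})$, producing $2^{\aleph_0}$ distinct images and the desired contradiction. The delicate points are verifying that infinite concatenation genuinely respects the group operation in the splitting $x_S = [A][W_{k_0}][T]$ and that the tails land in the retract subgroup $\HEG^{n_{k_0+1}}$; once these are secured the cancellation argument is immediate.
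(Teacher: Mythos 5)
Your proof is correct. Note, however, that the paper does not actually prove this lemma: it is quoted as a special instance of \cite[Theorem 4.4 (1)]{CC2}, so your argument is a self-contained replacement for an outsourced citation rather than a variant of an in-paper proof. The structure you use is sound: the chain $\phi(\HEG^0) \supseteq \phi(\HEG^1) \supseteq \cdots$ is non-increasing, failure to stabilize yields indices $n_0 < n_1 < \cdots$ with strict drops, the witnesses $g_k$ admit representative words $W_k$ in $\HEG^{n_k}$ by the very definition of $\HEG^{n_k}$, and the infinite concatenations $x_S$ are legitimate words precisely because $n_k \nearrow \infty$ forces each letter $a_m^{\pm 1}$ to occur in only finitely many of the $W_k$. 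The cancellation step is also airtight: since $\phi(\HEG^{n_{k_0}+1})$ (more precisely $\phi(\HEG^{n_{k_0+1}})$) is a subgroup of $G$, the equality $\phi(x_S) = \phi(x_{S'})$ forces $g_{k_0} = \phi([T'])\phi([T])^{-1}$ to lie in it, contradicting the choice of $g_{k_0}$; and the splitting $x_S = [A][W_{k_0}][T]$ is immediate from the fact that the group operation in $\HEG$ is induced by concatenation of words. It is worth observing that your encoding of subsets of $\omega$ by infinite products, distinguished by a ``first point of disagreement'' argument, is exactly the style of technique the paper itself deploys elsewhere (the telescoping products in Lemma \ref{persistentimage}, and the words $U_S$ indexed by an almost disjoint family $\Sigma$ in the proof of Proposition \ref{inforapenny}), so your proof fits naturally into the paper's toolkit and is very likely the same idea underlying the cited result of Cannon and Conner.
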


The following is an argument of Conner \cite{C}:

\begin{lemma}\label{persistentimage}  Suppose $\phi: \HEG \rightarrow G$ and $S \subseteq \bigcap_{n\in \omega}\phi(\HEG^n)$ with $S$ countable.  Then there exists a homomorphism $\psi: \HAG \rightarrow G$ with $\psi(\HAG) \supseteq S$.
\end{lemma}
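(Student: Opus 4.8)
The plan is to build a single homomorphism $\rho\colon\HEG\to G$ whose image contains $S$ and which kills every finitely supported element, so that $\rho$ descends through $\pi$ to the desired $\psi\colon\HAG\to G$. I will take $\rho=\phi\circ\phi_f$ for a letter map $f$ chosen so that each generator maps under $\rho$ to $e$, while certain infinite products of generators map onto the elements of $S$.

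First I would fix an enumeration $S=\{s_k\}_{k\in\omega}$ (allowing repetitions if $S$ is finite) and a finite-to-one function $r\colon\omega\times\omega\to\omega$, that is, one with $\{(k,n):r(k,n)\le M\}$ finite for every $M$; note this forces $r(k,n)\to\infty$ in $n$ for each fixed $k$. Using $s_k\in\bigcap_N\phi(\HEG^N)$, for each $(k,n)$ I would choose $x_{k,n}\in\HEG^{r(k,n)}$ with $\phi(x_{k,n})=s_k$, and set $z_{k,n}:=x_{k,n}x_{k,n+1}^{-1}$, so that $\phi(z_{k,n})=s_ks_k^{-1}=e$ and $z_{k,n}$ is representable using only letters of index at least $\min(r(k,n),r(k,n+1))$. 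After identifying $\{a_m\}_{m\in\omega}$ with $\{a_{k,n}\}_{(k,n)\in\omega\times\omega}$ through a fixed bijection $\omega\cong\omega\times\omega$, I would define $f(a_{k,n}):=z_{k,n}$. The finite-to-one condition on $r$ ensures that each letter of index $m$ appears in only finitely many of the words $z_{k,n}$, so $f$ meets the local finiteness hypothesis and induces $\phi_f\in\Hom(\HEG,\HEG)$. Since $\rho(a_{k,n})=\phi([z_{k,n}])=e$ for every generator, $\rho$ annihilates every finitely supported element; hence $\bigcup_m\HEG_m\subseteq\ker\rho$, so $\langle\langle\bigcup_m\HEG_m\rangle\rangle\subseteq\ker\rho$, and $\rho$ factors as $\rho=\psi\circ\pi$ for some $\psi\colon\HAG\to G$.

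It then remains to check $S\subseteq\psi(\HAG)=\rho(\HEG)$, which I would do by producing for each $k$ a single preimage of $s_k$. Consider the infinite product $g_k:=\prod_{n\in\omega}a_{k,n}\in\HEG$, a legitimate word since each letter occurs once, so that $\phi_f([g_k])=[\prod_{n}z_{k,n}]$. The key computation is a telescoping identity: I claim $[\prod_{n}z_{k,n}]=[x_{k,0}]$. By definition of $\sim$ it suffices to compare $p_m$-images for each $m$. Because $r(k,n)\to\infty$ in $n$, there is a threshold $n_0$ with $p_m(x_{k,n})=E$ (hence $p_m(z_{k,n})=E$) for all $n\ge n_0$, so $p_m(\prod_n z_{k,n})=p_m(z_{k,0}\cdots z_{k,n_0-1})$, which telescopes in the free group $\HEG_m$ to $p_m(x_{k,0}x_{k,n_0}^{-1})=p_m(x_{k,0})$. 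As this holds for all $m$, we get $\phi_f([g_k])=[x_{k,0}]$ and thus $\rho(g_k)=\phi([x_{k,0}])=s_k$, giving $S\subseteq\psi(\HAG)$.

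I expect the main obstacle to be reconciling two competing demands on the representatives $x_{k,n}$: the telescoping that drives $\prod_n z_{k,n}$ down to a preimage of $s_k$ requires, for each fixed $k$, that the supports escape to infinity in $n$, whereas the well-definedness of $\phi_f$ requires that the supports escape to infinity jointly over all $(k,n)$. Both are secured simultaneously by choosing $x_{k,n}\in\HEG^{r(k,n)}$ with $r$ finite-to-one, and this choice is available precisely because the hypothesis places $S$ in the intersection of all the tail images $\phi(\HEG^n)$, so that each $s_k$ admits preimages of arbitrarily large support.
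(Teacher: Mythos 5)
Your proposal is correct and follows essentially the same route as the paper: the paper also chooses preimages of each $s_k$ in $\HEG^n$ with $n\to\infty$, builds an endomorphism of $\HEG$ sending generators to telescoping differences of consecutive preimages (there realized as a composition $\beth\circ\tau$ of two letter-defined endomorphisms rather than your single map $\phi_f$), notes the composite with $\phi$ kills all generators and hence descends to $\HAG$, and recovers each $s_k$ as the image of an infinite telescoping word. Your folding of the two endomorphisms into one letter map, with the finite-to-one function $r$ guaranteeing both well-definedness and the telescoping, is only a cosmetic difference.
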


\begin{proof}  Let $S = \{g_0, g_1, \ldots\}$ be an enumeration of $S$.  For each $m\in \omega$ and $n\geq m$ select $[W_{(m, n)}]\in \HEG^n$ such that $\phi([W_{(m, n)}]) = g_m$.  Let $f: \omega \rightarrow \{(m, n)\in \omega^2: n \geq m\}$ be a bijection.  We get an endomorphism $\beth: \HEG \rightarrow \HEG$ such that $\beth([a_k]) = [W_{f(k)}]$, and letting $\phi_0 = \phi \circ \beth$ we see that for each $m\in \omega$ there exist arbitrarily large $k\in \omega$ such that $\phi_0([a_k]) = g_m$.

Now for each $m\in \omega$ we let $N_m = \{k\in \omega: \phi_0([a_k]) = g_m\}$  and notice that $\omega = \bigsqcup_{m\in \omega}N_m$ and each $N_m$ is infinite.  Enumerate each $N_m$ in the standard way $N_m = \{k_{(m, 0)}, k_{(m, 1)}, \ldots\}$ so that $k_{(m, i)} < k_{(m, i+1)}$.  Let $\gamma: \omega\rightarrow \omega^2$ be a bijection.  Define the endomorphism $\tau:\HEG \rightarrow \HEG$ such that $\tau([a_p]) = [a_{\gamma(p)}a_{\proj_1(\gamma(p)), \proj_2(\gamma(p))+1}^{-1}]$ (here $\proj_i$ denotes projection to the $i$ coordinate).  Notice now that $\phi_1 = \phi_0 \circ \tau:\HEG \rightarrow \HEG$ has $\phi_1([a_j]) = 1_G$ for every $j\in \omega$.  Thus $\phi_1$ descends to a homomorphism $\psi: \HAG \rightarrow G$ with the same image as $\phi_1$.  We shall be done if we show that $S \subseteq \phi(\HEG) = \psi(\HAG)$.

Letting $m\in \omega$ be given we notice that 

\begin{center} $g_m = \phi_0([a_{k_{(m, 0)}}])$

$= \phi_0([a_{k_{(m, 0)}}a_{k_{(m, 1)}}^{-1}a_{k_{(m, 1)}}a_{k_{(m, 2)}}^{-1}a_{k_{(m, 2)}}a_{k_{(m, 3)}}^{-1}a_{k_{(m, 3)}}a_{k_{(m, 4)}}^{-1}\cdots])$

$= \phi_1 \circ \tau([a_{\gamma^{-1}(m, 0)}a_{\gamma^{-1}(m, 1)}\cdots])$

$= \psi([[a_{\gamma^{-1}(m, 0)}a_{\gamma^{-1}(m, 1)}\cdots]]) \in \psi(\HAG)$
\end{center}
\end{proof}

Combining Lemmas \ref{stabilize} and \ref{persistentimage} one immediately obtains:

\begin{lemma}\label{hagfact2}  If $|G|<2^{\aleph_0}$ with $G$ not n-slender, then there is a nontrivial homomorphism from $\HAG$ to $G$.
\end{lemma}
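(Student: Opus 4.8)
The plan is to combine Lemmas \ref{stabilize} and \ref{persistentimage} in the most direct way, using the reformulation of n-slenderness in terms of kernels given just after the definition. Since $G$ is not n-slender, I would first invoke that reformulation to select a homomorphism $\phi: \HEG \to G$ for which no $n \in \omega$ satisfies $\HEG^n \leq \ker(\phi)$; equivalently, the image $\phi(\HEG^n)$ is nontrivial for every $n$.

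Next I would observe that the subgroups $\HEG^n$ form a descending chain, $\HEG^{n+1} \leq \HEG^n$, since requiring representatives to avoid one more generator is a stronger condition. Hence the images $\phi(\HEG^n)$ form a descending chain of subgroups of $G$. By Lemma \ref{stabilize} this chain stabilizes, say $\phi(\HEG^n) = \phi(\HEG^N)$ for all $n \geq N$, and because the chain is descending the stabilized value coincides with the intersection $\bigcap_{n \in \omega} \phi(\HEG^n)$. Each term of the chain is nontrivial by the choice of $\phi$, so this intersection is a nontrivial subgroup of $G$.

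Now I would pick a nontrivial element $g \in \bigcap_{n \in \omega} \phi(\HEG^n)$ and set $S = \{g\}$, a countable (indeed singleton) subset of the intersection. Lemma \ref{persistentimage} then supplies a homomorphism $\psi: \HAG \to G$ whose image contains $S$, and in particular contains $g \neq 1_G$. Thus $\psi$ is nontrivial, which is exactly what is required.

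There is no serious obstacle here, as the two preceding lemmas do all the work; the only point demanding care is the bookkeeping that turns ``stabilizes'' into ``nontrivial intersection.'' Concretely, one must note both the monotonicity of the chain $\phi(\HEG^n)$, so that the eventually constant value equals $\bigcap_{n} \phi(\HEG^n)$, and that non-n-slenderness can be witnessed by a single $\phi$ with $\phi(\HEG^n) \neq \{1\}$ for every $n$, which guarantees that the stabilized subgroup from which $g$ is drawn is not trivial.
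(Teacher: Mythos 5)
Your proposal is correct and is exactly the argument the paper intends: the paper states Lemma \ref{hagfact2} as an immediate consequence of combining Lemmas \ref{stabilize} and \ref{persistentimage}, and your write-up simply fills in the routine details (descending chain, stabilized value equals the intersection, apply Lemma \ref{persistentimage} to a singleton). No gaps.
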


Next we state a special case of \cite[Theorem 1.3]{E3}:

\begin{lemma}\label{funneling}  If $\phi: \HEG \rightarrow *_{j\in J}H_j$ is a homomorphism to a free product, then for some $n\in \omega$ and $j\in J$ the image $\phi(\HEG^n)$ lies in a conjugate of $H_j$.
\end{lemma}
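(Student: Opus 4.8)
The plan is to invoke Bass--Serre theory. Write $H=*_{j\in J}H_j$ and let $T$ be the Bass--Serre tree on which $H$ acts with \emph{trivial} edge stabilizers and with nontrivial vertex stabilizers exactly the conjugates $gH_jg^{-1}$ of the factors (the remaining vertices, coming from the central vertex of the associated star of groups, have trivial stabilizer). A subgroup $K\leq H$ fixes a vertex of $T$ if and only if $K$ is trivial or $K\leq gH_jg^{-1}$ for some $g\in H$ and $j\in J$; in either case $K$ lies in a conjugate of a factor. Thus the lemma is equivalent to the assertion that $\phi(\HEG^n)$ fixes a vertex of $T$ for some $n\in\omega$, and I would argue by contradiction, assuming that $\phi(\HEG^n)$ fixes no vertex for every $n$.

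First I would record the dichotomy supplied by trivial edge stabilizers. By the standard theory of group actions on trees, a subgroup all of whose elements are elliptic fixes a vertex or fixes an end; but if such a subgroup fixed an end $\xi$ and no vertex it would be the ascending union of the groups $K_n=\bigcap_{m\geq n}\operatorname{Stab}(u_m)$ taken along a ray $(u_0,u_1,\dots)$ toward $\xi$, and each $K_n$ lies in $\operatorname{Stab}(u_n)\cap\operatorname{Stab}(u_{n+1})$, the stabilizer of the edge $u_nu_{n+1}$, which is trivial. Hence a subgroup consisting entirely of elliptic elements must fix a vertex, and so any subgroup of $H$ that fixes no vertex contains a hyperbolic isometry. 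Under the contradiction hypothesis, for each $n$ the group $\phi(\HEG^n)$ therefore contains a hyperbolic $h_n$ with an axis and positive translation length.

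Next I would exploit the feature distinguishing $\HEG$ from a free group, namely infinite concatenation. Since $\HEG^{m}$ (the elements having a representative using only letters of index $\geq m$) is isomorphic to $\HEG$ under the index shift, I may choose for each $n$ a preimage $w_n\in\HEG^{m_n}$ of a hyperbolic in $\phi(\HEG^{m_n})$, with $m_0<m_1<\cdots$. Because each $w_n$ involves only letters of index $\geq m_n$ and $m_n\to\infty$, for every fixed index only finitely many of the $w_n$ involve it, so the infinite product $W\equiv\prod_{n\in\omega}w_n^{k_n}$ is a legitimate element of $\HEG$ for any exponents $k_n\in\mathbb{Z}$, and for each $m$ one has the factorization $\phi([W])=\phi([w_0^{k_0}])\cdots\phi([w_{m-1}^{k_{m-1}}])\cdot\phi\bigl(\bigl[\prod_{n\geq m}w_n^{k_n}\bigr]\bigr)$, the tail factor lying in $\phi(\HEG^{m_m})$. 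Fixing a vertex $v_0$, I would then set up a nested ping-pong: conjugating the $h_n$ into coherent position and taking the exponents $k_n$ large, I would produce a strictly decreasing sequence of half-trees (components of $T$ minus an edge) $U_0\supsetneq U_1\supsetneq\cdots$ with $v_0\notin U_0$ and $d(v_0,U_n)\to\infty$, arranged so that the factorizations above force $\phi([W])\cdot v_0\in U_n$ for every $n$. This yields $d\bigl(v_0,\phi([W])\cdot v_0\bigr)=\infty$, absurd since $\phi([W])\cdot v_0$ is a single vertex.

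The main obstacle is precisely the construction of this nested ping-pong configuration, that is, controlling cancellation in the free product across infinitely many factors. The hyperbolic elements $h_n$ are obtained abstractly as members of $\phi(\HEG^{m_n})$, and their axes have no a priori coherent location in $T$; one must simultaneously choose the representatives $w_n$, conjugate the $h_n$ so their axes emanate coherently from an expanding neighborhood of $v_0$, and select the exponents $k_n$ large enough that the half-trees genuinely nest and the partial products march monotonically toward a single end. Carrying out these infinitely many choices compatibly, while keeping each $w_n$ inside an ever-deeper tail $\HEG^{m_n}$ so that $W$ remains a word, is the crux; it is here that the reduced-word combinatorics of Lemma \ref{reduced} and the free-product normal form must be brought to bear to guarantee that the displacements add rather than cancel.
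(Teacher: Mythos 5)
Your proposal should first be measured against what the paper actually does: the paper does not prove Lemma \ref{funneling} at all, but quotes it as a special case of Eda's atomic property theorem \cite[Theorem 1.3]{E3}, which is a substantial stand-alone result proved by delicate word-combinatorial arguments. So you are attempting to reprove that theorem from scratch, and your sketch stops exactly where its difficulty begins. The parts you do carry out are correct: the Bass--Serre tree of $*_{j\in J}H_j$ has trivial edge stabilizers and its nontrivial vertex stabilizers are precisely the conjugates of the factors; with trivial edge stabilizers, an elliptic-only subgroup fixing an end but no vertex would be an increasing union of subgroups of edge stabilizers, hence trivial, so a subgroup fixing no vertex must contain a hyperbolic isometry; and the infinite concatenation $W\equiv\prod_{n}w_n^{k_n}$ is a legitimate word because $w_n\in\HEG^{m_n}$ with $m_n\to\infty$. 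All of that is a sound reduction.

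The gap is the ``nested ping-pong,'' and it is not a deferrable verification: it is blocked by a circularity that your last paragraph names but does not resolve. Write $h_n=\phi([w_n])$ and $g_m=\phi\bigl(\bigl[\prod_{n\geq m}w_n^{k_n}\bigr]\bigr)$, so that $\phi([W])=h_0^{k_0}\cdots h_{m-1}^{k_{m-1}}g_m$ for every $m$. To trap $\phi([W])\cdot v_0$ in your half-tree $U_m$ you must control the vertex $g_m\cdot v_0$; but $g_m$ is not a small perturbation --- it is an arbitrary element of $\phi(\HEG^{m_m})$ whose displacement of $v_0$ depends on every exponent $k_n$ with $n\geq m$, none of which has been chosen at the time the trapping condition for $U_m$ must be arranged. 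Enlarging later exponents (to make later stages work) makes the earlier tails $g_m$ wilder, so no recursive choice of the $k_n$, however fast-growing, closes the loop; the inequality $d(v_0,\phi([W])v_0)\geq d\bigl(v_0,h_0^{k_0}\cdots h_{m-1}^{k_{m-1}}v_0\bigr)-d(v_0,g_mv_0)$ is useless without an a priori bound on $d(v_0,g_mv_0)$, and there is none. The known proofs do not fight this by ping-pong: they exploit the full system of identities $g_m=h_m^{k_m}g_{m+1}$, holding for all $m$ simultaneously, and derive a contradiction from normal-form and length considerations in the free product (an infinite-descent on syllable structure of the tails), which is essentially the content of \cite[Theorem 1.3]{E3}. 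Until you supply an argument of that kind, what you have is a correct reformulation plus a restatement of the theorem's actual content as an unproved claim.
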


\noindent Consequently we obtain:

\begin{lemma}\label{hagfact3}  If $\phi: \HAG \rightarrow *_{j\in J}H_j$ then for some $j\in J$ the image $\phi(\HAG)$ is a subgroup of a conjugate of $H_j$.
\end{lemma}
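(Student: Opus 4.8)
The plan is to reduce the statement to Lemma \ref{funneling} by precomposing with the quotient map $\pi: \HEG \rightarrow \HAG$. Given a homomorphism $\phi: \HAG \rightarrow *_{j\in J}H_j$, I set $\Phi = \phi \circ \pi: \HEG \rightarrow *_{j\in J}H_j$. Since $\Phi$ is then a homomorphism from $\HEG$ into a free product, Lemma \ref{funneling} applies to it directly, and this is where all of the real work is already done for us.

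Applying Lemma \ref{funneling} to $\Phi$ yields some $n\in \omega$ and some $j\in J$ such that $\Phi(\HEG^n)$ is contained in a conjugate of $H_j$. The remaining task is to recognize that this subgroup is exactly the full image $\phi(\HAG)$, rather than merely the image of some proper retract subgroup. This is precisely where the first claim of Lemma \ref{hagfact1} is used: for every $n\in \omega$ we have $\pi(\HEG^n) = \HAG$. Consequently
\[
\Phi(\HEG^n) = \phi\bigl(\pi(\HEG^n)\bigr) = \phi(\HAG),
\]
and so $\phi(\HAG)$ lies in a conjugate of $H_j$, as required.

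I do not anticipate a genuine obstacle here; the substance of the argument is packaged entirely into Lemmas \ref{funneling} and \ref{hagfact1}. The only point that needs care is conceptual rather than technical: the funneling lemma controls only the images of the subgroups $\HEG^n$, and a priori this is weaker than controlling the image of all of $\HEG$. The saving feature special to $\HAG$ is that $\pi$ collapses the free part $\HEG_n$, so that the image of each $\HEG^n$ already surjects onto all of $\HAG$. This is what upgrades the funneling conclusion from ``$\Phi$ funnels on a tail'' to ``$\phi(\HAG)$ sits inside a single conjugate $H_j^{g}$,'' giving the clean statement of the lemma.
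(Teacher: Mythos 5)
Your proof is correct and is exactly the argument the paper intends when it derives Lemma \ref{hagfact3} from Lemma \ref{funneling} with the word ``Consequently'': precompose with $\pi$, funnel the image of $\HEG^n$ into a conjugate of some $H_j$, and use $\pi(\HEG^n)=\HAG$ from Lemma \ref{hagfact1} to upgrade this to the full image $\phi(\HAG)$. No gaps; you have simply written out the one-line derivation the paper leaves implicit.
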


Finally, we note that extending the set $\{a_n^{\pm 1}\}_{n\in \omega}$ to $\{a_n^{\pm 1}, b_n^{\pm 1}, c_n^{\pm 1}\}_{n\in \omega}$, we analogously define the set $\W_{a, b, c}$ of words, the set of reduced words $\Red_{a, b, c}\subseteq \W_{a, b, c}$, the Hawaiian earring group $\HEG_{a, b, c}$ and the harmonic archipelago group $\HAG_{a, b, c}$.  Using the bijection $\Gamma: \{a_n^{\pm 1}\}_{n\in \omega}\rightarrow \{a_n^{\pm 1}, b_n^{\pm 1}, c_n^{\pm 1}\}_{n\in \omega}$ given by

\begin{center}
$\Gamma(a_n^{\pm 1}) \mapsto \begin{cases} a_{m}^{\pm 1}$ if $n = 3m\\b_m^{\pm 1}$ if $n = 3m +1\\ c_n^{\pm 1}$ if $n = 3m +2\end{cases}$
\end{center}

\noindent the elements of $\W$ are placed in bijection with those of $\W_{a, b, c}$ by letting $\Gamma(W) = \prod_{i\in \overline{W}}\Gamma(W(i))$.  This bijection satisfies $\Gamma(\Red) = \Red_{a, b, c}$ and induces an isomorphism $\HEG \simeq \HEG_{a, b, c}$ where the latter group is defined analogously to the former.  This isomorphism also descends to an isomorphism $\HAG \simeq \HAG_{a, b, c}$.  By deleting the elements of $\{b_n^{\pm 1}, c_n^{\pm1}\}_{n\in \omega}$ we obtain retractions $r_a: \W_{a, b, c} \rightarrow \W$, $r_a: \HEG_{a, b, c} \rightarrow \HEG$, and $r_a: \HAG_{a, b, c} \rightarrow \HAG$.    We can similarly define the set of words $\W_{b, c} \subseteq \W_{a, b, c}$ which have range disjoint from $\{a_n^{\pm 1}\}_{n\in \omega}$, the set of reduced words $\Red_{b, c} = \W_{b, c}\cap \Red_{a, b, c}$, an isomorph of the Hawaiian earring group $\HEG_{b, c} \leq \HEG_{a, b, c}$ and an isomorph of the harmonic archipelago group $\HAG_{b, c}\leq \HAG_{a, b, c}$.  The inclusion $\HAG_{b, c} \leq \ker(r_a)$ holds.
\end{section}

\begin{section}{Theorems \ref{manymanyhom} and \ref{subgroups}}\label{manyhom}
We begin by stating and proving the abelian version of Theorem \ref{manymanyhom}.

\begin{theorem}\label{abelianhomomorphims}  If $A$ is an abelian group with $|A| <2^{\aleph_0}$ then 

\begin{center}
$|\Hom(\prod_{\omega}\mathbb{Z}, A)| = \begin{cases}|A|$ if $A$ is slender$\\2^{2^{\aleph_0}}$ if $A$ is not slender$\end{cases}$

\end{center}

\end{theorem}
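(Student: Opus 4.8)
The plan is to establish matching upper and lower bounds on $|\Hom(\prod_{\omega}\mathbb{Z}, A)|$ in each case. The upper bound holds unconditionally: a homomorphism is in particular a function, so $|\Hom(\prod_{\omega}\mathbb{Z}, A)| \leq |A|^{|\prod_{\omega}\mathbb{Z}|} = |A|^{2^{\aleph_0}} \leq (2^{\aleph_0})^{2^{\aleph_0}} = 2^{2^{\aleph_0}}$. When $A$ is slender, the definition says every $\phi$ satisfies $\phi \circ p_n = \phi$ for some $n$, so $\phi$ is determined by its restriction to the image of $p_n$, a free abelian group of rank $n$; hence the homomorphisms factoring through $p_n$ number $|\Hom(\mathbb{Z}^n, A)| = |A|^n$, and summing over $n \in \omega$ gives at most $|A|$ homomorphisms (a slender group is torsion-free, so if finite it is trivial and the count is $1 = |A|$). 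The reverse inequality $|\Hom(\prod_{\omega}\mathbb{Z}, A)| \geq |A|$ comes from precomposing the $|A|$ homomorphisms $\mathbb{Z} \to A$ with the projection onto the first coordinate. This settles the slender case.

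It remains to show $|\Hom(\prod_{\omega}\mathbb{Z}, A)| = 2^{2^{\aleph_0}}$ when $A$ is not slender; by the upper bound it suffices to produce $2^{2^{\aleph_0}}$ distinct homomorphisms. Here I would invoke Nunke's characterization \cite{Nu}: a group failing to be slender either has a torsion element or contains one of $\mathbb{Q}$, $\prod_{\omega}\mathbb{Z}$, or a group of $p$-adic integers as a subgroup. Since $|A| < 2^{\aleph_0}$ while both $\prod_{\omega}\mathbb{Z}$ and the $p$-adic integers have cardinality exactly $2^{\aleph_0}$, the latter two possibilities are excluded. Thus either $A$ contains an element of prime order $p$, hence a subgroup $B \cong \mathbb{Z}/p\mathbb{Z}$, or $A$ is torsion-free and contains a subgroup $B \cong \mathbb{Q}$. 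Because $B \hookrightarrow A$ is injective, postcomposition gives an injection $\Hom(\prod_{\omega}\mathbb{Z}, B) \hookrightarrow \Hom(\prod_{\omega}\mathbb{Z}, A)$, so it is enough to show $|\Hom(\prod_{\omega}\mathbb{Z}, B)| = 2^{2^{\aleph_0}}$ for $B \in \{\mathbb{Z}/p\mathbb{Z}, \mathbb{Q}\}$.

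Each of these two computations is an exercise in linear duality over a field. For $B = \mathbb{Z}/p\mathbb{Z}$, every homomorphism kills $p(\prod_{\omega}\mathbb{Z})$ and so factors through the $\mathbb{F}_p$-vector space $V = (\prod_{\omega}\mathbb{Z})/p(\prod_{\omega}\mathbb{Z}) \cong \prod_{\omega}(\mathbb{Z}/p\mathbb{Z})$; this $V$ has cardinality $2^{\aleph_0}$ and hence $\mathbb{F}_p$-dimension $2^{\aleph_0}$, so $\Hom(\prod_{\omega}\mathbb{Z}, \mathbb{Z}/p\mathbb{Z})$ is the full dual of $V$, of cardinality $p^{2^{\aleph_0}} = 2^{2^{\aleph_0}}$. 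For $B = \mathbb{Q}$ I would use $\Hom_{\mathbb{Z}}(\prod_{\omega}\mathbb{Z}, \mathbb{Q}) \cong \Hom_{\mathbb{Q}}((\prod_{\omega}\mathbb{Z})\otimes\mathbb{Q}, \mathbb{Q})$, the dual of the $\mathbb{Q}$-vector space $(\prod_{\omega}\mathbb{Z})\otimes\mathbb{Q}$, whose dimension equals the torsion-free rank of $\prod_{\omega}\mathbb{Z}$. The one genuinely substantive point, and the step I expect to require the most care, is verifying that this rank is $2^{\aleph_0}$: I would fix an almost disjoint family $\{S_\alpha\}_{\alpha < 2^{\aleph_0}}$ of infinite subsets of $\omega$ and check that their characteristic sequences are $\mathbb{Q}$-independent, since on a cofinite part of each $S_\alpha$ every other characteristic sequence vanishes, forcing all coefficients in a vanishing combination to be zero. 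Granting this, the dual has cardinality $\aleph_0^{2^{\aleph_0}} = 2^{2^{\aleph_0}}$, and distinct functionals restrict to distinct homomorphisms because the image of $\prod_{\omega}\mathbb{Z}$ spans the tensor product over $\mathbb{Q}$. In both cases this furnishes the required $2^{2^{\aleph_0}}$ homomorphisms, completing the non-slender case.
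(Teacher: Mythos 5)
Your proof is correct and follows essentially the same route as the paper's: the slender case is counted by factoring through the retractions $p_n$, and the non-slender case uses $|A|<2^{\aleph_0}$ to reduce to a subgroup isomorphic to $\mathbb{Q}$ or $\mathbb{Z}/p$, from which $2^{2^{\aleph_0}}$ homomorphisms are produced by vector-space duality. The only cosmetic differences are that the paper cites S\c asiada rather than Nunke for the reduction step, and in the $\mathbb{Q}$ case the paper extends $2^{2^{\aleph_0}}$ homomorphisms off a rank-$2^{\aleph_0}$ free subgroup of $\prod_{\omega}\mathbb{Z}$ using injectivity of $\mathbb{Q}$, where you instead dualize $(\prod_{\omega}\mathbb{Z})\otimes\mathbb{Q}$ --- the same idea in different clothing.
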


\begin{proof}  Suppose that $A$ is slender.  If $A$ is the trivial group then there is exactly one homomorphism from $\prod_{\omega}\mathbb{Z}$ to $A$.  If $A$ is nontrivial then $A$ is infinite, torsion-free.  Since $A$ is slender we see that $\Hom(\prod_{\omega}\mathbb{Z}, A) = \bigcup_{n\in \omega} \{\phi:\prod_{\omega} \mathbb{Z} \rightarrow A\mid \phi\circ p_n = \phi\}$.  The set $\{\phi:\prod_{\omega} \mathbb{Z} \rightarrow A\mid \phi\circ p_1 = \phi\}$ has cardinality exactly $|A|$, and since $A$ is infinite it is in fact true that for each $n\in \omega$ we have $\{\phi:\prod_{\omega} \mathbb{Z} \rightarrow A\mid \phi\circ p_n = \phi\}$ is of cardinality $|A|$.  Then $\Hom(\prod_{\omega}\mathbb{Z}, A)$ has cardinality $|A|$ in this case as well.

Suppose that $A$ is not slender.  By \cite{Sa} we know that $A$ must contain an isomorphic copy of $\mathbb{Q}$ or of the cyclic group $\mathbb{Z}/p$ for some prime $p$.  If $\mathbb{Q} \leq A$, we take a subgroup $F\leq \prod_{\omega}\mathbb{Z}$ which is a free abelian group of rank $2^{\aleph_0}$.  The construction of such an $F$ follows a straightforward induction.  There are $2^{2^{\aleph_0}}$ distinct homomorphisms from $F$ to $\mathbb{Q}$, and since $\mathbb{Q}$ is an injective $\mathbb{Z}$-module, each of these homomorphisms may be extended to $\prod_{\omega}\mathbb{Z}$.  Thus in this case there are at least $2^{2^{\aleph_0}}$ homomorphisms from $\prod_{\omega}\mathbb{Z}$ to $A$, and since $|A| <2^{\aleph_0}$ we have precisely $2^{2^{\aleph_0}}$ homomorphisms.

If $\mathbb{Z}/p \leq A$ then we use the epimorphism $\epsilon:\prod_{\omega}\mathbb{Z}\rightarrow \prod_{\omega}(\mathbb{Z}/p)$ and notice that $\prod_{\omega}(\mathbb{Z}/p)$ is a vector space over the field $\mathbb{Z}/p$, so by selecting a basis we obtain a group isomorphism $\prod_{\omega}(\mathbb{Z}/p)\simeq \bigoplus_{2^{\aleph_0}}(\mathbb{Z}/p)$.  For each $\mathcal{S} \subseteq 2^{\aleph_0}$ we get a homomorphism $\epsilon_{\mathcal{S}}: \bigoplus_{2^{\aleph_0}}(\mathbb{Z}/p) \rightarrow \mathbb{Z}/p$ given by taking the sum of the $\mathcal{S}$ coordinates.  Each such $\epsilon_{\mathcal{S}}$ is a distinct homomorphism and so each composition $\epsilon_{\mathcal{S}} \circ \epsilon: \prod_{\omega}\mathbb{Z}\rightarrow \mathbb{Z}/p$ is distinct.  Thus there exist at least $2^{2^{\aleph_0}}$ homomorphisms from  $\prod_{\omega}\mathbb{Z}$ to $A$, and again by $|A| < 2^{\aleph_0}$ we see that there are exactly $2^{2^{\aleph_0}}$ homomorphisms.
\end{proof}

Next, we prove Theorem \ref{manymanyhom} modulo a proposition which proves the existence of many homomorphisms.

\begin{A}  If $G$ is a group with $|G| <2^{\aleph_0}$ then 

\begin{center}
$|\Hom(\HEG, G)| = \begin{cases}|G|$ if $G$ is n-slender$\\2^{2^{\aleph_0}}$ if $G$ is not n-slender$\end{cases}$

\end{center}

\end{A}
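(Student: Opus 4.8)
The plan is to establish the dichotomy by handling the two cases separately, with all of the difficulty concentrated in producing $2^{2^{\aleph_0}}$ homomorphisms when $G$ is not n-slender. First suppose $G$ is n-slender. If $G$ is trivial there is a unique homomorphism, matching $|G|=1$, so assume $G$ is nontrivial; being n-slender it is torsion-free and hence infinite. By definition $\Hom(\HEG,G)=\bigcup_{n\in\omega}\{\phi:\phi\circ p_n=\phi\}$, and since $p_n\circ p_n=p_n$ and $p_n\upharpoonright\HEG_n$ is the identity, the map $\phi\mapsto\phi\upharpoonright\HEG_n$ is a bijection of $\{\phi:\phi\circ p_n=\phi\}$ onto $\Hom(\HEG_n,G)$; as $\HEG_n$ is free of rank $n$ this set has size $|G|^n=|G|$. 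Hence $|\Hom(\HEG,G)|\le\aleph_0\cdot|G|=|G|$, and the distinct maps $\rho_g\circ p_1$ (with $\rho_g\colon\HEG_1\to G$ sending the generator to $g$) give the reverse inequality, so the count is $|G|$.

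Now suppose $G$ is not n-slender, so $2\le|G|<2^{\aleph_0}$. Since $|\HEG|=2^{\aleph_0}$ we get the upper bound $|\Hom(\HEG,G)|\le|G|^{2^{\aleph_0}}\le(2^{\aleph_0})^{2^{\aleph_0}}=2^{2^{\aleph_0}}$, so everything rests on the matching lower bound. By Lemma \ref{hagfact2} there is a nontrivial homomorphism $\psi\colon\HAG\to G$, and because the quotient map $\pi\colon\HEG\to\HAG$ is onto, distinct homomorphisms $\HAG\to G$ pull back to distinct homomorphisms $\HEG\to G$. It therefore suffices to produce $2^{2^{\aleph_0}}$ homomorphisms from $\HAG$ to $G$.

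For this I would invoke Theorem \ref{subgroups}. Realizing the guaranteed copy of $S_{2^{\aleph_0}}$, call it $\Sigma$, as permutations of a continuum-sized family $X=\{x_t\}_{t\in T}\subseteq\HAG$, so that $\alpha_\sigma(x_t)=x_{\sigma(t)}$, I consider the precompositions $\psi\circ\alpha_\sigma$. Their number is the index $[\Sigma:\operatorname{Stab}(\psi)]$ of the stabilizer $\operatorname{Stab}(\psi)=\{\sigma:\psi\circ\alpha_\sigma=\psi\}$, so it suffices to make this index $2^{2^{\aleph_0}}$. Writing $h\colon T\to G$ for $h(t)=\psi(x_t)$, evaluation on $X$ shows $\operatorname{Stab}(\psi)\subseteq\{\sigma:h\circ\sigma=h\}=\operatorname{Stab}_\Sigma(h)$, whence the number of distinct maps is at least the size of the $\Sigma$-orbit of $h$. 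If $h$ attains two distinct values $c_1,c_2$ each on a set of size $2^{\aleph_0}$, then permutations supported on $h^{-1}(c_1)\cup h^{-1}(c_2)$ already carry $h$ to every balanced two-valued function on that set, of which there are $2^{2^{\aleph_0}}$; the orbit is thus maximal and we obtain the required $2^{2^{\aleph_0}}$ distinct homomorphisms $\psi\circ\alpha_\sigma\circ\pi$.

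The hard part, and the reason this is naturally isolated as a separate proposition, is the degenerate case in which $h=\psi\upharpoonright X$ is concentrated on a single value off a set of size $<2^{\aleph_0}$; then the crude bound through $\operatorname{Stab}_\Sigma(h)$ collapses and one must analyze the stabilizer through the action of $\Sigma$ on all of $\HAG$. The expected route is the small index property for symmetric groups on infinite sets, by which a subgroup of $\Sigma$ of index $<2^{2^{\aleph_0}}$ contains the pointwise stabilizer of some $\Delta\subseteq T$ with $|\Delta|<2^{\aleph_0}$; a small orbit would then force $\psi$ to be invariant under every permutation fixing $\Delta$, a severe rigidity that ought to confine $\psi(\HAG)$ to a very small (in the extreme, cyclic) subgroup. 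Since $\HAG$ admits no nontrivial homomorphism to $\mathbb{Z}$ (every homomorphism $\HEG\to\mathbb{Z}$ kills each finite word and so factors trivially through $\HAG$), such a subgroup would have to contain torsion, at which point a copy of $\mathbb{Z}/p$ in $G$ together with the surjection $\HEG\twoheadrightarrow\prod_{\omega}\mathbb{Z}$ and Theorem \ref{abelianhomomorphims} would supply $2^{2^{\aleph_0}}$ homomorphisms directly. Making the rigidity step precise---pinning down exactly how invariance under $\operatorname{Stab}_\Sigma(\Delta)$ restricts $\psi$, and arranging $X$ so that the argument closes---is where I expect the genuine work to lie.
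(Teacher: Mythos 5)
Your n-slender case and your reduction of the non-n-slender case to producing $2^{2^{\aleph_0}}$ homomorphisms from $\HAG$ to $G$ (Lemma \ref{hagfact2} plus surjectivity of $\pi$) are exactly the paper's steps. The gap is in the lower bound, and the case you set aside as ``degenerate'' is in fact the central case, not an edge case. The copy of $S_{2^{\aleph_0}}$ in $\Aut(\HAG)$ provided by Theorem \ref{subgroups} consists of the maps $\phi_\sigma$ that permute the classes $[[U_S]]$, $S\in\Sigma$, of words built from an almost disjoint family, and all of these classes lie in $\HAG_{b,c}$. But the homomorphism that Lemmas \ref{hagfact2} and \ref{nicehom} actually give you has the form $\phi'\circ r_a$, i.e.\ it kills $\HAG_{b,c}$ and hence kills every $[[U_S]]$ at once, so your function $h$ is constant. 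Worse: since each $\phi_\sigma$ only rewrites maximal $\Sigma$-subwords (which consist of $b,c$-letters) into other $b,c$-letter words, one has $r_a\circ\phi_\sigma=r_a$, hence $(\phi'\circ r_a)\circ\phi_\sigma=\phi'\circ r_a$ for \emph{every} $\sigma$. So this $\psi$ has singleton orbit under your entire group of automorphisms, even though its image $\psi(\HAG)$ is an arbitrary nontrivial subgroup of $G$. This both shows your orbit bound yields nothing here and refutes the rigidity principle your rescue plan relies on: invariance under all of $S_{2^{\aleph_0}}$ does not confine the image to anything small, cyclic, or torsion. Your torsion fallback (via $\prod_\omega\mathbb{Z}$ and Theorem \ref{abelianhomomorphims}) is fine but covers only groups with torsion; torsion-free non-n-slender groups, e.g.\ any $G$ containing $\mathbb{Q}$ such as the amalgam of Example \ref{strictinclusion}, are left unproved by your outline.

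The idea you are missing is that one must precompose with \emph{non-injective endomorphisms} of $\HAG_{a,b,c}$ rather than automorphisms. For each subset $\mathcal{S}\subseteq\Sigma$ the paper takes the collapsing map $f_{\mathcal{S}}\colon\Sigma\rightarrow\Sigma\cup\{T\}$ sending $S\mapsto T$ if $S\in\mathcal{S}$ and $S\mapsto S$ otherwise, and shows via the word-decomposition machinery (Lemmas \ref{existenceofmaximal} through \ref{homomorphism}) that it induces an endomorphism $\phi_{f_{\mathcal{S}}}$ with $\phi_{f_{\mathcal{S}}}([[U_S]])=[[U_T]]$ for $S\in\mathcal{S}$ and $\phi_{f_{\mathcal{S}}}([[U_S]])=[[U_S]]$ otherwise, where $U_T\equiv a_0a_1a_2\cdots$. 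Since Lemma \ref{nicehom} arranges $\phi$ to be nontrivial on $[[U_T]]$ while trivial on $\HAG_{b,c}$, one gets $[[U_S]]\in\ker(\phi\circ\phi_{f_{\mathcal{S}}})$ if and only if $S\notin\mathcal{S}$, so the $2^{2^{\aleph_0}}$ choices of $\mathcal{S}$ give that many distinct homomorphisms; this is Proposition \ref{inforapenny}. The point is that collapsing endomorphisms turn ``which $U_S$ lie in the kernel'' into data you choose freely, whereas precomposition by automorphisms can only permute the kernel within itself and can never distinguish maps on elements the original homomorphism kills.
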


\begin{proof}   Suppose $G$ is n-slender and $|G|<2^{\aleph_0}$.  If $G$ is trivial then there is exactly one homomorphism from $\HEG$ to $G$.  If $G$ is nontrivial then $G$ is infinite and $\Hom(\HEG, G) = \bigcup_{n\in\omega}\{\phi: \HEG\rightarrow G\mid \phi\circ p_n = \phi\}$.  Moreover since $\HEG \simeq \HEG_n *\HEG^n$ for all $n\in\omega$ and $\HEG_n$ is a free group of rank $n$, we get $\Hom(\HEG, G)$ as a countable union of sets of cardinality $|G|$.  Thus $|\Hom(\HEG, G)| = |G|$ in either case.

Suppose $G$ is not n-slender and $|G|<2^{\aleph_0}$.  By Lemma \ref{hagfact2} there exists a nontrivial homomorphism from $\HAG$ to $G$.  By Proposition \ref{inforapenny} there are at least $2^{2^{\aleph_0}}$ homomorphisms from $\HAG$ to $G$ and precomposing these homomorphisms with the surjective map $\pi: \HEG \rightarrow \HAG$ we obtain at least $2^{2^{\aleph_0}}$ homomorphisms from $\HEG$ to $G$.  Since $|\HEG| = 2^{\aleph_0}$ and $|G|<2^{\aleph_0}$ we get $|\Hom(\HEG, G)| = 2^{2^{\aleph_0}}$.
\end{proof}

\begin{proposition}\label{inforapenny}  If $|\Hom(\HAG, G)| >1$ then $|\Hom(\HAG, G)| \geq 2^{2^{\aleph_0}}$.
\end{proposition}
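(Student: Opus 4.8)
The plan is to show that a single nontrivial homomorphism $\phi_0 \colon \HAG \to G$ can be amplified into $2^{2^{\aleph_0}}$ distinct homomorphisms by exploiting the enormous automorphism group of $\HAG$ promised in Theorem \ref{subgroups}. The starting observation is that a nontrivial homomorphism $\phi_0$ must have $\phi_0([[W]]) \neq 1_G$ for some infinitary word $W$, and because finite subwords die in $\HAG$, this nontriviality is witnessed by an element supported on infinitely many letters. The idea is that $\HAG$ contains continuum-many ``independent copies'' of itself arranged along a set of size $2^{\aleph_0}$, so that permuting these copies gives an action of $S_{2^{\aleph_0}}$ by automorphisms; precomposing $\phi_0$ with these automorphisms should yield many distinct maps.

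First I would use the machinery from Section \ref{Prelim}, particularly the isomorphism $\HAG \simeq \HAG_{a,b,c}$ and the retraction $r_a \colon \HAG_{a,b,c} \to \HAG$, to set up a supply of commuting-or-independent subcopies of $\HAG$ indexed by a set of cardinality continuum. Concretely, one partitions the alphabet into continuum-many infinite blocks and produces, for each subset $\mathcal{S} \subseteq 2^{\aleph_0}$, a homomorphism $\HAG \to G$ built by routing the nontrivial element $\phi_0([[W]])$ through exactly the blocks indexed by $\mathcal{S}$. Here Theorem \ref{subgroups} does the heavy lifting: since $\Aut(\HAG)$ contains $S_{2^{\aleph_0}}$, we have a faithful action of this symmetric group, and the $2^{2^{\aleph_0}}$-many subgroups (or the $2^{2^{\aleph_0}}$-many orbits of configurations) give us that many candidate homomorphisms $\phi_0 \circ \sigma$ for $\sigma \in S_{2^{\aleph_0}}$.

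The crux is the \emph{injectivity} step: I must verify that distinct choices (distinct $\sigma$, or distinct $\mathcal{S}$) genuinely produce distinct homomorphisms into $G$, rather than collapsing. This is where Lemma \ref{hagfact3} becomes essential — it constrains how $\HAG$ can map into a free product and pins down the image inside a single conjugate of a factor, which lets one detect where the nontrivial value $\phi_0([[W]])$ lands. The natural strategy is to find, for each pair of distinct parameters, a specific element of $\HAG$ on which the two homomorphisms disagree, using the independence of the blocks together with the fact that $\phi_0$ was nontrivial on a genuinely infinitary element. Once distinctness is established, the count $2^{2^{\aleph_0}}$ follows immediately from the cardinality of $S_{2^{\aleph_0}}$ (equivalently, of the power set of a continuum-sized set), and the reverse inequality $|\Hom(\HAG, G)| \leq 2^{2^{\aleph_0}}$ is automatic since $|\HAG| = 2^{\aleph_0}$ and $|G| < 2^{\aleph_0}$ force at most $|G|^{|\HAG|} = 2^{2^{\aleph_0}}$ maps.

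I expect the main obstacle to be precisely this distinctness verification. The difficulty flagged in the introduction — that one cannot merely precompose with endomorphisms of $\HEG$, which number only $2^{\aleph_0}$ — means the argument must genuinely use the \emph{much larger} automorphism group of the quotient $\HAG$, and the delicate point is ensuring that the $S_{2^{\aleph_0}}$-action remains faithful \emph{after} composing with the fixed map $\phi_0$. I would anticipate needing a careful choice of $W$ (and of the indexing of blocks) so that the symmetric group acts on a set of ``markers'' that $\phi_0$ can distinguish inside $G$, perhaps by arranging the relevant images to lie in distinct conjugates of a free factor via Lemma \ref{hagfact3}, so that a permutation moving a marker provably changes the value of the composite homomorphism on a detectable element.
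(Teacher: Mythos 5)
Your proposal has a genuine gap at exactly the point you yourself flag as the crux: distinctness. Precomposing the given $\phi_0$ with automorphisms coming from a copy of $S_{2^{\aleph_0}}$ inside $\Aut(\HAG)$ cannot work in general, because faithfulness of the action on $\HAG$ says nothing about the stabilizer of $\phi_0$ under precomposition, and that stabilizer can be everything. This is visible in the paper's own setup: after normalizing $\phi_0$ (Lemma \ref{nicehom}) to a map $\phi = \phi' \circ r_a$ on $\HAG_{a,b,c}$ with $\HAG_{b,c} \leq \ker(\phi)$ and $\phi([[a_0a_1a_2\cdots]]) \neq 1_G$, the permutation automorphisms $\phi_{\sigma}$ (induced by bijections $\sigma: \Sigma \rightarrow \Sigma$) only rewrite words inside the $b,c$-part, so $r_a \circ \phi_{\sigma} = r_a$ and hence $\phi \circ \phi_{\sigma} = \phi$ for \emph{every} $\sigma$: the entire symmetric group collapses to a single composite homomorphism. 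For an arbitrary un-normalized $\phi_0$ you have no control at all, and Lemma \ref{hagfact3} cannot rescue the argument since $G$ here is an arbitrary group with no free-product structure to exploit. A secondary but real problem: you cannot ``partition the alphabet into continuum-many infinite blocks,'' because the alphabet is countable; what the paper uses instead is an almost disjoint family $\Sigma$ of $2^{\aleph_0}$ infinite subsets of $\omega$ with pairwise finite intersections, and this combinatorial input is missing from your sketch.

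The paper's actual mechanism is precomposition not by automorphisms but by a family of \emph{non-injective} substitution endomorphisms. For each $\mathcal{S} \subseteq \Sigma$ one takes $f_{\mathcal{S}}: \Sigma \rightarrow \Sigma \cup \{T\}$ fixing each $S \notin \mathcal{S}$ and sending each $S \in \mathcal{S}$ to the extra symbol $T$, and builds (via the decomposition machinery of Lemmas \ref{existenceofmaximal} through \ref{homomorphism}) an endomorphism $\phi_{f_{\mathcal{S}}}$ of $\HAG_{a,b,c}$ rewriting each maximal occurrence of $U_{S,n}$ as $U_{f_{\mathcal{S}}(S),n}$. The asymmetry your approach lacks is that each $U_S$ is a $b,c$-word, hence killed by the normalized $\phi$, while $U_T \equiv a_0a_1a_2\cdots$ is not; therefore $[[U_S]] \in \ker(\phi \circ \phi_{f_{\mathcal{S}}})$ if and only if $S \notin \mathcal{S}$, so distinct subsets $\mathcal{S}$ give distinct composites, and there are $2^{2^{\aleph_0}}$ of them. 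Incidentally, the logical order in the paper is the reverse of yours: Theorem \ref{subgroups} is proved \emph{after} Proposition \ref{inforapenny}, from this same substitution machinery, so it cannot serve as the engine of the proposition's proof.
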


We prove Proposition \ref{inforapenny} after a sequence of lemmas.

\begin{lemma}\label{nicehom}  If $\phi_0: \HAG \rightarrow G$ is a nontrivial homomorphism there exists a homomorphism $\phi: \HAG_{a, b, c} \rightarrow G$ such that $\phi([[a_0a_1a_2\cdots]])\neq 1_G$ and $\HAG_{b, c}\leq \ker(\phi)$.
\end{lemma}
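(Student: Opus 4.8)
The plan is to build $\phi$ by precomposing $\phi_0$ (pulled back along $\pi$) with a letter substitution that sends every $b_n$ and $c_n$ to the empty word and smears a single nontrivial value of $\phi_0$ across the standard infinite $a$-word. Concretely I will produce words $\{V_n\}_{n\in\omega}$ in the original alphabet $\{a_m^{\pm 1}\}$ obeying the finiteness condition, set $f(a_n) = V_n$ and $f(b_n) = f(c_n) = E$, and let $\psi = \phi_0\circ\pi\circ\phi_f \colon \HEG_{a,b,c}\to G$ via the substitution construction (\cite[Proposition 1.9]{E1}). Since $f$ kills all $b,c$ letters, $\phi_f$ sends every word of $\W_{b,c}$, finite or infinite, to $E$, so $\HAG_{b,c}$ will land in the kernel of the descended map. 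A homomorphism on $\HEG_{a,b,c}$ descends to $\HAG_{a,b,c}$ precisely when it sends every finite word to $1_G$, equivalently when it annihilates each generator $a_n,b_n,c_n$; thus it suffices to arrange $\phi_0([[V_n]]) = 1_G$ for every $n$ while $\phi_0([[V_0V_1V_2\cdots]]) \neq 1_G$, for then $\phi([[a_0a_1a_2\cdots]]) = \phi_0\pi([V_0V_1\cdots]) \neq 1_G$.

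To find such $V_n$ I start from the nontriviality of $\phi_0$: choose $[[U]]$ with $\phi_0([[U]]) = g \neq 1_G$ and take $U$ reduced by Lemma \ref{reduced}. Finite words being trivial in $\HAG$, the word $U$ is infinite, so $\overline U$ is a countable order using infinitely many letter types; I write $U \equiv U_0U_1U_2\cdots$ as an $\omega$-indexed concatenation of convex blocks, which is routine by cutting $\overline U$ along a cofinal increasing sequence and letting the first block absorb any troublesome initial segment. Setting $P_0 \equiv E$, $P_{k+1}\equiv P_kU_k$, $h_k = \phi_0([[P_k]])$ and $u_k = \phi_0([[U_k]])$, the relation $[[P_{k+1}]] = [[P_k]][[U_k]]$ gives $h_{k+1} = h_ku_k$ with $h_0 = 1_G$.

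The telescoping idea is to insert conjugators that cancel only in the infinite product: take $V_k \equiv C_kU_kC_{k+1}^{-1}$ with $\phi_0([[C_k]]) = h_k$, so that $\phi_0([[V_k]]) = h_ku_kh_{k+1}^{-1} = 1_G$, while at the group level $[V_0\cdots V_N] = [P_{N+1}C_{N+1}^{-1}]$. The main obstacle is the finiteness condition: the naive choice $C_k \equiv P_k$ both violates it (each letter of $U_0$ would recur in every $P_k$) and collapses each $V_k$ to $E$. This is exactly where Lemma \ref{hagfact1} does the work: since $\pi(\HEG^m) = \HAG$ for all $m$, the class $[[P_k]]$ has a representative $C_k$ using no letter $a_j$ with $j<k$. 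With this choice each letter $a_j$ appears only in $C_0,\dots,C_j$ and in the finitely many blocks $U_k$ containing it, so $\{V_k\}$ satisfies the finiteness condition and $\phi_f$ is defined.

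It remains to confirm $[V_0V_1\cdots] = [U]$ by comparing $p_n$-images. Because $U$ has only finitely many letters below $a_n$ and the supports of the $C_k$ escape to infinity, for all large $N$ every block $V_k$ with $k>N$ is free of letters below $a_n$; hence $p_n([V_0V_1\cdots]) = p_n([V_0\cdots V_N]) = p_n([P_{N+1}])\,p_n([C_{N+1}])^{-1} = p_n([U])$, using $p_n([C_{N+1}]) = 1$ from the high support and $p_n([P_{N+1}]) = p_n([U])$ from stabilization. As this holds for every $n$ we get $[V_0V_1\cdots] = [U]$, so $\phi_0\pi([V_0V_1\cdots]) = g \neq 1_G$ and the construction closes. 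I expect the block decomposition of $\overline U$ and the bookkeeping of the finiteness condition to be the fussiest points, but the substantive step is the appeal to $\pi(\HEG^m)=\HAG$, which supplies $\phi_0$-correct representatives whose supports run off to infinity and thereby reconciles ``each factor is $\phi_0$-trivial'' with ``the infinite product is nontrivial.''
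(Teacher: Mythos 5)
Your proof is correct and takes essentially the same route as the paper: both arguments hinge on Lemma \ref{hagfact1} to supply representatives with support escaping to infinity, then build a substitution $a_n \mapsto V_n$ in which each $V_n$ is $\phi_0$-trivial but the infinite concatenation telescopes to a nontrivial element, with the $b_n, c_n$ letters killed (you send them to $E$ directly; the paper equivalently precomposes with the retraction $r_a$). The paper just organizes the telescope more simply---first substituting $a_n \mapsto W_n$ so that every generator maps to one fixed $g$, then composing with $a_n \mapsto a_n a_{n+1}^{-1}$---which is exactly your construction in the special case of the trivial block decomposition $U_0 \equiv U$, so your block-and-prefix bookkeeping (and the stronger requirement $[[C_k]] = [[P_k]]$ rather than merely $\phi_0([[C_k]]) = h_k$) is harmless but not needed.
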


\begin{proof}  Select $g\in \phi_0(\HAG) \setminus \{1_G\}$.  By Lemma \ref{hagfact1} we select for each $n\in \omega$ an element $[W_n]\in \HEG^n$ such that $\phi_0\circ \pi([W_n]) = g$.  Let $\sigma_0: \HEG \rightarrow \HEG$ be the endomorphism determined by $a_n \mapsto W_n$.  Letting $\phi_1 = \phi_0\circ \sigma_0:\HEG \rightarrow G$, we have $\phi_1([a_n]) = g$ for all $n\in \omega$.

Now let $\sigma_1: \HEG \rightarrow \HEG$ be the endomorphism determined by $a_n \mapsto a_{n}a_{n+1}^{-1}$.  Notice that $\phi_2 = \phi_1\circ \sigma_1$ satisfies $\phi_2([a_n]) = 1_G$ for all $n\in \omega$.  Thus $\phi_2$ descends to a map $\phi': \HAG\rightarrow G$ by letting $\phi'([[W]]) = \phi_2([W])$.  Moreover we have $\phi_2([a_0a_1a_2\cdots ]) = g$ and so $\phi'([[a_0a_1\cdots]]) =g$.  Letting $r_a:\HAG_{a, b, c}\rightarrow \HAG$ be the retraction map, define $\phi: \HAG_{a, b, c} \rightarrow G$ by $\phi'\circ r_a$.  Thus $\HAG_{b, c}\leq \ker(\phi)$ and $\phi([[a_0a_1\cdots]])\neq 1_G$.
\end{proof}

Let $\Sigma$ be a collection of subsets of $\omega$ such that each $S\in \Sigma$ is infinite, for distinct $S_0, S_1\in \Sigma$ the intersection $S_0\cap S_1$ is finite, and $|\Sigma| = 2^{\aleph_0}$.  Such a construction is fairly straightforward (see  \cite[II.1.3]{Ku}).  For each $S\in \Sigma$ define a word $U_S\in \Red_{b, c}$ by $\overline{U_S} = \omega$ and $U_S(n) = \begin{cases}b_n$ if $n\in S\\ c_n$ if $n\notin S  \end{cases}$.  For each $n\in \omega$ and $S\in \Sigma$ let $U_{S, n} = W\upharpoonright (\omega \setminus \{0, \ldots, n-1\})$.  Thus $U_{S, 0} = U_S$ and for each $n\in \omega$ we have $[U_{S, n}]\in \HEG_{b, c}^n$.  Let $T$ be a symbol such that $T\notin \Sigma$ and define a word $U_T\in \Red$ by $U_T \equiv a_0a_1a_2\cdots$ and let $U_{T, n} \equiv a_na_{n+1}\cdots$.

Given a word $W\in \Red_{a, b, c}$ we say an interval $I\subseteq \overline{W}$ \emph{participates in $\Sigma$ for $W$} if  $W\upharpoonright I \equiv U_{S, n}$ or $W\upharpoonright I \equiv U_{S, n}^{-1}$ for some $S\in \Sigma$ and $n\in \omega$.  Similarly, given a word $W\in \Red_{a, b, c}$ we say an interval $I\subseteq \overline{W}$ is  \emph{maximal in $\Sigma$ for $W$} if $I$ participates in $\Sigma$ and there does not exist a strictly larger interval $\overline{W}\supset I' \supset I$ which participates in $\Sigma$.

\begin{lemma}\label{existenceofmaximal}  If $W\in \Red_{a, b, c}$ and $I \subseteq \overline{W}$ is an interval which participates in $\Sigma$ for $W$ then $I$ is contained in a unique interval $I' \supseteq I$ which is maximal in $\Sigma$ for $W$.
\end{lemma}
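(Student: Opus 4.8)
The plan is to reduce the statement to a short, deterministic construction. Fix $W \in \Red_{a,b,c}$ and a participating interval $I$. By replacing $W$ with $W^{-1}$ if necessary I may assume $W\upharpoonright I \equiv U_{S,n}$ (the non-inverted case), so $I$ has order type $\omega$; write $x_n < x_{n+1} < \cdots$ for its elements, so that $W(x_j) = U_S(j)$ for all $j \geq n$. First I would observe that the data $(S,n)$ is recovered from $I$ alone: the index $n$ is the least subscript occurring in $W\upharpoonright I$, and the letters $W(x_j)$ for $j \geq n$ determine $S \cap [n,\infty)$. Since $\Sigma$ is almost disjoint and each of its members is infinite, no second $S' \in \Sigma$ can agree with $S$ on the infinite tail $[n,\infty)$, so $S$ is the unique member of $\Sigma$ compatible with $I$.

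The crux is to show that every participating interval $I' \supseteq I$ again satisfies $W\upharpoonright I' \equiv U_{S,n'}$ with the same $S$, and differs from $I$ only by a leftward extension. I would first rule out the inverted orientation: an infinite convex subset of an $\omega^{*}$-ordered set has a greatest element, hence cannot contain the order-type-$\omega$ interval $I$; thus $I'$ must read $U_{S',n'}$ for some $S' \in \Sigma$, $n' \in \omega$. Transporting $I$ through the order isomorphism $\{n',n'+1,\dots\} \to I'$, the subinterval $I$ becomes a convex subset of $\omega$ of order type $\omega$, that is, a tail; matching subscripts then forces this tail to begin at position $n$, forces $W(x_j) = U_{S'}(j)$ for all $j \geq n$, and hence (again by almost disjointness) forces $S' = S$ and $n' \leq n$. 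Consequently $I'$ is obtained from $I$ by prepending the finitely many elements $y_{n'} < \cdots < y_{n-1} < x_n$, and convexity of $I'$ in $\overline{W}$ shows each $y_{j-1}$ is the immediate predecessor of $y_j$ in $\overline{W}$ carrying the prescribed label $U_S(j-1)$.

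This last point makes the leftward extension entirely deterministic: starting from $x_n$, at each stage $j \leq n$ one either finds the immediate predecessor of the current element and checks that its label equals the forced letter $U_S(j-1)$, or one halts. Since only the already-pinned-down member $S$ and the letters $U_S(j-1)$ are involved, the participating intervals containing $I$ form a single chain, indexed by an initial run of at most $n$ successful extension steps. Such a chain is finite, so it has a greatest element $I^{*}$; any participating interval containing $I^{*}$ also contains $I$ and therefore lies in this chain below $I^{*}$, so $I^{*}$ is maximal in $\Sigma$, and any maximal participating interval containing $I$ must coincide with $I^{*}$. I expect the real work to be in the second paragraph — specifically, verifying purely from order-theoretic convexity in the arbitrary countable order $\overline{W}$ that containment forces a common $S$, a common alignment of subscripts, and extension by immediate predecessors; once that rigidity is established, existence and uniqueness of the maximum follow immediately from the finiteness of $\{0,\dots,n\}$.
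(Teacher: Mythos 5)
Your proof is correct and takes essentially the same route as the paper's: both pin down $S$, $n$, and the orientation uniquely from the word read on $I$ (using almost disjointness of $\Sigma$ and the order types $\omega$ versus $\omega^{*}$), and both observe that any participating interval containing $I$ can only extend it leftward through immediate predecessors carrying the forced letters $U_S(j)$, so at most $n$ extension steps are possible. The paper packages this as an induction on $n$ and dismisses the inverted case as ``similar'' where you reduce via $W \mapsto W^{-1}$; your unrolled chain argument also makes explicit the rigidity step (that every participating superinterval is such a leftward extension) which the paper's proof leaves implicit.
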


\begin{proof}  Let $I \subseteq \overline{W}$ satisfy the hypotheses.  Suppose $W\upharpoonright I \equiv U_{S, n}$.  By the definition of the $U_{S,n}$, it is clear that if $m\in \omega$ and $S' \in \Sigma$ also satisfy $W\upharpoonright I \equiv U_{S', m}$ then $S = S'$ and $m = n$.  Also, it cannot be that $W \upharpoonright  I \equiv U_{S', m}^{-1}$, since $\overline{U_{S, n}}$ has order type $\omega$ and $\overline{U_{S', m}^{-1}}$ has order type $-\omega$.  If there does not exist an immediate predecessor $i < \min(I)$ such that $W\upharpoonright (I \cup \{i\}) \equiv U_{S, n-1}$ then $I$ is maximal in $\Sigma$ for $W$.  Otherwise we get $W\upharpoonright (I \cup \{i\}) \equiv U_{S, n-1}$ and apply induction on $n$.

The proof in case $W\upharpoonright I \equiv U_{S, n}^{-1}$ is similar.
\end{proof}

\begin{lemma}\label{disjoint}  Given $W\in \Red_{a, b, c}$, if subintervals $I_0, I_1 \subseteq \overline{W}$ are both maximal in $\Sigma$ for $W$ then either $I_0\cap I_1 = \emptyset$ or $I_0 = I_1$.
\end{lemma}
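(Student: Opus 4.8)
The plan is to assume $I_0 \cap I_1 \neq \emptyset$ and deduce $I_0 = I_1$. The first step is a \emph{sign-matching} observation. Each $U_{S,n}$ takes values only among the positive letters $\{b_m, c_m\}$, so $W\upharpoonright I$ consists entirely of positive letters when $W\upharpoonright I \equiv U_{S,n}$ and entirely of inverse letters when $W\upharpoonright I \equiv U_{S,n}^{-1}$. Since any $i \in I_0 \cap I_1$ has a single value $W(i)$, the intervals $I_0$ and $I_1$ must participate with the same orientation; otherwise $W(i)$ would be simultaneously a positive letter and an inverse letter. Replacing $W$ by $W^{-1}$ if necessary (which preserves reducedness, reverses the order on $\overline{W}$, and interchanges the two orientations while sending participating intervals to participating intervals), I may assume both restrictions are positive, so $W\upharpoonright I_0 \equiv U_{S_0, n_0}$ and $W\upharpoonright I_1 \equiv U_{S_1, n_1}$, each of order type $\omega$.

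Next I would record the structural features of a positive participating interval $I$. Convexity of $I$ together with order type $\omega$ forces the successor of any $x \in I$ within $I$ to coincide with the immediate successor of $x$ in $\overline{W}$; in particular $I$ has a least element and is closed under taking immediate successors in $\overline{W}$. Moreover, if $W\upharpoonright I \equiv U_{S,n}$ then the letter $W(x)$ at each $x\in I$ has a well-defined index $k$ (with $W(x)\in\{b_k, c_k\}$), and this index increases by exactly $1$ on passing to the immediate successor. The key consequence is that $I_0$ and $I_1$ must \emph{agree above any common point}: if $x\in I_0\cap I_1$ then its immediate successor in $\overline{W}$ lies in both $I_0$ and $I_1$, so $I_0\cap I_1$ is closed under successor. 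Since $I_0\cap I_1$ is a nonempty subset of the well-ordered set $I_0$, it has a least element $j$, and closure under successor yields $I_0\cap I_1 = \{x\in I_0 : x\geq j\} = \{x\in I_1 : x\geq j\}$.

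Finally I would derive the dichotomy from the position of $j$ relative to the least elements $m_0 = \min I_0$ and $m_1 = \min I_1$. If both $j > m_0$ and $j > m_1$ held, then $j$ would have an immediate predecessor $j^-$ in $\overline{W}$, and convexity of each interval would place $j^-$ in both $I_0$ and $I_1$, hence in $I_0\cap I_1$---contradicting $j = \min(I_0\cap I_1)$. Therefore $j = m_0$ or $j = m_1$; say $j = m_0$. Then $I_0 = \{x\in I_0 : x\geq j\} = I_0\cap I_1 \subseteq I_1$, and since $I_1$ participates in $\Sigma$ while $I_0$ is maximal, the containment $I_0 \subseteq I_1$ forces $I_0 = I_1$. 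The symmetric case $j = m_1$ uses maximality of $I_1$ instead.

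I expect the main obstacle to be the careful order-theoretic bookkeeping rather than any single hard idea: one must argue entirely in terms of the abstract total order $\overline{W}$, repeatedly using convexity to pass between ``successor within $I$'' and ``immediate successor in $\overline{W}$,'' and keep track of orientations through the reduction to the positive case. Once the sign-matching step and the ``agreement above a common point'' principle are in place, the minimum-of-intersection argument closes the proof cleanly.
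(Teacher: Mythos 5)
Your proof is correct and follows essentially the same route as the paper's: match the orientation of the two intervals via the letter at a common point, use convexity and order type $\omega$ to show the intersecting intervals are nested, and then conclude equality from maximality. The paper simply asserts the nestedness of intersecting intervals of order type $\omega$ in one line and invokes Lemma~\ref{existenceofmaximal}, whereas you supply the order-theoretic bookkeeping (closure under immediate successors, the minimum of the intersection) that justifies that assertion.
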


\begin{proof}  Suppose $I_0 \cap I_1 \neq \emptyset$ and select $i\in I_0 \cap I_1$.  If $W(i)$ does not have superscript $-1$ then by how we have defined the words $U_{S, n}$ we see that $I_0$ and $I_1$ are both of order type $\omega$.  Since $I_0\cap I_1 \neq \emptyset$ and both $I_0$ and $I_1$ are intervals of order type $\omega$, we get either $I_0 \subseteq I_1$ or $I_1\subseteq I_0$ and since both are maximal in $\Sigma$  for $W$ we get $I_0 = I_1$ by Lemma \ref{existenceofmaximal}.  The case where $W(i)$ has superscript $-1$ is handled similarly.
\end{proof}

\begin{lemma}\label{decomposition}  If $W\in \Red_{a, b, c}$ there is a unique decomposition $\overline{W} = \prod_{l\in \Lambda} I_{\lambda}$ such that $I_{\lambda}$ is either maximal in $\Sigma$ for $W$ or a maximal interval which does not intersect with any interval which is maximal in $\Sigma$ for $W$.
\end{lemma}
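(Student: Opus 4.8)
The plan is to build the decomposition explicitly from the collection of maximal-in-$\Sigma$ intervals and then fill in the gaps, leaning on Lemma \ref{disjoint} for the order bookkeeping. Write $\mathcal{M}$ for the set of all intervals of $\overline{W}$ that are maximal in $\Sigma$ for $W$; by Lemma \ref{disjoint} the members of $\mathcal{M}$ are pairwise disjoint, and each is convex. Set $C = \overline{W}\setminus\bigcup\mathcal{M}$. For each $i\in C$ I would let $G_i$ be the union of all convex subsets of $\overline{W}$ that contain $i$ and are disjoint from $\bigcup\mathcal{M}$. Since every such convex set contains $i$, their union $G_i$ is again convex; since each is disjoint from $\bigcup\mathcal{M}$, so is $G_i$; and $G_i$ is by construction the largest convex set with these two properties, i.e. a maximal interval meeting no member of $\mathcal{M}$. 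A routine argument shows that if $G_i\cap G_j\neq\emptyset$ then $G_i\cup G_j$ is again a convex set disjoint from $\bigcup\mathcal{M}$, forcing $G_i=G_j$ by maximality; hence the distinct $G_i$ are pairwise disjoint and partition $C$.

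For existence, I would take as blocks the members of $\mathcal{M}$ together with the distinct gap blocks $G_i$. These are convex, pairwise disjoint (members of $\mathcal{M}$ are mutually disjoint by Lemma \ref{disjoint}, the $G_i$ are mutually disjoint, and each $G_i$ avoids $\bigcup\mathcal{M}$), and they cover $\overline{W}$ since every point lies either in some member of $\mathcal{M}$ or in its gap block. The key order-theoretic fact is that any two disjoint convex subsets of a linear order are comparable under the relation ``every element of one precedes every element of the other''; this lets me totally order the blocks by this relation, yielding an index set $\Lambda$ and an order isomorphism $\overline{W}\cong\prod_{\lambda\in\Lambda}I_{\lambda}$ that realizes the blocks as a concatenation of the required form.

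For uniqueness, suppose $\overline{W}=\prod_{\lambda\in\Lambda}I_{\lambda}$ is any decomposition of the stated type, so that each block is either a member of $\mathcal{M}$ or a maximal interval disjoint from $\bigcup\mathcal{M}$. First I would show that every $M\in\mathcal{M}$ occurs as a block: no gap block can meet $M$ (gap blocks avoid $\bigcup\mathcal{M}$), so the points of $M$ are covered by blocks lying in $\mathcal{M}$, and any member of $\mathcal{M}$ meeting $M$ equals $M$ by Lemma \ref{disjoint}; hence $M$ itself is a block. Consequently the remaining blocks are maximal intervals disjoint from $\bigcup\mathcal{M}$ that partition $C$, and these are exactly the distinct $G_i$ by the uniqueness of the maximal gap through each point. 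Thus the set of blocks is forced, and since their order is determined by the order on $\overline{W}$, the decomposition is unique.

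I expect the only genuine obstacle to be the order-theoretic bookkeeping, namely verifying that maximal gap intervals exist and are unique and that disjoint convex sets are comparable, so that the blocks really do concatenate to $\overline{W}$; all of this reduces to elementary properties of convex sets in a linear order together with the disjointness supplied by Lemma \ref{disjoint}.
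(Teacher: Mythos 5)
Your proof is correct and follows essentially the same route as the paper: take the maximal-in-$\Sigma$ intervals (pairwise disjoint by Lemma \ref{disjoint}), fill the complement with maximal intervals disjoint from them, and order the resulting blocks to get the concatenation. The only differences are cosmetic: where the paper invokes Zorn's Lemma to produce the maximal gap intervals and declares uniqueness ``clear,'' you construct each gap block explicitly as the union of all convex sets through a given point and spell out the uniqueness argument in detail.
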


\begin{proof}  To begin we let $\{I_{\lambda}\}_{\lambda \in \Lambda'}$ be the collection of intervals which are maximal in $\Sigma$ for $W$.  Next, by Zorn's Lemma we select all maximal intervals $\{I_{\lambda}\}_{\lambda \in \Lambda''}$ which are disjoint from the elements of $\{I_{\lambda}\}_{\lambda \in \Lambda'}$.  Taking $\Lambda = \Lambda' \cup \Lambda''$ and endowing this set with the obvious ordering, we see that $\overline{W} \equiv \prod_{\lambda\in \Lambda} I_{\lambda}$.  Uniqueness is clear.
\end{proof}

Lemma \ref{decomposition} gives a word $W\in \Red_{a, b, c}$ a unique decomposition $W \equiv \prod_{\lambda \in \Lambda} W\upharpoonright I_{\lambda}$.  Now, given a function $f: \Sigma \rightarrow \Sigma\cup \{T\}$ we define a function $F_f: \Red_{a, b, c} \rightarrow \W_{a, b, c}$ by letting

\begin{center}
 $F_f(W) \equiv \prod_{\lambda \in \Lambda} W_{\lambda}'$

\end{center}
 where

\begin{center}
$W \equiv \prod_{\lambda \in \Lambda}W_{\lambda}$
\end{center}

\noindent is the aforementioned decomposition implied by Lemma \ref{decomposition} and

\begin{center}
$W_{\lambda}' \equiv\begin{cases}W_{\lambda}\upharpoonright I_{\lambda}$ if $I_{\lambda}$ is not maximal in $\Sigma$ for $W\\ U_{f(S), n}$ if $W\upharpoonright I_{\lambda}\equiv U_{S, n}$ with $S\in \Sigma\\ U_{f(S), n}^{-1}$ if $W\upharpoonright I_{\lambda}\equiv U_{S, n}^{-1}$ with $S\in \Sigma\end{cases}$
\end{center}

The object $\prod_{\lambda \in \Lambda} W_{\lambda}'$ is evidently a function whose domain is a totally ordered set which is order isomorphic to $\overline{W}$.  It has as codomain $\{a_n^{\pm 1}, b_n^{\pm 1}, c_n^{\pm 1}\}_{n\in \omega}$.  Moreover, for each $n\in \omega$ the set of elements $i\in \overline{\prod_{\lambda \in \Lambda} W_{\lambda}'}$ for which the subscript of $\overline{\prod_{\lambda \in \Lambda} W_{\lambda}'}(i)$ is $\leq n$ has the same cardinality as the set of elements $i\in \overline{W}$ such that $W(i)$ has subscript $\leq n$.  Thus $\prod_{\lambda \in \Lambda} W_{\lambda}'\in \W_{a, b, c}$.

We check that the map $\psi_f:\HEG \rightarrow \HAG$ given by $\psi_{f}(W) = [[F_{f}(W)]]$ is a homomorphism.  Towards this we give the following lemma.

\begin{lemma}\label{almosthom}  If $W\in \Red_{a, b, c}$ and $W \equiv W_0W_1$ then $\psi_{f}(W) = \psi_{f}(W_0)\psi_{f}(W_1)$.
\end{lemma}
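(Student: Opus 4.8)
The plan is to reduce everything to a comparison of the interval decompositions furnished by Lemma \ref{decomposition}. Since $W \equiv W_0 W_1$ is reduced, both $W_0$ and $W_1$ are reduced, so $F_f(W_0)$ and $F_f(W_1)$ are defined; because multiplication in $\HAG$ is induced by concatenation we have $\psi_f(W_0)\psi_f(W_1) = [[F_f(W_0)F_f(W_1)]]$, and it suffices to prove $[[F_f(W)]] = [[F_f(W_0)F_f(W_1)]]$. I would regard $\overline{W_0}$ as an initial (lower) segment of $\overline{W}$ and $\overline{W_1}$ as the complementary final segment. The first thing to record is that, since the decomposition intervals of $W$ are pairwise disjoint convex sets (Lemmas \ref{disjoint} and \ref{decomposition}), at most one of them can meet both $\overline{W_0}$ and $\overline{W_1}$; call such an interval, when it exists, the straddling interval $I_{\lambda_0}$.

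Next I would show that away from the cut the three decompositions agree. Every maximal-in-$\Sigma$ interval of $W$ contained in $\overline{W_0}$ is again maximal in $\Sigma$ for $W_0$, since any enlargement inside $\overline{W_0}$ would be an enlargement inside $\overline{W}$, contradicting maximality; conversely every maximal-in-$\Sigma$ interval $J$ of $W_0$ lies in a maximal-in-$\Sigma$ interval $J'$ of $W$ by Lemma \ref{existenceofmaximal}, and unless $J' = I_{\lambda_0}$ this forces $J = J'$. Moreover a complementary interval of $W$ contains no subinterval participating in $\Sigma$ (else it would meet a maximal-in-$\Sigma$ interval by Lemma \ref{existenceofmaximal}), so $F_f$ acts verbatim there. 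Consequently, if nothing straddles, the decomposition of $W$ splits cleanly and $F_f(W) \equiv F_f(W_0)F_f(W_1)$ as words; the same holds when the straddling interval is complementary, since $F_f$ leaves it unchanged and merely records it, split, inside the complementary parts of $W_0$ and $W_1$.

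The remaining and essentially only substantive case is when $I_{\lambda_0}$ is itself maximal in $\Sigma$, say $W \upharpoonright I_{\lambda_0} \equiv U_{S,n}$ (the case $U_{S,n}^{-1}$ is symmetric with the roles of $W_0$ and $W_1$ exchanged). Here I would use that $\overline{U_{S,n}}$ has order type $\omega$: a proper initial segment of $\omega$ is finite, so the lower part $I_{\lambda_0}\cap\overline{W_0}$ is a finite word $V_0 \equiv U_{S,n}\upharpoonright\{n,\dots,n+k-1\}$, while the upper part is $U_{S,n+k}$, which is maximal in $\Sigma$ for $W_1$. Writing $A$ for the image of the part of $W$ preceding $I_{\lambda_0}$ (entirely inside $\overline{W_0}$, hence computed identically in $F_f(W)$ and $F_f(W_0)$) and $B$ for the image of the part following it, one gets $F_f(W) \equiv A\,U_{f(S),n}\,B$ and $F_f(W_0)F_f(W_1) \equiv A\,V_0\,U_{f(S),n+k}\,B$, where $U_{f(S),n} \equiv V_1\,U_{f(S),n+k}$ with $V_1 \equiv U_{f(S),n}\upharpoonright\{n,\dots,n+k-1\}$ finite. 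Thus the two words differ only by the finite factors $V_0$ and $V_1$ occupying the same position, and deleting each finite factor (using $[[XVY]]=[[XY]]$ for finite $V$, as recorded before Lemma \ref{hagfact1}) shows both reduce to $[[A\,U_{f(S),n+k}\,B]]$. The main obstacle is exactly this step: one must verify that cutting a substituted interval $U_{f(S),n}$ through the middle alters $F_f(W)$, relative to $F_f(W_0)F_f(W_1)$, only in a finite prefix, so that the ambiguity introduced by the substitution $f$ on the straddled finite segment becomes invisible in $\HAG$.
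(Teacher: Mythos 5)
Your proof is correct and follows essentially the same route as the paper's: decompose $W$ via Lemma \ref{decomposition}, note that at most one decomposition interval can straddle the cut, and resolve the substantive case (straddled interval maximal in $\Sigma$) by observing that $F_f(W)$ and $F_f(W_0)F_f(W_1)$ differ only in finite subwords, which are invisible in $\HAG$. The only difference is organizational: where the paper enumerates subcases (2.1.1--2.3) according to the type of the straddled piece and its immediate neighbors, you absorb the neighbor cases by remarking that $F_f$ acts verbatim on complementary pieces, so any merging of the finite leftover $V_0$ with an adjacent complementary piece leaves the image word unchanged.
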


\begin{proof}  If $W\equiv\prod_{\lambda \in \Lambda} W_{\lambda}$ is the decomposition given by Lemma \ref{decomposition} then one of the following holds:

\begin{enumerate}\item  There exist $\Lambda_0, \Lambda_1 \subseteq \Lambda$ such that all elements of $\Lambda_0$ are below those of $\Lambda_1$, $W_0 \equiv \prod_{\lambda \in \Lambda_0}W_{\lambda}$ and $W_1 \equiv \prod_{\lambda \in \Lambda_1}W_{\lambda}$.

\item  There exists $\zeta \in \Lambda$ such that $W_{\zeta} \equiv W_{0, \zeta}W_{1, \zeta}$ and $W_0 \equiv (\prod_{\lambda<\zeta}W_{\lambda})W_{0, \zeta}$ and $W_1 \equiv W_{1, \zeta}(\prod_{\lambda<\zeta}W_{\lambda})$.

\end{enumerate}

In case (1) the decompositions of $W_0$ and $W_1$ given by Lemma \ref{decomposition} are respectively $W_0 \equiv \prod_{\lambda \in \Lambda_0}W_{\lambda}$ and $W_1 \equiv \prod_{\lambda \in \Lambda_1}W_{\lambda}$.  Thus in this case we get $F_f(W) \equiv F_f(W_0)F_f(W_1)$ and $\psi_f(W) = \psi_f(W_0)\psi_f(W_1)$ is immediate.

In case (2) there are several subcases.  We mention each of these subcases and state the Lemma \ref{decomposition} decomposition for $W_0$ and $W_1$.  

\noindent \textbf{2.1}  If $\overline{W_{\zeta}}$ was maximal in $\Sigma$ for $W$ with $W_{\zeta} \equiv U_{S, n}$ then $W_{1, \zeta} \equiv U_{S, n'}$ for some $n' >n$ and $W_{0, \zeta}$ will be a finite word which is a prefix to $U_{S, n}$.

\noindent\textbf{2.1.1}  If in addition to 2.1 there is an immediate predecessor $\zeta'<\zeta$ in $\Lambda$ and $\overline{W_{\zeta'}}$ is not maximal in $\Sigma$ for $W$, then the decomposition of $W_0$ is $W_0 \equiv (\prod_{\lambda < \zeta'}W_{\lambda})(W_{\zeta'}W_{0, \zeta})$ and the decomposition of $W_1$ is $W_1 \equiv W_{1, \zeta}\prod_{\zeta< \lambda}W_{\lambda}$.  By this we mean that the decomposition of word $W_0$ has index of order type $\{\lambda \in \Lambda\mid \lambda \leq \zeta'\}$ and the last word of the decomposition is $\equiv W_{\zeta'}W_{0, \zeta}$.  The decomposition of $W_1$ has index of order type $\{\lambda \in \Lambda\mid \zeta \leq \lambda\}$ and the first word of the decomposition is $\equiv W_{1, \zeta}$.  Here we get $F_f(W_0) \equiv  (\prod_{\lambda < \zeta'}W_{\lambda}')(W_{\zeta'}W_{0, \zeta})$ and $F_f(W_1) \equiv W_{1, \zeta}'\prod_{\zeta< \lambda}W_{\lambda}'$ and since the $[[\cdot]]$ class of a word is closed under modifying a finite subword we get

\begin{center}  $\psi_f(W_0)\psi_f(W_1) = [[ (\prod_{\lambda < \zeta'}W_{\lambda}')(W_{\zeta'}W_{0, \zeta})]][[W_{1, \zeta}'\prod_{\zeta< \lambda}W_{\lambda}']]$

$= [[ (\prod_{\lambda < \zeta'}W_{\lambda}')(W_{\zeta'}W_{0, \zeta})W_{1, \zeta}'\prod_{\zeta< \lambda}W_{\lambda}']]$

$= [[ (\prod_{\lambda < \zeta'}W_{\lambda}')W_{\zeta'}W_{0, \zeta}W_{1, \zeta}'\prod_{\zeta< \lambda}W_{\lambda}']]$

$= [[ (\prod_{\lambda < \zeta'}W_{\lambda}')W_{\zeta'}'W_{0, \zeta}W_{1, \zeta}'\prod_{\zeta< \lambda}W_{\lambda}']]$

$= [[ (\prod_{\lambda \leq \zeta'}W_{\lambda}')W_{0, \zeta}W_{1, \zeta}'\prod_{\zeta< \lambda}W_{\lambda}']]$

$= [[ (\prod_{\lambda <\zeta}W_{\lambda}')W_{\zeta}'\prod_{\zeta< \lambda}W_{\lambda}']]$

$= [[\prod_{\lambda \in \Lambda} W_{\lambda}']]= \psi_f(W)$

\end{center}

\noindent\textbf{2.1.2}  If in addition to 2.1 there is an immediate predecessor $\zeta'<\zeta$ in $\Lambda$ and $\overline{W_{\zeta'}}$ is maximal in $\Sigma$ for $W$ then the decomposition of $W_0$ is $W_0 \equiv  (\prod_{\lambda < \zeta'}W_{\lambda})(W_{\zeta'})(W_{0, \zeta})$ and $W_1 \equiv W_{1, \zeta}\prod_{\zeta< \lambda}W_{\lambda}$.  Here we get

\begin{center}
$\psi_f(W_0)\psi_f(W_1) = [[ (\prod_{\lambda < \zeta'}W_{\lambda}')(W_{\zeta'}')(W_{0, \zeta}')]][[W_{1, \zeta}'\prod_{\zeta< \lambda}W_{\lambda}']]$

$=[[(\prod_{\lambda < \zeta}W_{\lambda}')W_{0, \zeta}'W_{1, \zeta}'(\prod_{\zeta< \lambda}W_{\lambda}')]]$

$=[[(\prod_{\lambda < \zeta}W_{\lambda}')W_{\zeta}'(\prod_{\zeta< \lambda}W_{\lambda}')]]$

$=\psi_f(W)$
\end{center}

\noindent\textbf{2.1.3}  If there is no immediate predecessor $\zeta'<\zeta$ in $\Lambda$ then $W_0 \equiv  (\prod_{\lambda < \zeta}W_{\lambda})(W_{0, \zeta})$ and $W_1 \equiv W_{1, \zeta}\prod_{\zeta< \lambda}W_{\lambda}$ are the decompositions.  We get that

\begin{center}$\psi_f(W_0)\psi_f(W_1) = [[ (\prod_{\lambda < \zeta}W_{\lambda}')(W_{0, \zeta}')W_{1, \zeta}'\prod_{\zeta< \lambda}W_{\lambda}']]$

$=[[(\prod_{\lambda < \zeta}W_{\lambda}')W_{1, \zeta}'\prod_{\zeta< \lambda}W_{\lambda}']]$

$=[[(\prod_{\lambda < \zeta}W_{\lambda}')W_{\zeta}'\prod_{\zeta< \lambda}W_{\lambda}']]$

$=\psi_f(W)$

\end{center}

\noindent\textbf{2.2}  If $\overline{W_{\zeta}}$ was maximal in $\Sigma$ for $W$ with $W_{\zeta} \equiv U_{S, n}^{-1}$ then $W_{0, \zeta} \equiv U_{S, n'}^{-1}$ for some $n' >n$ and $W_{1, \zeta}$ will be a finite word which is a suffix to $U_{S, n}^{-1}$.

\noindent\textbf{2.2.1}  If in addition to 2.2 there is an immediate successor $\zeta< \zeta'$ in $\Lambda$ and $\overline{W_{\zeta'}}$ is not maximal in $\Sigma$ for $W$, then the decomposition of $W_0$ is $W_0 \equiv \prod_{\lambda<\zeta} W_{\lambda}(W_{0, \zeta})$ and the decomposition of $W_1$ is $W_1 \equiv (W_{1, \zeta}W_{\zeta'})\prod_{\zeta'<\lambda}W_{\lambda}$.  The claim in this subcase follows along the same lines as 2.2.1.

\noindent\textbf{2.2.2}  If in addition to 2.2 there is an immediate successor $\zeta<\zeta'$ in $\Lambda$ and $\overline{W_{\zeta'}}$ is maximal in $\Sigma$ for $W$ then the decomposition of $W_0$ is $W_0 \equiv \prod_{\lambda<\zeta} W_{\lambda}(W_{0, \zeta})$ and the decomposition of $W_1$ is $W_1 \equiv (W_{1, \zeta})(W_{\zeta'})\prod_{\zeta'<\lambda}W_{\lambda}$.  The claim in this subcase follows along the same lines as 2.1.2.

\noindent\textbf{2.2.3}  If in addition to 2.2 there is no immediate successor $\zeta <\zeta'$ in $\Lambda$ then $W_0 \equiv  (\prod_{\lambda < \zeta}W_{\lambda})(W_{0, \zeta})$ and $W_1 \equiv W_{1, \zeta}\prod_{\zeta< \lambda}W_{\lambda}$ are the decompositions.  The claim follows in this subcase along the same lines as 2.1.3.

\noindent\textbf{2.3}  If $\overline{W_{\zeta}}$ was not maximal for $\Sigma$ in $W$ then the decompostions of $W_0$ and $W_1$ are $W_0 \equiv (\prod_{\lambda<\zeta}W_{\lambda})W_{0, \zeta}$ and $W_1 \equiv W_{1, \zeta}\prod_{\zeta<\lambda}W_{\lambda}$.  Here we get

\begin{center}  $\psi_f(W_0)\psi_f(W_1) = [[ (\prod_{\lambda<\zeta}W_{\lambda}')W_{0, \zeta}']][[W_{1, \zeta}'\prod_{\zeta<\lambda}W_{\lambda}']]$

$= [[(\prod_{\lambda<\zeta}W_{\lambda}')W_{0, \zeta}W_{1, \zeta}\prod_{\zeta<\lambda}W_{\lambda}']]$

$= [[(\prod_{\lambda<\zeta}W_{\lambda}')(W_{0, \zeta}W_{1, \zeta})\prod_{\zeta<\lambda}W_{\lambda}']]$

$= [[(\prod_{\lambda<\zeta}W_{\lambda}')W_{\zeta}\prod_{\zeta<\lambda}W_{\lambda}']]$

$=  [[(\prod_{\lambda<\zeta}W_{\lambda}')W_{\zeta}'\prod_{\zeta<\lambda}W_{\lambda}']]$

$= [[\prod_{\lambda\in \Lambda}W_{\lambda}']]$

$= \psi_f(W)$
\end{center}

\noindent and the claim follows in all circumstances.

\end{proof}

\begin{lemma}\label{homomorphism}  The function $\psi_f:\HEG_{a, b, c} \rightarrow \HAG_{a, b, c}$ is a homomorphism which descends to a homomorphism $\phi_f: \HAG_{a, b, c} \rightarrow \HAG_{a, b, c}$.
\end{lemma}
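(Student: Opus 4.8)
The plan is to derive the homomorphism property from Lemma \ref{almosthom}, which already treats a genuine concatenation of reduced words, together with the compatibility of $F_f$ with inversion. Throughout I identify $\HEG_{a, b, c}$ with the set $\Red_{a, b, c}$ of reduced words via Lemma \ref{reduced}, so that the product of two reduced words is the reduced representative of their concatenation, and $\psi_f$ becomes a well-defined function on group elements by passing to the unique reduced representative.

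First I would record the inverse law $\psi_f(W^{-1}) = \psi_f(W)^{-1}$ for every reduced $W$. The decomposition of $W^{-1}$ given by Lemma \ref{decomposition} is exactly the reverse of the decomposition of $W$ with every factor inverted: an interval $I$ is maximal in $\Sigma$ for $W$ precisely when its reversal is maximal in $\Sigma$ for $W^{-1}$, and the defining clauses of $F_f$ merely interchange the roles of $U_{S, n}$ and $U_{S, n}^{-1}$ under this reversal. Hence $F_f(W^{-1}) \equiv F_f(W)^{-1}$, and applying $[[\cdot]]$ gives $\psi_f(W^{-1}) = \psi_f(W)^{-1}$.

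To verify $\psi_f([W_0][W_1]) = \psi_f(W_0)\psi_f(W_1)$ for reduced $W_0, W_1$, I would invoke Lemma \ref{reduced} to write $W_0 \equiv W_{0, 0}W_{0, 1}$ and $W_1 \equiv W_{1, 0}W_{1, 1}$ with $W_{0, 1} \equiv W_{1, 0}^{-1}$ and $W_{0, 0}W_{1, 1}$ reduced, the last word being the reduced representative of the product. Each of these four factors is a subword of a reduced word and hence reduced, so Lemma \ref{almosthom} applies to the three concatenations $W_0 \equiv W_{0, 0}W_{0, 1}$, $W_1 \equiv W_{1, 0}W_{1, 1}$, and $W_{0, 0}W_{1, 1}$. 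Expanding $\psi_f(W_0)\psi_f(W_1)$ by Lemma \ref{almosthom} and cancelling the middle pair $\psi_f(W_{0, 1})\psi_f(W_{1, 0}) = \psi_f(W_{0, 1})\psi_f(W_{0, 1}^{-1}) = 1_{\HAG_{a, b, c}}$ via the inverse law leaves $\psi_f(W_{0, 0})\psi_f(W_{1, 1}) = \psi_f(W_{0, 0}W_{1, 1}) = \psi_f([W_0][W_1])$, as desired.

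For the descent, recall that $\HAG_{a, b, c}$ is the quotient of $\HEG_{a, b, c}$ by the normal closure of the subgroup of elements admitting a representative of finite length. Since $\psi_f$ is a homomorphism into a group, it is enough to check that $\psi_f$ annihilates each such finite element. But a nonempty reduced word $W$ of finite length has no subinterval participating in $\Sigma$, because every $U_{S, n}$ has order type $\omega$; consequently its Lemma \ref{decomposition} decomposition is the single nonmaximal interval $\overline{W}$, so $F_f(W) \equiv W$ and $\psi_f(W) = [[W]] = 1_{\HAG_{a, b, c}}$, finite words being trivial in $\HAG_{a, b, c}$. Thus $\psi_f$ factors through the quotient to yield $\phi_f$. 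The substantive combinatorics has already been discharged in Lemma \ref{almosthom}; the only genuinely new ingredient is the inverse law, and I expect the step requiring the most care to be the verification that reversal commutes with the $\Sigma$-decomposition, with the attendant bookkeeping that the four pieces supplied by Lemma \ref{reduced} are reduced so that Lemma \ref{almosthom} applies.
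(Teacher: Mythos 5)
Your proposal is correct and follows essentially the same route as the paper's proof: split the product of two reduced words into the four pieces given by Lemma \ref{reduced}, apply Lemma \ref{almosthom} to the three relevant concatenations, cancel the middle pair via the inverse law $\psi_f(W^{-1}) = \psi_f(W)^{-1}$, and obtain the descent by checking that finite words lie in $\ker(\psi_f)$. The only difference is that you spell out the two steps the paper dismisses as obvious (the compatibility of $F_f$ with reversal-inversion, and the triviality of $\psi_f$ on finite words), and your justifications of both are sound.
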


\begin{proof}  Let $W_0, W_1\in \Red_{a, b, c}$.  We let

\begin{center}  $W_0 \equiv W_{0, 0}W_{0,1}$

$W_1 \equiv W_{1, 0}W_{1, 1}$

$W_{0, 1} \equiv W_{1, 0}^{-1}$

$W_{0, 0} W_{1, 1}\in \Red_{a, b, c}$
\end{center}

\noindent  according the conclusion of Lemma \ref{reduced}.  We have 

\begin{center}  $\psi_f(W_0)\psi_f(W_1) = \psi_f(W_{0, 0})\psi_f(W_{0, 1})\psi_f(W_{1, 0})\psi_f(W_{1, 1})$

$= \psi_f(W_{0, 0})\psi_f(W_{0, 1})\psi_f(W_{0, 1}^{-1})\psi_f(W_{1, 1})$

$=\psi_f(W_{0, 0})\psi_f(W_{0, 1})(\psi_f(W_{0, 1}))^{-1}\psi_f(W_{1, 1})$

$= \psi_f(W_{0, 0})\psi_f(W_{1, 1})$

$= \psi_f(W_{0, 0}W_{1, 1})$

\end{center}

\noindent where the first and last equality come from Lemma \ref{almosthom} and the third equality is clear since the map $\psi_f$ obviously satisfies $\psi_f(W^{-1}) = \psi_f(W)^{-1}$.  Thus $\psi_f$ is a homomorphism and it is easy to check that $\bigcup_{n\in \omega}\HEG_{n, a, b, c}\leq \ker(\psi_f)$, and so $\psi_f$ induces a homomorphism $\phi_f: \HAG_{a, b, c} \rightarrow \HAG_{a, b, c}$.
\end{proof}

Now we are ready to prove Proposition \ref{inforapenny}.  Supposing there exists a nontrivial homomorphism from $\HAG$ to $G$ we obtain by Lemma \ref{nicehom} a homomorphism $\phi: \HAG_{a, b, c} \rightarrow G$ such that $\phi([[U_T]])\neq 1_G$ and $\HAG_{b, c}\leq \ker(\phi)$.  Given a subset $\mathcal{S} \subseteq \Sigma$ we define a function $f_{\mathcal{S}}: \Sigma \rightarrow \Sigma\cup \{T\}$ by $f_{\mathcal{S}}(S) = \begin{cases}  S$ if $S\notin \mathcal{S}\\ T$ if $S\in\mathcal{S}\end{cases}$.  The accompanying $\phi_{f_{\mathcal{S}}}: \HAG \rightarrow \HAG$ satisfies 

\begin{center}
$\phi_{f_{\mathcal{S}}}([[U_S]]) = \begin{cases}[[U_S]]$ if $S\notin \mathcal{S}\\ [[U_T]]$ if $S\in \mathcal{S}\end{cases}$
\end{center}

\noindent and so $[[U_S]]\in \ker(\phi\circ\phi_{f_{\mathcal{S}}})$ if and only if $S\notin \mathcal{S}$.  Thus $|\Hom(\HAG, G)| \geq 2^{2^{\aleph_0}}$.

We end this section by restating and proving Theorem \ref{subgroups}:

\begin{B}  The group $\Aut(\HAG)$ contains an isomorphic copy of the full symmetric group $S_{2^{\aleph_0}}$ on a set of cardinality continuum.  Thus $\Aut(\HAG)$ is of cardinality $2^{2^{\aleph_0}}$ and all groups of size at most $2^{\aleph_0}$ are subgroups.
\end{B}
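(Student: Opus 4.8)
The plan is to realize the symmetric group $S_{2^{\aleph_0}}$ as the symmetric group $S_\Sigma$ on the almost disjoint family $\Sigma$ (which has $|\Sigma| = 2^{\aleph_0}$), acting on $\HAG$ by permuting the elements $[[U_S]]$. For a bijection $\sigma\colon\Sigma\to\Sigma$ I would regard $\sigma$ as a function $f_\sigma\colon\Sigma\to\Sigma\cup\{T\}$ whose image avoids $T$, and take the homomorphism $\phi_{f_\sigma}\colon\HAG_{a,b,c}\to\HAG_{a,b,c}$ supplied by Lemma \ref{homomorphism}, transported to $\HAG$ via the isomorphism $\HAG\simeq\HAG_{a,b,c}$. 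Since $U_S\equiv U_{S,0}$ is a single maximal $\Sigma$-interval, $\phi_{f_\sigma}([[U_S]]) = [[U_{\sigma(S)}]]$. The goal is then to show that $\sigma\mapsto\phi_{f_\sigma}$ is an injective group homomorphism into $\Aut(\HAG)$.

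Faithfulness is the easy half and uses only machinery already in place. It suffices that $S\mapsto[[U_S]]$ be injective, for then $\sigma\neq\tau$ gives some $S$ with $\sigma(S)\neq\tau(S)$, whence $\phi_{f_\sigma}([[U_S]]) = [[U_{\sigma(S)}]]\neq[[U_{\tau(S)}]] = \phi_{f_\tau}([[U_S]])$. To separate $[[U_S]]$ from $[[U_{S'}]]$ for $S\neq S'$ I would apply $\phi_{f_{\{S\}}}$ (the map of Proposition \ref{inforapenny} sending $S\mapsto T$ and fixing all other members of $\Sigma$) followed by the homomorphism $\phi$ produced by Lemma \ref{nicehom} from the identity of $\HAG$. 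The composite sends $[[U_S]]$ to $\phi([[U_T]])\neq 1$ but sends $[[U_{S'}]]$ to $\phi([[U_{S'}]]) = 1$, since $[[U_{S'}]]\in\HAG_{b,c}\leq\ker(\phi)$; hence $[[U_S]]\neq[[U_{S'}]]$ and the prospective action is faithful.

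The main obstacle is the other half: showing each $\phi_{f_\sigma}$ is invertible and that $\phi_{f_\sigma}\circ\phi_{f_\tau} = \phi_{f_{\sigma\tau}}$ (the case $\sigma = \mathrm{id}$ being immediate, as no interval is relabeled). The difficulty is that $\phi_{f_\sigma}$ is defined through the canonical decomposition of a \emph{reduced} representative, so to evaluate $\phi_{f_\sigma}\circ\phi_{f_\tau}$ on $[[W]]$ one must reduce $F_{f_\tau}(W)$ and re-apply Lemma \ref{decomposition}. Once $\phi_{f_\tau}$ has replaced a maximal interval $U_{S,n}$ by $U_{\tau(S),n}$, a neighbouring letter that did not continue the $S$-pattern may continue the $\tau(S)$-pattern, so the relabeled interval absorbs it and the decomposition of $F_{f_\tau}(W)$ is genuinely coarser than the image of the decomposition of $W$. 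One must therefore verify the word identity $[[F_{f_\sigma}(\mathrm{red}\,F_{f_\tau}(W))]] = [[F_{f_{\sigma\tau}}(W)]]$, that is, that these boundary absorptions do not perturb the final $[[\cdot]]$-class. This is where I expect the real work to lie: one should either refine the construction so that a maximal $\Sigma$-interval can never be enlarged by a neighbouring letter after relabeling (making the substitution manifestly functorial in $\sigma$), or argue directly that the possibly infinitely many boundary discrepancies, each a finite modification abutting an infinite interval, accumulate to a $[[\cdot]]$-trivial difference. Controlling this interplay between relabeling and reduction is the crux of the theorem.

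Granting a faithful embedding $S_{2^{\aleph_0}}\hookrightarrow\Aut(\HAG)$, the remaining assertions are bookkeeping. The number of permutations of a set of size $2^{\aleph_0}$ is $2^{2^{\aleph_0}}$, so $|S_{2^{\aleph_0}}| = 2^{2^{\aleph_0}}$, while $\Aut(\HAG)\subseteq\HAG^{\HAG}$ has cardinality at most $(2^{\aleph_0})^{2^{\aleph_0}} = 2^{2^{\aleph_0}}$; hence $|\Aut(\HAG)| = 2^{2^{\aleph_0}}$. Finally, any group $H$ with $|H|\leq 2^{\aleph_0}$ embeds by Cayley's theorem into $\mathrm{Sym}(H)$, which embeds into $S_{2^{\aleph_0}}\leq\Aut(\HAG)$, so every group of size at most continuum is a subgroup of $\Aut(\HAG)$.
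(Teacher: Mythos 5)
Your construction is the same as the paper's (the maps $\phi_{f_\sigma}$ of Lemma \ref{homomorphism} applied to bijections $\sigma$ of $\Sigma$), and the faithfulness half of your argument is correct; it even supplies a verification the paper omits, namely that $S\mapsto [[U_S]]$ is injective. The gap is the step you left open, and it is worse than an omission: the composition law $\phi_{f_\sigma}\circ\phi_{f_\tau}=\phi_{f_{\sigma\tau}}$ is not merely unproven, it is false for the construction as defined, so your second repair strategy (showing that the boundary discrepancies accumulate to a $[[\cdot]]$-trivial difference) cannot succeed. Concretely: fix distinct $S,S'\in\Sigma$, let $\tau$ be the transposition of $\Sigma$ exchanging them, choose $n_0<n_1<\cdots$ in $S\setminus S'$ (possible since $S\cap S'$ is finite), and set
\[
W \equiv c_{n_0}^{-1}U_{S,n_0}a_0\,c_{n_1}^{-1}U_{S,n_1}a_1\,c_{n_2}^{-1}U_{S,n_2}a_2\cdots
\]
Since each $n_k\in S$, every $c_{n_k}^{-1}$ is followed by $b_{n_k}$, so $W$ is reduced, and its maximal $\Sigma$-intervals are exactly the blocks $U_{S,n_k}$ (an inverse letter cannot lie in a positive interval $U_{S,n}$, and the letters $a_k$ separate the blocks). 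Applying $F_{f_\tau}$ replaces each block by $U_{S',n_k}$, whose first letter is $c_{n_k}$ because $n_k\notin S'$; each $c_{n_k}^{-1}$ now cancels, and the reduced form of $F_{f_\tau}(W)$ is $R\equiv U_{S',n_0+1}a_0\,U_{S',n_1+1}a_1\cdots$, whose maximal $\Sigma$-intervals are the blocks $U_{S',n_k+1}$. Hence $\phi_{f_\tau}\phi_{f_\tau}([[W]])=[[V]]$ where $V\equiv U_{S,n_0+1}a_0\,U_{S,n_1+1}a_1\cdots$. But $[[V]]\neq[[W]]$: recording the exponent sum of every letter induces a homomorphism from $\HAG_{a,b,c}$ to $(\prod_{n\in\omega}\mathbb{Z}^3)/(\bigoplus_{n\in\omega}\mathbb{Z}^3)$ (any product of conjugates of elements of $\bigcup_{n\in\omega}\HEG_{n,a,b,c}$ has finitely supported exponent vector), and the exponent vectors of $W$ and $V$ differ by $+1$ in the $b_{n_k}$ coordinate and $-1$ in the $c_{n_k}$ coordinate for \emph{every} $k$, an infinitely supported difference. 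Thus $\phi_{f_\tau}\circ\phi_{f_\tau}\neq Id = \phi_{f_{\tau^2}}$; in fact $\phi_{f_\tau}$ is not even injective, since it sends the distinct elements $[[W]]$ and $[[V]]$ both to $[[R]]$. A single finite modification is $[[\cdot]]$-negligible, but infinitely many of them, spread over infinitely many blocks, are not.

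Consequently a correct proof must follow your first strategy: the substitution (or the words $U_S$ themselves) has to be redesigned so that a relabeled maximal interval can never cancel against or absorb a neighbouring letter, making $F_{f_\sigma}$ genuinely compatible with reduction. You should also know that this is not a defect of your write-up relative to the source: the paper's proof of Theorem \ref{subgroups} asserts exactly the identities $\phi_{\sigma^{-1}}\phi_{\sigma}=\phi_{\sigma}\phi_{\sigma^{-1}}=Id_{\HAG}$ and $\phi_{\sigma_0}\phi_{\sigma_1}=\phi_{\sigma_0\sigma_1}$ with no argument beyond ``it is straightforward to check,'' and the example above contradicts both, so the published argument contains the same gap you flagged. (Lemma \ref{homomorphism}, Proposition \ref{inforapenny}, and hence Theorem \ref{manymanyhom} are unaffected, as they never compose two substitution maps.) Your instinct about where the real work lies was exactly right, but as it stands neither your proposal nor the paper's proof establishes the theorem; closing the hole requires a modified construction, not a finer analysis of the present one.
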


\begin{proof}  Supposing that $\sigma: \Sigma \rightarrow \Sigma$ is a bijection, we get an endomorphism $\phi_{\sigma}: \HAG \rightarrow \HAG$ and notice that $\phi_{\sigma^{-1}}\phi_{\sigma} = \phi_{\sigma}\phi_{\sigma^{-1}} = Id_{\HAG}$.  Thus the mapping $\phi_{\sigma}$ is an automorphism, and it is straightforward to check that $\phi_{\sigma_0}\phi_{\sigma_1} = \phi_{\sigma_0\sigma_1}$, so $\sigma\mapsto \phi_{\sigma}$ is a homomorphism to $\Aut(\HAG)$.  If $\sigma(S)= S' \neq S$ then $\phi_{\sigma}([[U_S]]) = [[U_{S'}]]\neq [[U_S]]$, and so the mapping $\sigma \mapsto \phi_{\sigma}$ has trivial kernel.  Thus we see that $\Aut(\HAG)$ has a copy of $S_{2^{\aleph_0}}$.  From this, and since $\HAG$ is of cardinality $2^{\aleph_0}$, we see that $\Aut(\HAG)$ is of cardinality $2^{2^{\aleph_0}}$.  Since $S_{2^{\aleph_0}}\leq \Aut(\HAG)$ every group of cardinality $\leq 2^{\aleph_0}$ is also a subgroup by considering the left action of a group on itself.

\end{proof}

\end{section}

\begin{section}{Residual slenderness and subgroups}\label{Residualslenderness}

The techniques used so far suggest that residuality can be considered in determining the n-slenderness of small cardinality groups.  Theorem \ref{residually} below makes this explicit.  We explore two subgroups of a group $G$ whose triviality precisely determines n-slenderness in a group of small cardinality.  Further, an example is shown in which these two subgroups are not equal.  From this, a new family of groups is shown to be n-slender.  We motivate and then present Theorem \ref{subgroupofheg} which determines which subgroups of $\HEG$ are n-slender (they are precisely those which do not contain an isomorph of $\HEG$ as a subgroup).  Finally, we leave the reader with two open questions.

\begin{theorem}\label{residually}  If $G$ is a group such that $|G|<2^{\aleph_0}$ then $G$ is n-slender if and only if $G$ is residually n-slender.
\end{theorem}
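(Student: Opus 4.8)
The plan is to prove the two implications separately, with essentially all of the content residing in a single observation about homomorphisms out of $\HAG$. The forward direction is immediate and uses no cardinality hypothesis: if $G$ is itself n-slender, then for any nontrivial $g$ the identity homomorphism $G\to G$ is a map to an n-slender group which does not send $g$ to $1_G$, so $G$ is trivially residually n-slender. (Throughout I take \emph{residually n-slender} to mean that for every $g\in G\setminus\{1_G\}$ there is a homomorphism from $G$ to an n-slender group not killing $g$; by the quotient formulation this is the same as asking for a normal $N\trianglelefteq G$ with $g\notin N$ and $G/N$ n-slender, since subgroups of n-slender groups are again n-slender.)

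For the converse I would argue the contrapositive: assuming $G$ is not n-slender and $|G|<2^{\aleph_0}$, I will exhibit a \emph{single} nontrivial element of $G$ that is annihilated by every homomorphism from $G$ into an n-slender group, witnessing the failure of residual n-slenderness. First, Lemma \ref{hagfact2} supplies a nontrivial homomorphism $\phi:\HAG\rightarrow G$; fix any $g\in\phi(\HAG)\setminus\{1_G\}$.

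The crux is the observation that every homomorphism $\psi:\HAG\rightarrow H$ into an n-slender group $H$ is trivial. Indeed, $\psi\circ\pi:\HEG\rightarrow H$ is a homomorphism, so n-slenderness of $H$ yields $n\in\omega$ with $\HEG^n\leq\ker(\psi\circ\pi)$; but $\pi(\HEG^n)=\HAG$ by Lemma \ref{hagfact1}, whence $\psi(\HAG)=\psi(\pi(\HEG^n))=(\psi\circ\pi)(\HEG^n)=\{1_H\}$. Granting this, the conclusion follows at once: for any homomorphism $\rho:G\rightarrow H$ with $H$ n-slender, the composite $\rho\circ\phi:\HAG\rightarrow H$ is trivial, so $\phi(\HAG)\leq\ker(\rho)$ and in particular $g\in\ker(\rho)$. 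Thus the nontrivial element $g$ lies in the kernel of every homomorphism from $G$ to an n-slender group, so $G$ is not residually n-slender.

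I do not expect a genuine obstacle here: once the key observation is isolated, the argument is a short composition of maps. The two points that merit explicit mention are the interpretation of residual n-slenderness noted above (together with the easy fact that n-slenderness passes to subgroups, so that separating homomorphisms and n-slender quotients amount to the same thing) and the appeal to Lemma \ref{hagfact2}, which is exactly where the hypothesis $|G|<2^{\aleph_0}$ is consumed. Everything else is the elementary manipulation $\psi(\HAG)=(\psi\circ\pi)(\HEG^n)$ furnished by $\pi(\HEG^n)=\HAG$.
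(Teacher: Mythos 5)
Your proof is correct and takes essentially the same route as the paper: the cardinality hypothesis enters exactly through Lemma \ref{hagfact2}, and your ``key observation'' that every homomorphism from $\HAG$ to an n-slender group is trivial is precisely the second claim of Lemma \ref{hagfact1}, which you re-derive from its first claim just as the paper does. Your contrapositive phrasing is merely a repackaging of the paper's argument by contradiction, so there is no substantive difference.
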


\begin{proof}  The direction ($\Rightarrow$) is obvious by using the identity homomorphism.  For ($\Leftarrow$), we assume $G$ is residually n-slender and $|G|<2^{\aleph_0}$.  If $G$ fails to be n-slender, we have by Lemma \ref{hagfact2} a nontrivial homomorphism $\phi: \HAG \rightarrow G$.  Letting $g\in \phi(\HAG) \setminus\{1_G\}$ we pick a homomorphism $\psi: G \rightarrow H$ such that $H$ is n-slender and $\psi(g) \neq 1_H$.  Then $\psi\circ \phi$ is a nontrivial homomorphism from $\HAG$ to the slender group $H$, contradicting Lemma \ref{hagfact1}.
\end{proof}

The statement of Theorem \ref{residually} can fail to hold if one drops the condition $|G|<2^{\aleph_0}$.  For example, $\HEG$ is residually n-slender (since $\HEG$ is residually free) but the identity map witnesses that $\HEG$ is not n-slender.  Theorem \ref{residually} motivates the following definition.

\begin{definition}  Given a group $G$ we define the \textit{slender kernel} $\slk(G)$ to be the intersection of all kernels of homomorphisms from $G$ to noncommutatively slender groups, that is

\begin{center}  $\slk(G) = \bigcap \{\ker(\phi)\mid \phi: G \rightarrow H \text{ with } H \text{ n-slender}\}$
\end{center}

\end{definition}

\noindent  This normal subgroup of $G$ records some of the obstruction which exists for the noncommutative slenderness of $G$.  All torsion elements of $G$ are in $\slk(G)$ and all subgroups isomorphic to $\mathbb{Q}$ are included therein.  By Theorem \ref{residually} the slender kernel of $G$ is precisely the obstruction for noncommutative slenderness of $G$ when $|G|<2^{\aleph_0}$ -that is, such a $G$ is noncommutatively slender if and only if $\slk(G)$ is trivial.  By Lemmas \ref{hagfact2} and \ref{hagfact1} we see that the existence of a nontrivial homomorphic image of $\HAG$ in $G$ is also a precise obstruction of n-slenderness for such groups $G$ of small cardinality.  Let $\HAGim(G)$ denote the subgroup

\begin{center} $\HAGim= \langle \bigcup\{\phi(\HAG)\mid \phi: \HAG \rightarrow G \text{ is a homomorphism}\}\rangle$
\end{center}

This subgroup is obviously normal.  By the proof of Theorem \ref{residually} we have $\HAGim(G) \leq \slk(G)$ for any group $G$.  

\begin{example}  We notice that $\HAGim(G)$ need not be the union $\bigcup\{\phi(\HAG)\mid \phi: \HAG \rightarrow G \text{ is a homomorphism}\}$.  To see this we let $G = (\mathbb{Z}/2) * (\mathbb{Z}/2)$.  We know that $\mathbb{Z}/2$ is a homomorphic image of $\HAG$ (this group is not n-slender), and so in this case $\HAGim(G) = G$.  However, any homomorphism $\phi: \HAG \rightarrow  (\mathbb{Z}/2) * (\mathbb{Z}/2)$ has image which is contained in a conjugate of the first or the second copy of $\mathbb{Z}/2$ by applying Lemma \ref{hagfact3}.  Thus if $\langle h_0 \rangle$ is the first copy of $\mathbb{Z}/2$ and $\langle h_1\rangle$ is the second copy, the element $h_0h_1 \in G$ would never be in the image of a homomorphism from $\HAG$.
\end{example}

Since $\HAGim(G) \leq \slk(G)$ it seems natural to ask whether equality always holds.  We give an example of a countabe torsion-free group for which this fails, after first proving a lemma.  Recall that a subgroup $H\leq G$ is \emph{central} provided $H$ is a subgroup of the center of $G$.  Central subgroups are always normal.

\begin{lemma}\label{atomic}  Suppose $\{H_i\}_{i\in I}$ is a collection of groups and $H$ is a group such that for each $i$ we have a monomorphism $\phi_i: H\rightarrow H_i$ with $\phi_i(H)$ in the center of $H_i$.  Let $\ast_{H} H_i$ denote the amalgamated free product obtained by identifying the copies of images of $H$ via the maps $\phi_i$.  If $\phi: \HEG \rightarrow \ast_{H} H_i$, then for some $n\in \mathbb{N}$, $j\in I$ and $g\in \ast_{H} H_i$ we have $\phi(\HEG^n) \leq g^{-1} H_j g$.
\end{lemma}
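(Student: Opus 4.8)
The plan is to reduce to the free product case already handled by Lemma \ref{funneling}, by quotienting out the central amalgamated subgroup and then pulling the resulting containment back. Write $G = \ast_H H_i$ and let $\bar{H}$ denote the common image of $H$ in $G$ (so $\bar{H} = \phi_i(H)$ inside each factor $H_i \leq G$). The first step is to observe that $\bar{H}$ is central in all of $G$: by hypothesis $\phi_i(H)$ is central in $H_i$ for every $i$, and since $G$ is generated by the (images of the) factors $H_i$, an element of $\bar{H}$ commutes with every generator of $G$ and hence with all of $G$. In particular $\bar{H}$ is normal and equals its own normal closure.

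Next I would identify the quotient $G/\bar{H}$. Let $q: G \to G/\bar{H}$ be the quotient map, so $\ker(q) = \bar{H}$. By the universal property of the amalgamated product, a homomorphism out of $G$ that kills $\bar{H}$ is exactly a family of homomorphisms $H_i \to K$ each killing $\phi_i(H)$, that is, a family of homomorphisms $H_i/\phi_i(H) \to K$ subject to no compatibility constraint (the amalgamation condition over $H$ becomes vacuous once $\bar{H}$ is killed). This is precisely the universal property of the free product, so $G/\bar{H} \cong \ast_i\, (H_i/\phi_i(H))$, with $q$ restricting on each factor to the projection $H_i \to H_i/\phi_i(H)$.

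With this in hand, compose to get a homomorphism $q\circ\phi: \HEG \to \ast_i\, (H_i/\phi_i(H))$ into a free product. By Lemma \ref{funneling} there exist $n\in\omega$ and $j\in I$, together with an element of $G/\bar{H}$ which I lift to some $g\in G$, such that $(q\circ\phi)(\HEG^n) \leq q(g)^{-1}\,(H_j/\phi_j(H))\,q(g) = q(g^{-1}H_j g)$. The final step is to pull this back through $q$. Since $\ker(q) = \bar{H}$ we get $\phi(\HEG^n) \leq q^{-1}\big(q(g^{-1}H_j g)\big) = (g^{-1}H_j g)\bar{H}$, and here the centrality hypothesis does the work: $g^{-1}\bar{H}g = \bar{H} \leq H_j$, so $(g^{-1}H_j g)\bar{H} = g^{-1}(H_j\bar{H})g = g^{-1}H_j g$. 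This gives $\phi(\HEG^n) \leq g^{-1}H_j g$, as required.

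The one point requiring genuine care is this last pullback, and it is where I expect the real content to sit. The claim that $q^{-1}$ of a conjugate of a factor is again a \emph{single} conjugate $g^{-1}H_j g$, rather than something strictly larger, relies on both facts established above: that $\ker(q) = \bar{H}$ is contained in $H_j$, and that $\bar{H}$ is central, so that conjugation by $g$ fixes it setwise. This is exactly why the centrality hypothesis on the amalgamated subgroup is indispensable; for a general amalgamated product the funneling conclusion need not survive the quotient, and the clean pullback would fail.
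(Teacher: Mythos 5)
Your proposal is correct and takes essentially the same route as the paper: quotient by the central (hence normal) amalgamated subgroup, identify the quotient with the free product $\ast_{i\in I}(H_i/\phi_i(H))$, apply Lemma \ref{funneling}, lift the conjugating element through the quotient map, and pull back. The only difference is one of detail, not substance: you spell out the pullback step $q^{-1}\bigl(q(g^{-1}H_jg)\bigr) = (g^{-1}H_jg)\bar{H} = g^{-1}H_jg$, using centrality and $\bar{H}\leq H_j$, where the paper simply asserts that the final inclusion ``is easy to see.''
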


\begin{proof}  Since $\phi_i(H)$ is central in $H_i$ for every $i\in I$, we have $H \leq \ast_{H} H_i$ a central subgroup, and therefore normal.  The isomorphism $(\ast_{H} H_i)/H \simeq \ast_{i\in I} (H_i/H)$ is clear, and let $\psi:\ast_{H} H_i\rightarrow  \ast_{i\in I} (H_i/H)$ be the quotient map.  Given a map $\phi: \HEG \rightarrow \ast_{H} H_i$, we notice by Lemma \ref{funneling} that for some $n\in \mathbb{N}$, $j\in I$ and $h\in \ast_{H} H_i$ the inclusion $\psi\circ\phi(\HEG^n) \leq h^{-1}(H_j/H)h$.  Selecting $g\in \ast_{H} H_i$ satisfying $\psi(g) = h$ it is easy to see that $\phi(\HEG^n) \leq g^{-1}H_jg$.
\end{proof}

\begin{example}\label{strictinclusion}  Consider $2\mathbb{Z}$ both as a subgroup of $\mathbb{Z}$ as well as a subgroup of $\mathbb{Q}$.  Let $G$ be the amalgamated free product $\mathbb{Z} *_{2\mathbb{Z}}\mathbb{Q}$ which identifies the copy of $2\mathbb{Z}$ in $\mathbb{Z}$ with that in $\mathbb{Q}$.  As both $\mathbb{Z}$ and $\mathbb{Q}$ are torsion-free, the group $G$ is torsion-free.

By Lemma \ref{atomic}, any homomorphism $\phi: \HAG\rightarrow G$ must either have $\phi(\HAG)$ as a subgroup of a conjugate of $\mathbb{Z}$ or a conjugate of $\mathbb{Q}$.  As there is no nontrivial map from $\HAG$ to $\mathbb{Z}$, we see that any nontrivial image of $\HAG$ must lie inside a conjugate of $\mathbb{Q}$.  Thus $\HAGim(G) \leq \langle\langle \mathbb{Q}\rangle\rangle \leq G$, and since each conjugate of $\mathbb{Q}$ is in $\HAGim(G)$ we get $\HAGim(G) = \langle\langle \mathbb{Q}\rangle\rangle$.

Since $\slk(G) \geq \HAGim(G)$ and $\HAGim(G)$ is of index $2$ in $G$, it is clear that $\slk(G) = G$ since n-slender groups are torsion-free.
\end{example}

\begin{theorem}\label{beautiful}  Assume the hypotheses of Lemma \ref{atomic}.  If each of the groups $H_i$ is n-slender, then so is $\ast_{H} H_i$.
\end{theorem}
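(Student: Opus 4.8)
The plan is to obtain n-slenderness of $\ast_{H} H_i$ as a direct consequence of Lemma \ref{atomic} together with the self-similarity of $\HEG$. Write $G = \ast_{H} H_i$ and let an arbitrary homomorphism $\phi: \HEG \to G$ be given; to show $G$ is n-slender I must produce some $m \in \omega$ with $\HEG^m \leq \ker(\phi)$. First I would invoke Lemma \ref{atomic}, which under the standing hypotheses supplies $n \in \omega$, $j \in I$, and $g \in G$ such that $\phi(\HEG^n) \leq g^{-1}H_j g$. The restriction $\phi \upharpoonright \HEG^n$ is then a homomorphism from $\HEG^n$ into the conjugate subgroup $g^{-1}H_j g$, and the problem is reduced to controlling this restriction.

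The key point is that the target $g^{-1}H_j g$ is itself n-slender: conjugation by $g$ is an isomorphism $H_j \to g^{-1}H_j g$, n-slenderness is an isomorphism invariant, and $H_j$ is n-slender by assumption. Now I would exploit the natural isomorphism $\iota: \HEG \to \HEG^n$ induced by the index shift $a_k \mapsto a_{k+n}$. This isomorphism carries the retraction system of $\HEG$ onto that of $\HEG^n$; concretely $\iota(\HEG^k) = \HEG^{n+k}$, since the elements of $\HEG^k$ use only letters $a_l$ with $l \geq k$ and these are sent to words in the letters $a_{l+n}$ with $l+n \geq n+k$. Feeding the composite $(\phi \upharpoonright \HEG^n)\circ \iota : \HEG \to g^{-1}H_j g$ into the definition of n-slenderness of $g^{-1}H_j g$ yields some $k \in \omega$ with $\HEG^k \leq \ker\big((\phi \upharpoonright \HEG^n)\circ \iota\big)$, equivalently $\HEG^{n+k} \leq \ker(\phi)$. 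Setting $m = n + k$ gives $\HEG^m \leq \ker(\phi)$, hence $\phi \circ p_m = \phi$, and since $\phi$ was arbitrary this proves $\ast_{H} H_i$ is n-slender.

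The only step requiring genuine care — and the one I would write out most explicitly — is the verification that the identification $\iota:\HEG \cong \HEG^n$ is compatible with the two retraction systems, so that ``killing a tail of $\HEG^n$'' in the sense of its own n-slenderness translates back to the literal statement $\HEG^{n+k} \leq \ker(\phi)$ in $\HEG$. This is purely a bookkeeping matter about which circles are retracted, but it is the hinge of the argument; once it is in place, everything else is an immediate application of Lemma \ref{atomic} and the isomorphism-invariance of n-slenderness.
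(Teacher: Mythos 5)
Your proof is correct and is essentially the paper's own argument: the paper likewise applies Lemma \ref{atomic} to place $\phi(\HEG^n)$ inside a conjugate $g^{-1}H_jg$ and then invokes n-slenderness of $H_j$ to produce $m \geq n$ with $\phi(\HEG^m)$ trivial. The bookkeeping you single out as the hinge --- transporting n-slenderness through the conjugation isomorphism and the shift isomorphism $\HEG \simeq \HEG^n$ satisfying $\iota(\HEG^k) = \HEG^{n+k}$ --- is precisely what the paper's one-sentence step leaves implicit.
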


\begin{proof}  Assume the hypotheses and suppose $\phi: \HEG \rightarrow \ast_{H} H_i$ is a homomorphism.  By Lemma \ref{atomic} select $n\in \mathbb{N}$, $j\in I$ and $g\in \ast_{H} H_i$ so that $\phi(\HEG^n)\leq g^{-1}H_jg$.  Since $H_j$ is slender there exists $m\geq n$ so that $\phi(\HEG^m)$ is trivial.  Thus $\ast_{H} H_i$ is n-slender.
\end{proof}

This theorem provides new examples of n-slender groups, as seen in the next example.

\begin{example}\label{divisible}  Let $\{s_n\}_{n\in \omega}$ be a sequence in $\mathbb{Z} \setminus \{0\}$ such that $s_0 = 1$.  Since $\mathbb{Z}$ is n-slender, the amalgamated free product $\langle\{a_n\}_{n\in \omega} \mid \{a_0=a_n^{s_n}\}_{n\in \omega}\rangle$ is n-slender by Theorem \ref{beautiful}.  If one lets $s_n = n+1$, then the element $a_0$ has an $n$-th root for every $n\in \omega\setminus \{0\}$.  As far as the author is aware, this gives the first known example of an n-slender group which has a nontrivial divisible element.  Notice that a nontrivial group in which every element is divisible cannot be n-slender.  Such a group will either have torsion or it will be torsion-free, in which case one can easily piece together a subgroup which is isomorphic to $\mathbb{Q}$.
\end{example}

We end by motivating a noncommutative version of a theorem regarding slender subgroups and posing two questions.  The following is an immediate corollary to Theorem \ref{residually}:

\begin{corollary}\label{subgroupsofHEG}  Every subgroup of the Hawaiian earring group of cardinality $<2^{\aleph_0}$ is noncommutatively slender.
\end{corollary}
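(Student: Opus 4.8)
The plan is to derive the corollary directly from Theorem \ref{residually}, whose hypothesis is the cardinality bound $|G|<2^{\aleph_0}$ that we are already assuming. So the only content to supply is that a subgroup $H\leq \HEG$ with $|H|<2^{\aleph_0}$ is \emph{residually} n-slender, after which Theorem \ref{residually} immediately upgrades this to n-slenderness. Recall from the introduction and Section \ref{Prelim} that $\HEG$ is residually free and that free groups are n-slender; consequently $\HEG$ is residually n-slender, meaning that for every nontrivial $g\in \HEG$ there is a homomorphism $\phi:\HEG\rightarrow N$ to an n-slender group $N$ with $\phi(g)\neq 1_N$.

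First I would check the (routine) fact that residual n-slenderness is inherited by subgroups. Let $H\leq \HEG$ and let $h\in H$ be nontrivial. Viewing $h$ as a nontrivial element of $\HEG$, residual n-slenderness of $\HEG$ provides a homomorphism $\phi:\HEG\rightarrow N$ with $N$ n-slender and $\phi(h)\neq 1_N$. The restriction $\phi\upharpoonright H : H\rightarrow N$ is then a homomorphism to an n-slender group with $(\phi\upharpoonright H)(h)\neq 1_N$. Since $h$ was an arbitrary nontrivial element of $H$, this witnesses that $H$ is residually n-slender.

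Finally I would invoke Theorem \ref{residually}: because $|H|<2^{\aleph_0}$ by hypothesis and $H$ is residually n-slender, $H$ is n-slender, which is exactly the assertion of the corollary. I do not expect any genuine obstacle here, as the statement is labelled an immediate corollary; the single point worth recording is the inheritance of residual n-slenderness under passage to subgroups via restriction of homomorphisms, and all the substantive work is already carried out in Theorem \ref{residually}.
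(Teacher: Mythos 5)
Your proof is correct and follows exactly the route the paper intends: the corollary is stated there as immediate from Theorem \ref{residually}, the implicit argument being that $\HEG$ is residually free and hence residually n-slender (free groups being n-slender), that residual n-slenderness passes to subgroups by restricting the witnessing homomorphisms, and that Theorem \ref{residually} then upgrades this to n-slenderness under the hypothesis $|H|<2^{\aleph_0}$. You have simply written out the details the paper leaves implicit, so there is nothing to correct.
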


Although the group $\HEG$ is locally free, there exist countable subgroups which are not free (see the discussion following \cite[Theorem 6]{Hi}).  Thus Corollary \ref{subgroupsofHEG} gives examples of non-free n-slender groups.  By contrast, the group $\prod_{\omega}\mathbb{Z}$ is \emph{$\aleph_1$-free}- that is, every countable subgroup is free abelian \cite{Sp}.  There is an analogous abelian version of Corollary \ref{subgroupsofHEG}, which follows immediately from the classification of slender groups of small cardinality in \cite{Sa}:

\begin{obs}\label{abelianresidual}  Every subgroup of $\prod_{\omega}\mathbb{Z}$ of cardinality $<2^{\aleph_0}$ is slender.
\end{obs}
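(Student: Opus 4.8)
The plan is to derive Observation \ref{abelianresidual} by combining the structure of subgroups of $\prod_{\omega}\mathbb{Z}$ with the cited small-cardinality classification of slender abelian groups from \cite{Sa}. Recall (as stated in the introduction, via \cite{Nu}) that an abelian group is slender if and only if it is torsion-free and contains no subgroup isomorphic to $\mathbb{Q}$, to $\prod_{\omega}\mathbb{Z}$, or to the $p$-adic integers $\widehat{\mathbb{Z}}_p$ for any prime $p$. The first step is to observe that $\prod_{\omega}\mathbb{Z}$ is itself torsion-free, so every subgroup $A \leq \prod_{\omega}\mathbb{Z}$ is automatically torsion-free; this kills the torsion obstruction immediately and means we need only rule out the three forbidden subgroup types.

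First I would handle the cardinality-sensitive obstructions. Both $\mathbb{Q}$ and $\widehat{\mathbb{Z}}_p$ have cardinality $2^{\aleph_0}$, so if $|A| < 2^{\aleph_0}$ then $A$ cannot contain a copy of either: a subgroup of $A$ has cardinality at most $|A| < 2^{\aleph_0}$. Likewise $\prod_{\omega}\mathbb{Z}$ has cardinality $2^{\aleph_0}$, so $A$ cannot contain an isomorphic copy of $\prod_{\omega}\mathbb{Z}$ for the same reason. Hence, purely from the cardinality hypothesis $|A| < 2^{\aleph_0}$ together with torsion-freeness, all three forbidden configurations are excluded, and the classification of \cite{Nu} forces $A$ to be slender. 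This already gives the result without needing the finer $\aleph_1$-freeness of $\prod_{\omega}\mathbb{Z}$; the appeal to \cite{Sp} in the surrounding discussion is contextual rather than logically required here.

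The phrasing in the excerpt, however, says the observation follows from the classification of slender groups of small cardinality in \cite{Sa} specifically, so the intended route is even more direct: \cite{Sa} presumably states that a torsion-free abelian group of cardinality strictly less than continuum is slender precisely when it omits $\mathbb{Q}$, and indeed such a group can never contain $\mathbb{Q}$ (cardinality reason) nor $\prod_{\omega}\mathbb{Z}$ nor $\widehat{\mathbb{Z}}_p$ for the same reason. So the cleanest writeup simply invokes that every subgroup of $\prod_{\omega}\mathbb{Z}$ is torsion-free, notes $|A| < 2^{\aleph_0}$, and cites \cite{Sa} to conclude slenderness.

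Honestly, I do not anticipate a genuine obstacle: the argument is a two-line cardinality count against a cited classification theorem, which is exactly why the authors label it an immediate consequence. The only point requiring a modicum of care is making sure one is entitled to say that a subgroup of cardinality $<2^{\aleph_0}$ cannot contain $\mathbb{Q}$, $\prod_{\omega}\mathbb{Z}$, or $\widehat{\mathbb{Z}}_p$ — but each of these target groups has cardinality exactly $2^{\aleph_0}$, so the containment is impossible on cardinality grounds alone, and torsion-freeness is inherited from the ambient $\prod_{\omega}\mathbb{Z}$. Everything else is a direct appeal to the classification.
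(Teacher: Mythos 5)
There is a genuine gap, and it sits exactly at the one point where the ambient group $\prod_{\omega}\mathbb{Z}$ has to be used. You assert that $\mathbb{Q}$ has cardinality $2^{\aleph_0}$, but $\mathbb{Q}$ is countable, so the hypothesis $|A|<2^{\aleph_0}$ does not exclude a copy of $\mathbb{Q}$ on cardinality grounds; indeed $\mathbb{Q}$ itself satisfies that cardinality hypothesis, and it is torsion-free but not slender. The same false claim recurs in your second paragraph (``such a group can never contain $\mathbb{Q}$ (cardinality reason)''). As written, your proof uses no property of $\prod_{\omega}\mathbb{Z}$ beyond torsion-freeness, and torsion-freeness plus small cardinality is not enough --- ruling out $\mathbb{Q}$ is precisely the content of the observation. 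Your exclusions of $\prod_{\omega}\mathbb{Z}$ and of $\widehat{\mathbb{Z}}_p$ are fine, since those groups really do have cardinality $2^{\aleph_0}$.

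The repair is short. No nonzero element of $\prod_{\omega}\mathbb{Z}$ is divisible by every positive integer: if $\mathbb{Q}\simeq D\leq A\leq \prod_{\omega}\mathbb{Z}$ and $x\in D$ is nonzero, then for each $n\geq 1$ there is $y\in D$ with $ny=x$, and reading this coordinatewise gives $n\mid x_i$ in $\mathbb{Z}$ for every $n$ and every $i\in\omega$, forcing $x_i=0$ for all $i$, a contradiction. Hence every subgroup $A\leq\prod_{\omega}\mathbb{Z}$ is torsion-free and contains no copy of $\mathbb{Q}$, which together with $|A|<2^{\aleph_0}$ is exactly what the classification cited from \cite{Sa} requires (a group of cardinality less than continuum that is not slender must contain $\mathbb{Q}$ or some $\mathbb{Z}/p$); equivalently, combined with your correct cardinality exclusions of $\prod_{\omega}\mathbb{Z}$ and $\widehat{\mathbb{Z}}_p$, Nunke's characterization \cite{Nu} applies. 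This corrected route is the paper's intended one-line argument.
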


In light of $\aleph_1$-freeness and the fact that free abelian groups are slender, this observation does not furnish many new examples unless the continuum hypothesis fails.  Moreover, in light of \cite{Nu} one gets the stronger observation:

\begin{obs}\label{nunke}  A subgroup of $\prod_{\omega}\mathbb{Z}$ fails to be slender if and only if it contains a subgroup isomorphic to $\prod_{\omega}\mathbb{Z}$.
\end{obs}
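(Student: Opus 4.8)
The plan is to deduce this from Nunke's classification \cite{Nu}, which states that a torsion-free abelian group fails to be slender precisely when it contains an isomorphic copy of $\mathbb{Q}$, of $\prod_{\omega}\mathbb{Z}$, or of the group of $p$-adic integers $\widehat{\mathbb{Z}}_p$ for some prime $p$. Since any subgroup of $\prod_{\omega}\mathbb{Z}$ is torsion-free, the task reduces to showing that among the three possible obstructions only the middle one can actually occur inside $\prod_{\omega}\mathbb{Z}$; that is, $\prod_{\omega}\mathbb{Z}$ contains no copy of $\mathbb{Q}$ and no copy of $\widehat{\mathbb{Z}}_p$.

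The direction ($\Leftarrow$) is immediate: if a subgroup $H \leq \prod_{\omega}\mathbb{Z}$ contains a copy of $\prod_{\omega}\mathbb{Z}$, then $H$ is not slender directly by \cite{Nu}. For ($\Rightarrow$), suppose $H \leq \prod_{\omega}\mathbb{Z}$ is not slender. As $H$ is torsion-free, \cite{Nu} forces $H$ to contain a copy of $\mathbb{Q}$, of $\prod_{\omega}\mathbb{Z}$, or of $\widehat{\mathbb{Z}}_p$. Once I rule out the first and third possibilities for subgroups of $\prod_{\omega}\mathbb{Z}$, the remaining case furnishes the desired copy of $\prod_{\omega}\mathbb{Z}$ inside $H$.

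To exclude $\mathbb{Q}$, I would invoke the $\aleph_1$-freeness of $\prod_{\omega}\mathbb{Z}$ \cite{Sp}: every countable subgroup of $\prod_{\omega}\mathbb{Z}$ is free abelian, whereas $\mathbb{Q}$ is countable and not free. Alternatively, one notes that $\prod_{\omega}\mathbb{Z}$ has no nonzero divisible element, since an element divisible by every integer must vanish in each coordinate, so it can contain no copy of the divisible group $\mathbb{Q}$. To exclude $\widehat{\mathbb{Z}}_p$, the key observation is that $\widehat{\mathbb{Z}}_p$ already contains a countable non-free subgroup: fixing a prime $q \neq p$, the element $q$ is a unit in the ring $\widehat{\mathbb{Z}}_p$, so the subring $\mathbb{Z}[1/q]$ embeds as a subgroup, and $\mathbb{Z}[1/q]$ is countable, torsion-free, and not free abelian (being $q$-divisible but nontrivial). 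Hence if $\widehat{\mathbb{Z}}_p$ embedded in $\prod_{\omega}\mathbb{Z}$, then so would the countable non-free group $\mathbb{Z}[1/q]$, again contradicting $\aleph_1$-freeness.

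I expect the excision of the $p$-adic integers to be the only genuinely delicate point, since the $\mathbb{Q}$ case is settled by divisibility alone; the reduction of $\widehat{\mathbb{Z}}_p$ to a countable non-free subgroup via the unit $q^{-1}$ is precisely what makes the $\aleph_1$-freeness of $\prod_{\omega}\mathbb{Z}$ applicable and thereby closes the argument.
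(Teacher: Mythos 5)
Your proof is correct and follows exactly the route the paper intends: the paper offers no proof of Observation \ref{nunke}, presenting it as immediate ``in light of \cite{Nu},'' i.e., as the reduction via Nunke's classification that you carry out. Your exclusion of $\mathbb{Q}$ (no nonzero element of $\prod_{\omega}\mathbb{Z}$ is divisible by every integer) and of the $p$-adic integers (which contain the countable, non-free, $q$-divisible subgroup $\mathbb{Z}[1/q]$ for a prime $q\neq p$, contradicting the $\aleph_1$-freeness of $\prod_{\omega}\mathbb{Z}$ from \cite{Sp}) correctly supplies the details the paper leaves implicit.
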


We give the non-abelian analog to this stronger observation.

\begin{theorem}\label{subgroupofheg}  A subgroup of $\HEG$ fails to be n-slender if and only if it contains a subgroup isomorphic to $\HEG$.
\end{theorem}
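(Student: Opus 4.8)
The plan is to prove both directions, the reverse being essentially immediate and the forward direction absorbing all of the difficulty. For the reverse direction, suppose $K\le\HEG$ contains a subgroup $H$ together with an isomorphism $\iota\colon\HEG\to H$. Viewing $\iota$ as a homomorphism $\HEG\to K$, if some $n$ satisfied $\iota\circ p_n=\iota$, then injectivity of $\iota$ would force $p_n=\mathrm{id}_{\HEG}$, which is false since $p_n([a_n])=1$. Hence no $n$ works and $K$ fails to be n-slender. (This also records the general fact that failure of n-slenderness is inherited by overgroups, via composition with an inclusion.)

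For the forward direction, assume $K$ is not n-slender, witnessed by $\psi\colon\HEG\to K$ with $\psi(\HEG^n)\neq 1$ for every $n$. Composing with the inclusion gives $\phi\colon\HEG\to\HEG$ with image in $K$ and $\phi(\HEG^n)\neq1$ for all $n$. By Lemma~\ref{continuoustoconjugation} I would write $\phi=\mathrm{conj}_h\circ\phi_f$ for some $h\in\HEG$ and some standard mapping $f\colon a_n\mapsto W_n$. Since conjugation is an automorphism of $\HEG$, it suffices to embed $\HEG$ into $\phi_f(\HEG)$: conjugating such a copy by $h^{-1}$ places it inside $\phi(\HEG)\le K$. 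Here $\phi_f(\HEG^n)\neq1$ for all $n$, and the family $\{W_n\}$ is \emph{admissible}, i.e. each letter $a_m$ occurs in only finitely many $W_n$.

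The embedding would be built as a standard map $\Theta\colon\HEG\to\HEG$, $b_j\mapsto g_j$, with each $g_j\in\phi_f(\HEG)$. First, because $\phi_f(\HEG^k)\neq1$ and, by admissibility, the projection $p_m\circ f$ of any image is a finite product of the $p_m(W_n^{\pm1})$, for each $k$ one finds a \emph{finite} word $V$ on letters $a_m$ with $m\ge k$ and $f(V)\neq 1$. Choosing such $V_j$ with pairwise disjoint, increasing supports yields nontrivial $g_j=\phi_f([V_j])$ whose minimal target subscript tends to infinity and (again by admissibility) such that each target letter lies in only finitely many of the $g_j$; consequently $\Theta$ is a well-defined homomorphism whose image lies in $\phi_f(\HEG)$.

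What remains is to show $\Theta$ is injective, i.e. that no nonempty reduced word $U$ in the $b_j$ satisfies $\Theta(U)=1$, and this is where I expect the real work to lie. The projections already organize the problem: each $p_m(\Theta(U))$ is a finite product in the free group $\HEG_m$ (only finitely many $g_j$ survive $p_m$), so by the uniqueness of reduced form in Lemma~\ref{reduced} one has $\Theta(U)=1$ if and only if $p_m(\Theta(U))=1$ for every $m$. My plan is to refine the choice of the $V_j$ so that each $g_j$ carries an \emph{uncancellable marker}: a target letter occurring exactly once in the reduced form of $g_j$ and occurring in no other $g_{j'}$. Such a marker cannot be deleted under any reduction, forcing $\Theta(U)\neq1$ whenever $U\neq1$. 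The markers would be secured by a greedy selection along the source–target incidence, whose finiteness of target-degrees is precisely admissibility, with non-n-slenderness supplying nontrivial images on arbitrarily high, fresh target letters at every stage. The hard part is exactly this final step: when the words $W_n$ have large, mutually overlapping (possibly infinite) supports, guaranteeing a genuinely private, once-occurring marker requires careful bookkeeping together with the cancellation calculus of Lemma~\ref{reduced}, and it is there that the argument must do its essential work.
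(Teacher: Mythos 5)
Your setup --- the easy direction, the reduction via Lemma \ref{continuoustoconjugation} to a standard map $\phi_f$, and the construction of an admissible family $g_j=\phi_f([V_j])$ of nontrivial elements whose supports tend to infinity --- is correct, and up to that point it is essentially the same skeleton as the paper's proof. The gap is exactly where you say it is: the injectivity of $\Theta$ is never established, and the plan you propose for it, namely refining the $V_j$ so that each $g_j$ contains a ``marker'' letter occurring \emph{exactly once} in its reduced form, cannot be carried out in general. Concretely, take the witness $f\colon a_n\mapsto a_n^2$, so that $\phi_f(\HEG^n)\ni [a_n^2]\neq 1$ for all $n$. For each fixed letter $a_m$, the exponent sum in $a_m$ is a homomorphism from $\HEG$ to $\mathbb{Z}$, and every generator image $a_n^2$ has all exponent sums even; hence every element of $\phi_f(\HEG)$ has even exponent sum in every letter, and therefore the number of occurrences of $a_m^{\pm 1}$ in its reduced form is even for every $m$. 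No element of $\phi_f(\HEG)$ has any letter occurring exactly once, so no choice of the $V_j$ can secure your markers. (The theorem is still true for this $f$ --- indeed $\phi_f$ itself is injective --- which shows the marker criterion is the wrong invariant.)

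What the paper uses instead is strictly weaker and always achievable: a \emph{staircase} condition rather than once-occurring private letters. After passing to a subsequence one arranges $\phi([a_{n+1}])\in\HEG^{m_n}$ while $p_{m_n}(\phi([a_n]))\neq 1$; thus each image uses \emph{some} letter (with arbitrary multiplicity) of subscript $<m_n$ that is used by none of the later images. Lemma \ref{obvious} is engineered for exactly this situation: it only requires $w$ to use a letter outside $Y$, not to use it once, and iterating it down the staircase shows that $p_{m_{n-1}}\circ\phi\upharpoonright \HEG_n$ is injective for every $n$. The second idea you are missing is how to pass from finite words to arbitrary elements of $\HEG$: rather than analyzing infinite cancellation (where your projection argument runs into trouble, since for infinite $U$ the projection $p_m$ can kill some markers while the surviving subword of $U$ need not remain reduced), the paper observes that the staircase gives the identity $p_{m_{n-1}}\circ\phi\circ p_n = p_{m_{n-1}}\circ\phi$, so for $[W]\neq 1$ one picks $n$ with $p_n([W])\neq 1$ and concludes $p_{m_{n-1}}(\phi([W]))=p_{m_{n-1}}(\phi(p_n([W])))\neq 1$ from injectivity on the free subgroup $\HEG_n$. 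That one identity disposes of all infinite words at once, and it is the step your ``careful bookkeeping'' would otherwise have to reproduce by hand.
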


For this we first prove a straightforward lemma regarding free groups.

\begin{lemma}\label{obvious}  Let $F(X)$ be the free group on generators $X$, let $Y \subseteq X$ and $w\in F(X)$ be a reduced word that utilizes an element of $X \setminus Y$.  Letting $t$ be a symbol such that $t\notin X$, the mapping $f: \{t\}\cup Y \rightarrow \{w\}\cup Y$ given by $f(t) = w$ and $f(y) = y$ for $y\in Y$ extends to an isomorphism $\phi: F(\{t\}\cup Y) \rightarrow \langle \{w\}\cup Y \rangle$.
\end{lemma}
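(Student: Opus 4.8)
The plan is to reduce the statement to the freeness of the generating set and then to carry out a normal-form analysis in a suitable free-product decomposition. The mapping $\phi$ is by construction the extension of $f$ to $F(\{t\}\cup Y)$, and its image is exactly $\langle \{w\}\cup Y\rangle$, so $\phi$ is automatically surjective onto the claimed subgroup. Thus the entire content of the lemma is the injectivity of $\phi$, equivalently the assertion that $\{w\}\cup Y$ is a free basis of the subgroup it generates.

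To detect cancellation I would use the free-product decomposition $F(X) = F(Y) * F(X\setminus Y)$, under which $\langle Y\rangle = F(Y)$ is the first free factor. The hypothesis that $w$ uses a letter of $X\setminus Y$ says precisely that $w\notin F(Y)$; in the normal form of $w$ as an alternating product of syllables from $F(Y)$ and $F(X\setminus Y)$ there is at least one nontrivial $F(X\setminus Y)$-syllable. Identifying the domain as $F(\{t\}\cup Y) = F(Y)*\langle t\rangle$, a nontrivial element has reduced form $g\equiv a_0 t^{n_1}a_1\cdots t^{n_r}a_r$ with each $n_i\neq 0$ and each interior $a_i\in F(Y)$ nontrivial. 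I would then examine the $F(Y)*F(X\setminus Y)$-normal form of $\phi(g)\equiv a_0 w^{n_1}a_1\cdots w^{n_r}a_r$ and show it cannot collapse to the identity: the factors $a_i$ lie entirely in $F(Y)$, so they can never cancel the $F(X\setminus Y)$-syllables contributed by the blocks $w^{\pm n_i}$, and at least one such syllable survives in each block.

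The main obstacle is controlling the cancellation at the junctions $a_{i-1}w^{n_i}a_i$, where the $F(Y)$-boundary of one block can merge with or be cancelled by a neighbouring $a_i$; the delicate case is when $w$ begins or ends with an $F(Y)$-syllable, or when $w$ is itself conjugate into $F(Y)$ (for instance $w\equiv x_0 y x_0^{-1}$). I would resolve this by passing to a cyclic reduction of $w$ in the free product, equivalently by working with the action of $F(X)$ on the Bass--Serre tree of $F(Y)*F(X\setminus Y)$: since $w\notin F(Y)$, the element $w$ either is hyperbolic or is elliptic fixing a vertex distinct from the one fixed by $F(Y)$, and in both cases junction cancellation can only affect the boundary $F(Y)$-syllables while the interior $F(X\setminus Y)$-syllables of each block persist, forcing $\phi(g)$ to have positive syllable length and hence to be nontrivial. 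As a sanity check in the finitely generated case one can instead invoke Nielsen--Schreier to see that $\langle \{w\}\cup Y\rangle$ is free, verify that its rank equals $|Y|+1$, and conclude that the $|Y|+1$ generators form a basis using the Hopfian property of finitely generated free groups; the free-product argument above is the version that also covers infinite $Y$.
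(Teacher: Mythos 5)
Your reduction to injectivity and your general setup (reduced form $g \equiv a_0t^{n_1}a_1\cdots t^{n_r}a_r$, analysis of $\phi(g) = a_0w^{n_1}a_1\cdots w^{n_r}a_r$) are fine, and you correctly flag the delicate case. The problem is that your resolution of that case is a non sequitur precisely there. Take your own example $w \equiv x_0yx_0^{-1}$ with $x_0\in X\setminus Y$, $y\in Y$. The blocks $w^{n_i}\equiv x_0y^{n_i}x_0^{-1}$ have \emph{no} interior $F(X\setminus Y)$-syllables: the $F(X\setminus Y)$-syllables $x_0^{\pm1}$ are the boundary syllables, and the interior syllable $y^{n_i}$ lies in $F(Y)$. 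So the assertion that ``junction cancellation can only affect the boundary $F(Y)$-syllables while the interior $F(X\setminus Y)$-syllables of each block persist'' is vacuous in this case, and positive syllable length of $\phi(g)$ does not follow from it. What actually makes this and the other hard cases work is a different mechanism: at a junction between blocks of \emph{opposite} sign the merged $F(Y)$-syllable is a conjugate $(\text{suffix of }w)\,a_i\,(\text{suffix of }w)^{-1}$ of the nontrivial $a_i$ and hence nontrivial, while at a junction between blocks of the \emph{same} sign the merged $F(Y)$-syllable may die, but then the flanking $F(X\setminus Y)$-syllables carry the same sign and merge without cancelling (this is where cyclic reducedness of the core enters). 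Your sketch never isolates this conjugation dichotomy, nor does it track the conjugator $u$ in $w = u\tilde{w}u^{-1}$ through the junctions $u^{-1}a_iu$, so the crucial step is asserted rather than proved. Your finite-rank fallback has a second gap: Nielsen--Schreier plus the Hopf property only applies once you know $\langle\{w\}\cup Y\rangle$ has rank exactly $|Y|+1$, and you offer no way to verify this; it cannot be deduced from $F(Y)\leq\langle\{w\}\cup Y\rangle$ (rank is not monotone under inclusion of free groups), and abelianization fails exactly when $w$ has zero exponent sum in each letter of $X\setminus Y$, again as in $w\equiv x_0yx_0^{-1}$. The rank claim is essentially the lemma itself.

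For comparison, the paper's proof implements the correct mechanism at the level of letters rather than free-product syllables: it writes $w\equiv w_0w_1w_2w_1^{-1}w_3$, where $w_0,w_3$ are the maximal $Y$-prefix and $Y$-suffix and $w_2$ is the (nonempty, cyclically reduced) cyclic reduction of the middle word, and then checks that in each of the products $wvw$, $wvw^{-1}$, $w^{-1}vw$, $ww$, $w^{-1}w^{-1}$ (with $v\in F(Y)$, and $v$ nontrivial when the signs oppose) cancellation never reaches $w_2^{\pm1}$, using exactly the two facts above: $w_3vw_3^{-1}\neq 1$ when signs oppose, and cyclic reducedness of $w_2$ when signs agree. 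Note that this decomposition handles uniformly the case that breaks your syllable mechanism: for $w\equiv x_0yx_0^{-1}$ it gives $w_0\equiv w_3\equiv E$, $w_1\equiv x_0$, $w_2\equiv y$, and the surviving core $w_2$ is allowed to lie in $F(Y)$, shielded by $w_1^{\pm1}$. If you want to salvage your free-product version, you must prove the analogous statement: after cyclic reduction in $F(Y)*F(X\setminus Y)$, the junction elements $u^{-1}a_iu$ are nontrivial and the surviving material is either the elliptic core $\tilde{w}^{n_i}$ (possibly an $F(Y)$-syllable guarded by conjugator syllables) or, in the hyperbolic case, the syllables of $\tilde{w}^{n_i}$; that case analysis is exactly the bookkeeping currently missing.
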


\begin{proof}  By freeness of $F(\{t\}\cup Y)$ we get an extension $\phi: F(\{t\}\cup Y) \rightarrow  \langle \{w\}\cup Y \rangle$, and we need only check that this is injective.  We consider elements of free groups as reduced words in the appointed generators.  Write $w \equiv w_0w_1w_2w_1^{-1}w_3$, where $w_0$ is the maximal prefix of $w$ which uses only letters in $Y^{\pm 1}$, $w_3$ is the maximal suffix of $w$ which uses only letters in $Y^{\pm 1}$, and $w_2$ is the cyclic reduction of the remaining middle word $w_1w_2w_1^{-1}$.  Since $w$ uses a letter not in $Y$, the remaining middle word $w_1w_2w_1^{-1}$ is not the empty word, and therefore $w_2$ is also nonempty.

Supposing we have a multiplication $wvw$ with $v$ nontrivial $F(Y)$, we consider the maximum extent of letter cancellation.  We have

\begin{center}
$wvw \equiv  w_0w_1w_2w_1^{-1}w_3 v w_0w_1w_2w_1^{-1}w_3$
\end{center}

\noindent and the greatest extent of letter cancellation has $w_3 v w_0$ cancelling entirely, so that $w_1^{-1}w_3 v w_0w_1$ cancels entirely, but the cancellation may go no further since $w_2$ was cyclically reduced.  Supposing we have multiplication $wvw^{-1}$ with $v$ nontrivial in $F(Y)$, we consider the maximum extent of cancellation within

\begin{center}  $wvw^{-1}\equiv w_0w_1w_2w_1^{-1}w_3 vw_3^{-1}w_1w_2^{-1}w_1^{-1}w_0^{-1}$
\end{center}

\noindent  Since $v$ is nontrivial we see that $w_3 vw_3^{-1}$ is not trivial.  Thus in both cases, the nontrivial word $w_2^{\pm 1}$ remains untouched after a maximal cancellation, and the multiplication $w^{-1}v w$ yields the same conclusion, as do products $ww$ and $w^{-1}w^{-1}$.  This is sufficient for showing that a nontrivial reduced word in $F(\{t\}\cup Y)$ maps nontrivially.
\end{proof}

\begin{proof}(of Theorem \ref{subgroupofheg})  Only the $(\Rightarrow)$ direction is nontrivial.  Suppose $G \leq \HEG$ is not n-slender and let $\phi_0: \HEG \rightarrow G$ witness this.  By Lemma \ref{continuoustoconjugation} we can conjugate both $G$ and the homomorphism and obtain a new homomorphism $\phi_1$ determined by a mapping $a_n \mapsto W_n$ where $[W_n] \in \HEG^{j_n}$ and $j_n \nearrow \infty$ which witnesses that the conjugate of $G$ is not n-slender.  It will be sufficient to find a subgroup of this conjugate which contains $\HEG$ as a subgroup, so without loss of generality we replace $G$ with this conjugate.

Since $\phi_1$ witnesses the negation of n-slenderness, we can select a sequence $V_n$ of reduced words such that $[V_n]\in \HEG^n$ and $\phi_1(V_n)\in \HEG^n \setminus \{1\}$ and we let $U_n$ be the reduced word such that $U_n \in \phi(V_n)$.  The mapping $f: a_n \mapsto U_n$ induces an endomorphism $\phi_2:\HEG \rightarrow\HEG$ such that $\phi_2(\HEG) \leq G$ and $\phi_2([a_n]) \in \HEG^n\setminus\{1\}$ for all $n\in \omega$.  Now, for each $n\in \omega$ there exists $m_n\in \omega$ such that $p_{m_n}(\phi_2([a_n]))\neq 1$.  Thus by using a subsequence we produce an endomorphism $\phi$ such that $\phi(\HEG) \leq G$, $p_{m_n}(\phi([a_n])) \neq 1$ and $\phi([a_{n+1}])\in \HEG^{m_n}$.  This last condition guarantees that $p_{m_{n-1}} \circ \phi \circ p_n = p_{m_{n-1}} \circ \phi$.  It also guarantees that given $n < j$ there exists a letter utilized in $p_{m_j}\circ\phi([a_n])$ which is not one of the generators of the free group  $\HEG_{m_n} \cap\HEG^{m_j}$.

We claim that for each $n\in \omega$ the restriction $p_{m_{n-1}}\circ\phi\upharpoonright \HEG_n$ is an injection.  Fix $n$.  We know $p_{m_{n-1}}\circ \phi([a_0])$ utilizes a letter that is not a generator in $\HEG_{m_{n-1}} \cap \HEG^{m_0}$, so by Lemma \ref{obvious} we get the isomorphism $$\langle t_0\rangle * (\HEG_{m_{n-1}} \cap \HEG^{m_0}) \simeq \langle \{p_{m_{n-1}}\circ \phi([a_0])\} \cup (\HEG_{m_{n-1}} \cap \HEG^{m_0})\rangle$$  Since the subgroup $\langle p_{m_{n-1}}\circ \phi([a_1]),\ldots,   p_{m_{n-1}}\circ \phi([a_{n-1}])\rangle$ is included in the group $\HEG_{m_{n-1}} \cap \HEG^{m_0}$, we get \emph{a fiortiori} that $$\langle t_0\rangle * \langle p_{m_{n-1}}\circ \phi([a_1]),\ldots,   p_{m_{n-1}}\circ \phi([a_{n-1}])\rangle \simeq \langle p_{m_{n-1}}\circ \phi([a_0]),\ldots,   p_{m_{n-1}}\circ \phi([a_{n-1}])\rangle$$  Continuing to argue in this manner we get that $\langle p_{m_{n-1}}\circ \phi([a_0]),\ldots,   p_{m_{n-1}}\circ \phi([a_{n-1}])\rangle$ is a free group in its listed generators, so $p_{m_{n-1}}\circ\phi\upharpoonright \HEG_n$ is injective.

Now, given $[W]\in \HEG \setminus\{1\}$ we select $n\in \omega$ such that $p_n([W])\neq 1$.  Then $p_{m_{n-1}} \circ \phi \circ p_n([W])\neq 1$ since $p_{m_{n-1}}\circ\phi\upharpoonright \HEG_n$ is injective.  Thus $$1\neq  p_{m_{n-1}} \circ \phi \circ p_n([W])= p_{m_{n-1}} \circ \phi([W])$$ so that $\phi([W])\neq 1$.  Then $\phi$ is a monomorphism and we are done.
\end{proof}

\noindent The classification in Theorem \ref{subgroupofheg} cannot be strengthened by more generally considering subgroups of residually free groups, since $\prod_{\omega}\mathbb{Z}$ is residually free, fails to be n-slender, and does not contain $\HEG$ as a subgroup.

In light of the result of \cite{Nu} we ask whether the analogous situation holds in the non-abelian case:

\begin{question}\label{big}  Does there exist a countable set of groups $\{G_n\}_{n\in \omega}$ such that a group fails to be n-slender if and only if it includes one of the $G_n$ as a subgroup?
\end{question}

\noindent As a weakening of Question \ref{big}, we ask whether the main result of \cite{Sa} holds in the non-abelian case:

\begin{question}\label{small}  Does there exist a countable set of groups $\{G_n\}_{n\in \omega}$ such that a countable group fails to be n-slender if and only if it includes one of the $G_n$ as a subgroup?
\end{question}

\noindent As has already been mentioned, it is even unknown whether there exists a countable group not containing $\mathbb{Q}$ or torsion which fails to be n-slender.

\end{section}

\end{document}